\numberwithin{equation}{section}
\newtheorem{theorem}{Theorem}[section]
\newtheorem{lemma}[theorem]{Lemma}
\newtheorem{prop}[theorem]{Proposition}
\theoremstyle{definition}
\newtheorem{remark}[theorem]{Remark}
\theoremstyle{definition}
\theoremstyle{definition}
\theoremstyle{definition}
\newtheorem{example}[theorem]{Example}
\def\dashint{\operatorname%
{\,\,\text{\bf-}\kern-.98em\DOTSI\intop\ilimits@\!\!}}
\def\\det{\text{det}}
\def\.5{\frac{1}{2}}
\newcommand{\RN}[1]{%
  \textup{\uppercase\expandafter{\romannumeral#1}}%
}
\renewcommand{\epsilon}{\varepsilon}
\newcounter{marnote}
\begin{document}


\title[Characterization of Electric Fields ]{Characterization of Electric Fields for Perfect Conductivity Problems in 3D}

\author[H.G. Li]{Haigang Li}
\address[H.G. Li]{School of Mathematical Sciences, Beijing Normal University, Laboratory of Mathematics and Complex Systems, Ministry of Education, Beijing 100875, China. }
\email{hgli@bnu.edu.cn}
\thanks{H.G. Li was partially supported by  NSFC (11571042) (11631002), Fok Ying Tung Education Foundation (151003). }

\author[F. Wang]{Fang Wang}
\address[F. Wang]{School of Mathematical Sciences, Beijing Normal University, Laboratory of Mathematics and Complex Systems, Ministry of Education, Beijing 100875, China.}
\email{wf@mail.bnu.edu.cn}

\author[L.J. Xu]{Longjuan Xu}
\address[L.J. Xu]{School of Mathematical Sciences, Beijing Normal University, Laboratory of Mathematics and Complex Systems, Ministry of Education, Beijing 100875, China.}
\email{ljxu@mail.bnu.edu.cn. Corresponding author.}

\maketitle

\begin{abstract}
In composite materials, the inclusions are frequently spaced very closely. The electric field concentrated in the narrow regions between two adjacent perfectly conducting inclusions will always become arbitrarily large. In this paper, we establish an asymptotic formula of the electric field in the zone between two spherical inclusions with different radii in three dimensions. An explicit blowup factor relying on radii is obtained, which also involves the digamma function and Euler-Mascheroni constant, and so the role of inclusions' radii played in such blowup analysis is identified.
\end{abstract}

\section{Introduction and main results}

In this paper, we investigate the blowup phenomena that occur in composite materials consisting of a finite conductivity matrix and perfectly conducting inhomogeneities close to touching and derive the asymptotic formula of the electric field in the narrow region between two perfectly conducting inclusions in three dimensions. For two spherical inclusions with different radii, we obtain an explicit blowup factor involving the digamma function and Euler-Mascheroni constant and reveal the role of radii of inclusions played in such blowup analysis.

This problem was initiated by Babuska et al \cite{BASL} in the study of fiber-reinforced composite material, where one has to estimate the magnitude of local fields in the zone of high stress field concentration. It can be modeled by a class of divergence form second-order elliptic equations $\nabla\big(a(x)\nabla u\big)=0$ with piecewise constant coefficients, given by $a(x)=k$ for $x$ inside the inclusions, and $a(x)=1$ in the matrix. This model has attracted considerable attention because it can describe various physical phenomena, including electrical conductivity, thermal conduction, anti-plane elasticity, and even flow in porous media. For the sake of definiteness, this paper uses the electrical conductivity language, where $\nabla u$ describes an electric field.

There have been much important progress made on the gradient estimates of the solutions to such elliptic equations $\nabla\big(a(x)\nabla u\big)=0$ since the numerical analysis was studied in \cite{BASL}. For the case when the conductivity stays away from 0 and $\infty$, Bonnetier and Vogelius \cite{BV} first proved that $|\nabla u|$ is bounded for two touching disks $D_{1}$ and $D_{2}$ in two dimensions. Moreover, they pointed out that the bound depends on the value of conductivity. Li and Vogelius \cite{LV} extended the result to general divergence form second-order elliptic equations with piecewise smooth coefficients and they proved that $|\nabla u|$ remains uniformly bounded with respect to the distance between inclusions of arbitrary smooth shape in all dimensions. Li and Nirenberg \cite{LN} extended this result to elliptic systems including systems of linear elasticity, which is assumed in \cite{BASL}. The estimates in \cite{LN} and \cite{LV} depend on the ellipticity of the coefficients. If ellipticity constants are allowed to be deteriorate, the situation is very different. It was shown in various papers, see for example Budiansky and Carrier \cite{BC} and Markenscoff \cite{M}, that when $k=+\infty$ the $L^{\infty}$-norm of $|\nabla u|$ generally becomes unbounded as the distance $\varepsilon$ between inclusions tends to zero. The rate at which the $L^{\infty}$-norm of the gradient of a special solution blows up was shown in \cite{BC} to be $\varepsilon^{-1/2}$ in two dimensions.

In this paper, we consider the perfect conductivity problem, where $k=+\infty$. It was proved by Ammari, Kang and Lim in \cite{AKL} and Ammari et al. in \cite{AKLLL} that when $D_{1}$ and $D_{2}$ are disks in $\mathbb R^{2}$, the blowup rate of $|\nabla u|$ is $\varepsilon^{-1/2}$ as $\varepsilon$ goes to zero; with the lower bound given in \cite{AKL} and the upper bound given in \cite{AKLLL}. This result was extended by Yun \cite{Y1,Y2} and Bao, Li and Yin \cite{BLY1} to strictly convex subdomains $D_{1}$ and $D_{2}$ in $\mathbb R^{2}$. In three dimensions and higher dimensions, the blowup rate of $|\nabla u|$ turns out to be $|\varepsilon \log\varepsilon|^{-1}$ and $\varepsilon^{-1}$, respectively; see \cite{BLY1,LY1}. For related works on elliptic equations and systems arising from the study of composite materials, see \cite{ABTV,ADKL,AGKL,AKLLZ,BLY2,BLL,BLL2,BJL,BT1,BT2,BCN,DL,DZ,GN,K1,K,KY,LL,LLBY,LX,LY1,LY2,MNT,Y3,Y4} and the references therein.

The results mentioned above are estimates of $|\nabla u|$ from above and below, namely,
\begin{equation}\label{lower-upper}
\frac{C_{1}}{\rho(\varepsilon)}\leq|\nabla u|\leq\frac{C_{2}}{\rho(\varepsilon)}+C_{3}
\end{equation}
for some positive constants $C_{1}, C_{2}$ and $C_{3}$, where $\rho(\varepsilon)=
\sqrt{\varepsilon}$, $(\varepsilon|\log\varepsilon|)$, $\varepsilon$,  if $n=2$, $n=3$, $n\geq4$, respectively,
and shows that the electric filed may blow up in the narrow regions between inclusions.

The interest of this paper lies in further establishing the asymptotic formula of $|\nabla u|$ in the narrow zone of electric field concentration. In dimension two, Kang, Lim and Yun \cite{KLY1} obtained a complete characterization of the singular behavior of $\nabla u$ when inclusions are disks. Let $D_{1}$ and $D_{2}$ be disks in $\mathbb R^{2}$ of radii $r_{1}$ and $r_{2}$, respectively, and let $R_{j}$ be the reflection with respect to $\partial D_{i}$, $i=1,2$. Then the combined reflections $R_{1}R_{2}$ and $R_{2}R_{1}$ have unique fixed points, say $\textbf{f}_{1}\in D_{1}$ and $\textbf{f}_{2}\in D_{2}$. Let
\begin{equation}\label{h 2d}
h(\textbf{x})=\frac{1}{2\pi}\big(\log|\textbf{x}-\textbf{f}_{1}|-\log|\textbf{x}-\textbf{f}_{2}|\big).
\end{equation}
It has been proved that the solution $u$ to \eqref{instruction} can be expressed as
\begin{equation}\label{expre u}
u(\textbf{x})=\frac{4\pi r_{1}r_{2}}{r_{1}+r_{2}}(\textbf{n}\cdot\nabla H)(\textbf{c})h(\textbf{x})+g(\textbf{x}),\quad\textbf{x}\in\mathbb R^{2}\setminus(D_{1}\cap D_{2}),
\end{equation}
where $\textbf{c}$ is the middle point of the shortest line segment connecting $\partial D_{1}$ and $\partial D_{2}$, $\textbf{n}$ is the unit vector in the direction of $\textbf{f}_{2}-\textbf{f}_{1}$, and $|\nabla g(\textbf{x})|$ is bounded independently of $\varepsilon$ on any bounded subset of $\mathbb R^{2}\setminus(D_{1}\cap D_{2})$. So the singular behavior of $\nabla u$ is completely characterized by $\nabla h$. Ammari et al. \cite{ACKLY} extended the characterization \eqref{expre u} to the case when inclusions are strictly convex domains in $\mathbb R^{2}$ by using disks osculating to convex domains. It is worth mentioning that stress concentration factor was derived by Gorb in \cite{G2015}, and by Gorb and Novikov  in \cite{GN} for the $p$-Laplacian.

Compared with the known results in dimension two, the situation becomes more complicated in dimension three. Although the singular function $h(\textbf{x})$ can be founded, it is of form of series, see \eqref{def h} below, rather than a function like \eqref{h 2d}. Recently, for a special case that two inclusions with the same radii $r_{1}=r_{2}$, an asymptotic formula of $|\nabla u|$ was obtained by Kang, Lim and Yun in \cite{KLY2}, where the symmetry of the domain makes the computation easy to handle. However, for the general case that $r_{1}\neq r_{2}$, the symmetry is broken and the computation becomes involved. It is not obvious to generalize the asymptotic expression of $|\nabla u|$. In this paper, we mainly overcome this difficulty and obtain a blowup factor making its dependence on the radii explicit, which maybe is useful from the engineering point view. We would like to point out that Lim and Yun \cite{LY1} obtained the upper and lower bounds of $|\nabla u|$ for two balls with different radii by image charge method.  In this paper, we improve that and provide a complete expression of $\nabla u$.

In order to describe the problem and results, we first fix our domains and notations. Let
\begin{equation*}
D_{1}=\mathfrak{B}_{1}=B_{r_{1}}(\textbf{c}_{1}), \quad D_{2}=\mathfrak{B}_{2}=B_{r_{2}}(\textbf{c}_{2})
\end{equation*}
be two balls in $\mathbb R^{3}$, with $2\varepsilon$ apart, where
\begin{equation*}
{\textbf{c}}_{1}=(r_{1}+\varepsilon, 0, 0),\quad \textbf{c}_{2}=(-r_{2}-\varepsilon, 0, 0).
\end{equation*}

Suppose that the conductivity of the inclusions degenerates to $+\infty$; in other words, inclusions are perfect conductors. Consider the following perfect conductivity problem \cite{KLY2}:
\begin{align}\label{instruction}
\begin{cases}
\Delta u=0 &\mbox{in}~~\mathbb{R}^{3}\setminus\overline{D_{1}\bigcup D_{2}},\\
u=C_{i}~(\mbox{constant}) &\mbox{on}~~\partial D_{i}, i=1,2,\\
u(\textbf{x})-H(\textbf{x})=O(|\textbf{x}|^{-2}) &\mbox{as}~~|\textbf{x}|\rightarrow \infty,\\
\int_{\partial D_{i}}\frac{\partial u}{\partial \nu^{i}}\ d\sigma=0 &\mbox{for}~~i=1, 2,
\end{cases}
\end{align}
where $H$ is a given harmonic function in $\mathbb{R}^{3}$ so that $-\nabla H$ is the background electric field in the absence of the inclusions. Here and throughout this paper, $\nu^{i}$ is the outward unit normal vector to $\partial D_{i}$, $i=1,2$.

Let $\rho(\textbf{x})=\sqrt{x_{2}^{2}+x_{3}^{2}}$ and denote
$$r_{\max}:=\max\{r_{1},r_{2}\},\quad r_{\min}:=\min\{r_{1},r_{2}\}.$$
Define the blowup factor
\begin{equation}\label{def psi}
\Psi(r_{1}, r_{2}):=
\frac{\psi\big(\frac{r_{\max}}{r_{1}+r_{2}}\big)C^{H}_{\min}
+\psi\big(\frac{r_{\min}}{r_{1}+r_{2}}\big)C^{H}_{\max}}
{\psi\big(\frac{r_{2}}{r_{1}+r_{2}}\big)+\psi\big(\frac{r_{1}}{r_{1}+r_{2}}\big)},
\end{equation}
where $\psi=\psi_{0}+\gamma$, $\psi_{0}$ is digamma function, the logarithmic derivative of the gamma function, $\gamma$ is Euler-Mascheroni constant (see Remark \ref{rek thm1} for more details about $\psi_{0}$ and $\gamma$),
\begin{align}\label{CH min}
C^{H}_{\min}&=
\sum_{k=0}^{\infty}\frac{1}{k+\frac{r_{\min}}{r_{1}+r_{2}}}H\left(\frac{r_{1}r_{2}}{k(r_{1}+r_{2})+r_{\min}}, 0, 0\right)\nonumber\\
&\quad-\sum_{k=0}^{\infty}\frac{1}{k+1}H\left(-\frac{r_{1}r_{2}}{(k+1)(r_{1}+r_{2})}, 0, 0\right),
\end{align}
and
\begin{align}\label{CH max}
C^{H}_{\max}&=
\sum_{k=0}^{\infty}\frac{1}{k+1}H\left(\frac{r_{1}r_{2}}{(k+1)(r_{1}+r_{2})}, 0, 0\right)\nonumber\\
&\quad-\sum_{k=0}^{\infty}\frac{1}{k+\frac{r_{\max}}{r_{1}+r_{2}}}H\left(-\frac{r_{1}r_{2}}{k(r_{1}+r_{2})+r_{\max}}, 0, 0\right).
\end{align}
Then we have

\begin{theorem}\label{theorem1}
Suppose that $r_{1}, r_{2}\gg\varepsilon$. Then for $x\in\mathbb{R}^{3}\setminus\overline{\mathfrak{B}_{1}\bigcup \mathfrak{B}_{2}}$, if $\rho({\bf{x}})\leq r_{\max}|\log\frac{\varepsilon}{r_{\max}}|^{-2}$, then as $\varepsilon\rightarrow0$, we have
\begin{align}\label{nabla u}
\nabla u(\textbf{x})
=\frac{1}{|\log\varepsilon|}
\frac{\Psi(r_{1}, r_{2})+O\big(\sqrt{\varepsilon}\big|\log\varepsilon\big|\big)}{\varepsilon+\frac{1}{4}\left(\frac{1}{r_{1}}+\frac{1}{r_{2}}\right)\rho({\bf{x}})^{2}}
\left({\bf{n}}
+\Big(\frac{r_{1}+r_{2}}{r_{\min}}\Big)^{2}\eta({\bf{x}})\right)+\nabla g(\bf{x}),
\end{align}
where $\textbf{n}=(1, 0, 0)$, the blowup factor $\Psi(r_{1}, r_{2})$ is given by \eqref{def psi}, $|\nabla g|$ is bounded on any bounded region in $\mathbb{R}^{3}\setminus \overline{\mathfrak{B}_{1}\cup \mathfrak{B}_{2}}$ regardless of $r_{1}, r_{2}$ and $\varepsilon$ ($g$ is defined by \eqref{g}),  and
$$|\eta(\textbf{x})|\leq C|\log\varepsilon|^{-1}$$
for some positive constant $C>0$ independent of $\varepsilon$, $r_{1}$, and $r_{2}$.
\end{theorem}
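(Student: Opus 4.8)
The plan is to reduce the perfect-conductivity problem \eqref{instruction} to an explicit boundary-value problem via bispherical coordinates, exploit the linearity of the problem, and then extract the singular part by a careful asymptotic analysis of the resulting series as $\varepsilon\to0$. First I would decompose the solution $u$ to \eqref{instruction} as $u = q_1 v_1 + q_2 v_2 + v_0 + H$, where $v_i$ ($i=1,2$) solves the problem with $H\equiv0$ and boundary data $\delta_{ij}$ on $\partial D_j$, $v_0$ is the harmonic corrector ensuring $v_0+H$ is constant on each $\partial D_i$, and the constants $q_1,q_2$ (the ``free charges'') are determined by the two flux conditions $\int_{\partial D_i}\partial_{\nu^i}u\,d\sigma = 0$. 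The function $h(\textbf{x})$ appearing in \eqref{def h} is, up to normalization, the singular function built from the two families of image charges obtained by iterating the reflections (inversions) $R_1,R_2$ across $\partial\mathfrak{B}_1,\partial\mathfrak{B}_2$; its image points lie on the $x_1$-axis at the locations $\pm r_1r_2/(k(r_1+r_2)+r_{\min})$ etc. appearing in \eqref{CH min}--\eqref{CH max}.

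The core computation is the evaluation of $q_1-q_2$ (or the appropriate combination driving the blowup), which by Green's identity reduces to ratios of integrals of the form $\int_{\partial D_i}\partial_{\nu^i}v_j\,d\sigma$ and $\int_{\partial D_i}\partial_{\nu^i}(v_0+H)\,d\sigma$. The plan is to compute each such quantity in bispherical coordinates $(\xi,\theta,\varphi)$ adapted to the two spheres, where $\partial D_1 = \{\xi=\xi_1\}$ and $\partial D_2=\{\xi=-\xi_2\}$ with $\xi_i \sim \sqrt{2\varepsilon/r_i}$ small. In these coordinates harmonic functions expand in Gegenbauer/Legendre series, the normal derivatives become explicit sums, and the flux integrals collapse (by orthogonality) to single series in $n$. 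The key asymptotic step is to recognize that sums like $\sum_{n\ge0} e^{-(2n+1)\xi}$-type expressions, when $\xi\to0$, produce $\log(1/\xi)\sim \tfrac12|\log\varepsilon|$ together with a convergent remainder that, after passing to the limit termwise, assembles exactly into the digamma values $\psi\big(\tfrac{r_i}{r_1+r_2}\big)$ via the classical identity $\psi_0(z)+\gamma = \sum_{k\ge0}\big(\tfrac1{k+1}-\tfrac1{k+z}\big)$. Matching the numerator and denominator of $\Psi(r_1,r_2)$ to $C^H_{\min},C^H_{\max}$ and to $\psi\big(\tfrac{r_2}{r_1+r_2}\big)+\psi\big(\tfrac{r_1}{r_1+r_2}\big)$ is then a matter of bookkeeping between the two image families (one converging to a point governed by $r_{\min}$, the other by $r_{\max}$), and the $O(\sqrt\varepsilon|\log\varepsilon|)$ error comes from (i) replacing $\xi_i$ by its leading term $\sqrt{2\varepsilon/r_i}$ and (ii) truncating the slowly converging series at an appropriate index.

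Once the scalar factor is pinned down, the spatial profile in \eqref{nabla u} follows from the behavior of $\nabla h$ near the ``neck'': writing $h$ as the difference of the two image-charge potentials, a Taylor expansion of each Newtonian kernel about the origin in the region $\rho(\textbf{x}) \le r_{\max}|\log(\varepsilon/r_{\max})|^{-2}$ shows that the dominant contribution is a dipole-type term whose denominator is, to leading order, $\varepsilon + \tfrac14\big(\tfrac1{r_1}+\tfrac1{r_2}\big)\rho(\textbf{x})^2$ — this is the standard quadratic approximation of the distance between the two sphere caps — with direction $\textbf{n}=(1,0,0)$ plus a transverse correction of relative size $|\log\varepsilon|^{-1}$, which is the $\eta(\textbf{x})$ term; the prefactor $\big((r_1+r_2)/r_{\min}\big)^2$ records the asymmetry of the two caps. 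The remaining pieces $q_1v_1+q_2v_2$ minus their singular part, together with $v_0+H$, are harmonic with data bounded independently of $\varepsilon$ and of the radii on compact sets, hence contribute to $\nabla g$ with the claimed uniform bound; here I would invoke the earlier energy/De Giorgi--Nash--Moser estimates cited in the introduction (the regularity behind \eqref{lower-upper}) rather than reprove them.

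I expect the main obstacle to be the asymmetric case $r_1\ne r_2$: the two image sequences no longer coincide and the bispherical parameter is different on the two spheres, so one must carefully track which family of poles contributes $\psi\big(\tfrac{r_{\min}}{r_1+r_2}\big)$ versus $\psi\big(\tfrac{r_{\max}}{r_1+r_2}\big)$, and ensure that the cross terms in the flux integrals (which vanish by symmetry when $r_1=r_2$) are correctly absorbed into the error term rather than the leading coefficient. Controlling the uniformity of all constants in $r_1,r_2$ — as opposed to just in $\varepsilon$ — throughout the series manipulations, and in particular verifying that the $O(\sqrt\varepsilon|\log\varepsilon|)$ remainder does not secretly depend on $r_{\min}^{-1}$ in a way that degrades as one radius shrinks, will require the hypothesis $r_1,r_2\gg\varepsilon$ to be used quantitatively at each truncation step.
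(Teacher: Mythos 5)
Your overall skeleton is viable and is essentially the alternative route the paper itself acknowledges in Remark \ref{rek thm1}: compute the capacitance-type quantities in bispherical coordinates (as in \cite{LY,L}), identify the digamma values through $\psi_{0}(z)+\gamma=\sum_{k\geq0}\big(\tfrac{1}{k+1}-\tfrac{1}{k+z}\big)$, and carry the singularity by the potential difference times the singular function $h$ of \eqref{def h}, with $\nabla g$ handled by known regularity arguments. Two points, however, are genuine gaps rather than omitted routine work. First, and most seriously, the spatial profile in \eqref{nabla u} cannot be obtained by ``a Taylor expansion of each Newtonian kernel about the origin'' yielding a ``dipole-type term'': no single image kernel $\Gamma(\textbf{x}-\textbf{p}_{i,j})$ produces the denominator $\varepsilon+\tfrac14\big(\tfrac1{r_{1}}+\tfrac1{r_{2}}\big)\rho(\textbf{x})^{2}$, and the image charges are not at the origin but accumulate at the fixed points $\textbf{p}_{1},\textbf{p}_{2}$, at distance $\simeq 2\sqrt{\varepsilon r_{1}r_{2}/(r_{1}+r_{2})}$ from the neck, with total weight $\sum_{k}q_{i,2k}\simeq \tfrac12\tfrac{r_{i}}{r_{1}+r_{2}}|\log\varepsilon|$ which diverges. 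The correct mechanism is collective: one must compare the full image-charge sums with integrals of the type $\int_{\mathscr{P}_{1}}^{1}t\,(t^{2}+\rho^{2})^{-3/2}(t^{2}-\mathscr{P}_{1}^{2})^{-1/2}\,dt\simeq(\mathscr{P}_{1}^{2}+\rho^{2})^{-1}$, split the sums over the ranges $k\leq N_{0}(\delta)$, $N_{0}(\delta)\leq k\leq N_{1}(\delta)$, $k\geq N_{1}(\delta)$, and control $q_{1,2k}$ and the gaps $\mathscr{P}_{1,2k}-\mathscr{P}_{1,2k+2}$ finely enough that the errors are uniform in $r=r_{2}/r_{1}$; this is exactly Propositions \ref{prop h}, Lemmas \ref{lem rho v}--\ref{lem x1 v} and Lemma \ref{lem finer p q} (proved in the Appendix), and it is where the factor $\big((r_{1}+r_{2})/r_{\min}\big)^{2}$ in front of $\eta(\textbf{x})$ and the bound $|\eta|\leq C|\log\varepsilon|^{-1}$ with constants independent of $r_{1},r_{2}$ are actually earned. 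Your sketch leaves this technical core unproved, and the uniformity issue you flag at the end is concentrated precisely here, not only in the truncation of the charge series.

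Second, for a general harmonic $H$ the flux computation by bispherical orthogonality does not by itself deliver $C^{H}_{\min}$ and $C^{H}_{\max}$ of \eqref{CH min}--\eqref{CH max}, which are sums of point evaluations of $H$ at the image locations. One needs the Green's-representation identity behind \eqref{u_{1}-u_{2}}, namely $u|_{\partial\mathfrak{B}_{1}}-u|_{\partial\mathfrak{B}_{2}}=\tfrac{Q_{2}}{M}\sum_{j}(-1)^{j}q_{1,j}H(\textbf{p}_{1,j})+\tfrac{Q_{1}}{M}\sum_{j}(-1)^{j+1}q_{2,j}H(\textbf{p}_{2,j})$, together with a quantitative replacement of $q_{i,j}$ and $\textbf{p}_{i,j}$ by their $\varepsilon\to0$ limits $\Theta_{i,j}(r)$ and $\pm r_{1}r_{2}/(k(r_{1}+r_{2})+r_{\min})$ etc., with an $O(\sqrt{\varepsilon}|\log\varepsilon|)$ error (Proposition \ref{prop u}); expanding an arbitrary $H$ in bispherical harmonics and resumming the resulting double series into these point values is exactly the step your ``bookkeeping'' glosses over. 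With these two blocks supplied (or replaced by the corresponding computations of \cite{LY}), your decomposition and the digamma identification would indeed reproduce \eqref{def psi} and the theorem.
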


\begin{remark}\label{rek thm1}
The digamma function
$$\psi_{0}(x)=\int^{\infty}_{0}\bigg(\frac{e^{-t}}{t}-\frac{e^{-xt}}{1-e^{-t}}\bigg)\ dt.$$
Especially, if $r_{1}=r_{2}=r$, then
$$\psi(\frac{r_{1}}{r_{1}+r_{2}})=\psi(\frac{r_{2}}{r_{1}+r_{2}})=\psi(\frac{1}{2})=\psi_{0}\big(\frac{1}{2}\big)+\gamma=-2\log2,$$ where $\gamma=\lim\limits_{m\rightarrow\infty}\left(\sum_{k=1}^{m}\frac{1}{k}-\log m\right)$ is the Euler-Mascheroni constant, which
is an irrational number, $0.577215\cdots$. At this moment,
\begin{align*}
C_{\min}^{H}&=\sum_{k=0}^{\infty}\frac{1}{k+\frac{1}{2}}H\left(\frac{r}{2k+1},0, 0\right)-\sum_{k=0}^{\infty}\frac{1}{k+1}H\left(-\frac{r}{2(k+1)}, 0,0\right),\\
C^{H}_{\max}&=
\sum_{k=0}^{\infty}\frac{1}{k+1}H\left(\frac{r}{2(k+1)}, 0, 0\right)-\sum_{k=0}^{\infty}\frac{1}{k+\frac{1}{2}}H\left(-\frac{r}{2k+1}, 0, 0\right).
\end{align*}
Thus,
\begin{align*}
\Psi(1, 1)&=\frac{1}{2}\big(C^{H}_{\min}+C^{H}_{\max}\big)\\
&=\sum_{k=0}^{\infty}\frac{1}{2k+2}\left(H\Big(\frac{r}{2k+2}, 0,0\Big)-H\Big(-\frac{r}{2k+2}, 0, 0\Big)\right)\\
&\quad+\sum_{k=0}^{\infty}\frac{1}{2k+1}\left(H\left(\frac{r}{2k+1}, 0, 0\right)-H\Big(-\frac{r}{2k+1},0, 0\Big)\right).
\end{align*}
It is exactly $\frac{1}{2}C_{H}$ defined by (1.17) in \cite{KLY2}. So that, we have the main conclusion of \cite{KLY2},
\begin{equation*}
\nabla u(\textbf{x})=\Big(C_{H}+O\big(\sqrt{\varepsilon}\big|\log\varepsilon\big|\big)\Big)\frac{{\bf{n}}+\eta(\textbf{x})}{|\log \varepsilon|(2\varepsilon+\frac{1}{r}\rho(\textbf{x})^{2})}+\nabla g(\textbf{x}).
\end{equation*}
There is a typo in (1.12) in \cite{KLY2} that $r\rho(\textbf{x})^{2}$ in the denominator should be $\frac{1}{r}\rho(\textbf{x})^{2}$.

We would like to thank Mikyoung Lim for informing us the work \cite{LY} after we finished our draft. In \cite{LY}, they mainly use the bispherical coordinate system and the Euler-Maclaurin formula motivated by the physical intuition to obtain the quantity $\Psi(r_{1},r_{2})$ as well. However, this paper is mathematically along the line of \cite{KLY2,LY1} and completely improves the results in \cite{KLY2} to more general case $r_{1}\neq r_{2}$.
\end{remark}

From \eqref{nabla u}, we can infer that a high concentration of extreme electric filed occurs when $\rho({\bf{x}})=0$; that is, when ${\bf{x}}$ is on the line segment connecting two closest points on the two spheres, we have
$$\nabla u\simeq\frac{\Psi(r_{1}, r_{2})}{\varepsilon|\log\varepsilon|}{\bf{n}}.$$
From this, the occurrence of the gradient blowup depends on the behavior of $\Psi(r_{1}, r_{2})$.  The explicit formula of the blowup factor $\Psi(r_{1}, r_{2})$ expressed by \eqref{def psi} is the main contribution of this paper. To identify its role, let us see the following examples.

\begin{figure}[ht]
\begin{minipage}[t]{0.45\textwidth}
  {
  \includegraphics[width=6.5cm]{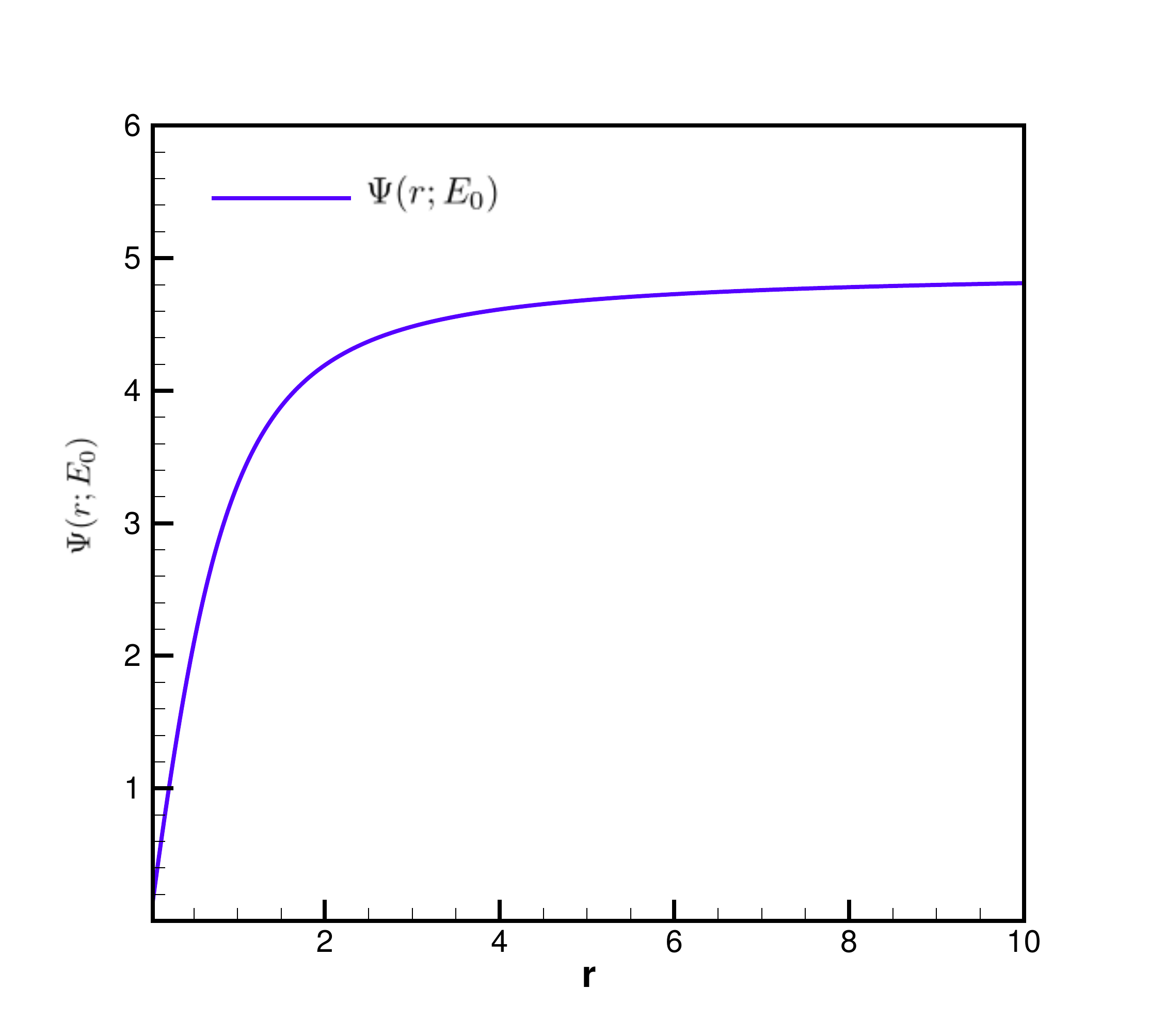}
 \caption{Graph of $\Psi(r;E_{0})$.}}\label{fig1}
\end{minipage}
\quad
\begin{minipage}[t]{0.45\textwidth}
  {
  \includegraphics[width=6.5cm]{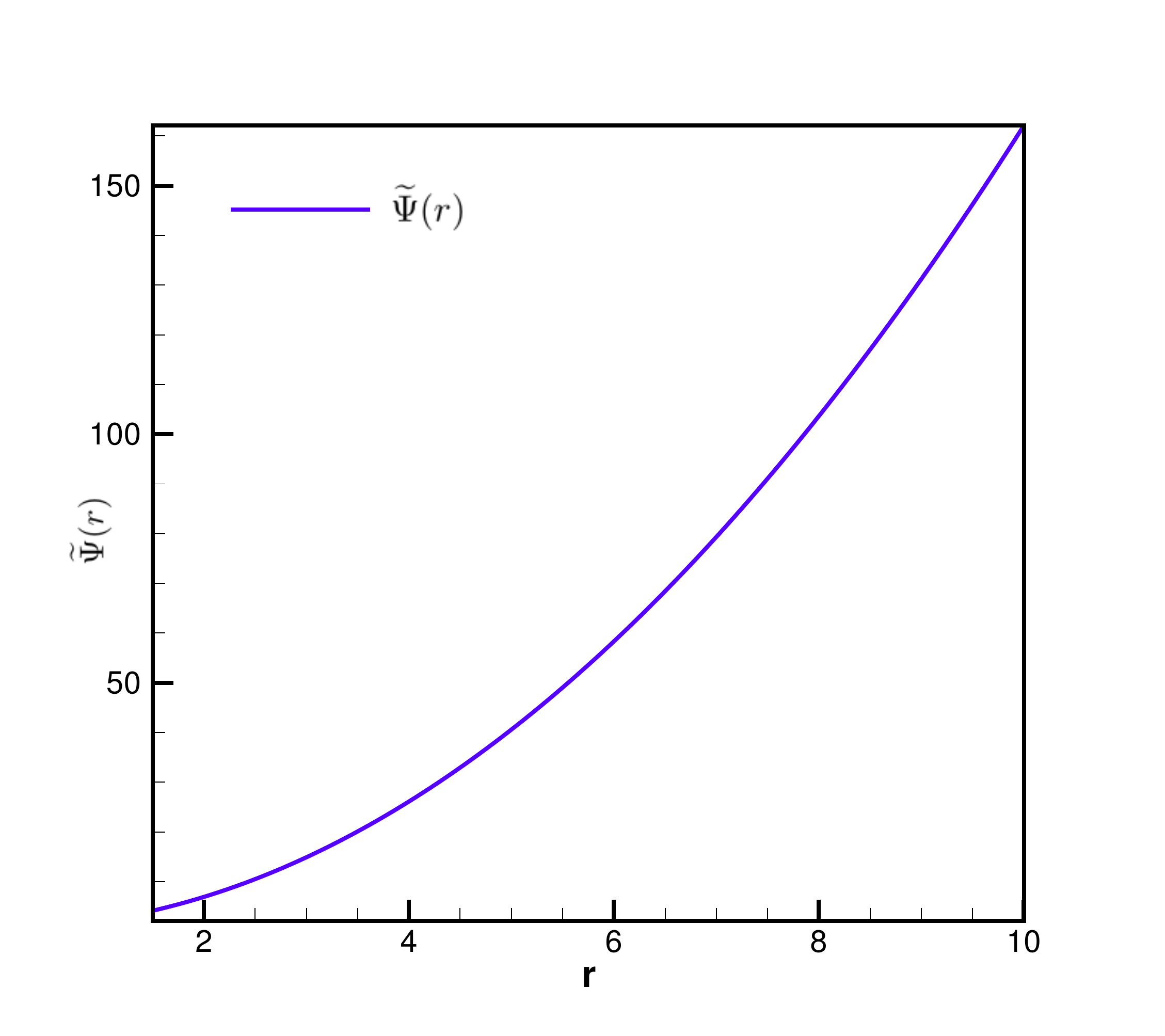}
 \caption{Graph of $\widetilde\Psi(r)$.}}\label{fig2}
\end{minipage}
\end{figure}
\begin{example}\label{example}
First, if $H=E_{0}x_{2}$ for $E_{0}>0$, then it is easy to see that $C_{\min}^{H}=C_{\max}^{H}=0$, so $\Psi(r_{1},r_{2})=0$. This means that there is no blowup occurring. Now we assume that $H=E_{0}x_{1}$, $E_{0}>0$. Without loss of generality, we assume $r_{1}=1$, and $r_{2}=r$. Then we have the following two cases:
(1)
If $r\leq1$, then $r_{\max}=1$ and $r_{\min}=r$. Thus
\begin{align*}
C^{H}_{\min}&=\sum_{k=0}^{\infty}\frac{E_{0}r}{(k+1)^{2}(r+1)}+\sum_{k=0}^{\infty}\frac{E_{0}r(1+r)}{\Big(k(r+1)+r\Big)^{2}}\nonumber\\
&=\frac{E_{0}r}{1+r}\left(\sum_{k=1}^{\infty}\frac{1}{k^{2}}+\sum_{k=1}^{\infty}\frac{1}{\big(k-\frac{1}{1+r})^{2}}\right)\nonumber\\
&=\frac{E_{0}r}{1+r}\left(\frac{\pi^{2}}{6}+\psi'\Big(\frac{r}{1+r}\Big)\right),
\end{align*}
and
\begin{align*}
C^{H}_{\max}=\frac{E_{0}r}{1+r}\left(\frac{\pi^{2}}{6}+\psi'\Big(\frac{1}{1+r}\Big)\right),
\end{align*}
where $\psi'$ is the first derivative of $\psi$.
(2)
If $r>1$, then
\begin{align*}
C^{H}_{\min}
=\frac{E_{0}r}{1+r}\left(\frac{\pi^{2}}{6}+\psi'\Big(\frac{1}{1+r}\Big)\right),\quad
C^{H}_{\max}=\frac{E_{0}r}{1+r}\left(\frac{\pi^{2}}{6}+\psi'\Big(\frac{r}{1+r}\Big)\right).
\end{align*}
It is easy to see that $C^{H}_{\min}, C^{H}_{\max}\neq0$.

Setting $\Psi(r;E_{0}):=\Psi(1,r)$, we have
\begin{align*}
\Psi(r;E_{0})&=\frac{E_{0}r}{1+r}\left(\frac{\pi^{2}}{6}+\frac{\psi\big(\frac{1}{1+r}\big)\psi'\big(\frac{r}{1+r}\big)+\psi\big(\frac{r}{1+r}\big)\psi'\big(\frac{1}{1+r}\big)}
{\psi\big(\frac{1}{1+r}\big)+\psi\big(\frac{r}{1+r}\big)}\right),
\end{align*}
which is strictly positive. This implies $|\nabla u|$ blows up for sufficiently small $\varepsilon$. For fixed $E_{0}$, one can see from Figure \ref{fig1} that $\Psi(r;E_{0})$ is increasing with respect to $r$. On the other hand, $\Psi(r;E_{0})$ is also increasing with respect to $E_{0}\in(0,\infty)$, for fixed $r$.
\end{example}

\begin{example}
Assume that $H=x_{1}^{3}-3x_{1}x_{2}^{2}$, $r_{1}=1$, and $r_{2}=r$. In this case, we denote $\widetilde\Psi(r):=\Psi(1,r)$ and get
\begin{align*}
\widetilde\Psi(r)&=\left(\frac{r}{1+r}\right)^{3}\left(\frac{\pi^{4}}{90}+\frac{1}{6}\frac{\psi\big(\frac{1}{1+r}\big)\psi^{(3)}\big(\frac{r}{1+r}\big)+\psi\big(\frac{r}{1+r}\big)\psi^{(3)}\big(\frac{1}{1+r}\big)}
{\psi\big(\frac{1}{1+r}\big)+\psi\big(\frac{r}{1+r}\big)}\right)>0,
\end{align*}
where $\psi^{(3)}$ is the third derivative of $\psi$. Therefore, $|\nabla u|$ blows up as $\varepsilon\rightarrow0$. Moreover, $\widetilde\Psi(r)$ is increasing with respect to $r$ (see Figure 2).
\end{example}

The remainder of this paper is organized as follows. In section \ref{pf thm}, we give the outline of the proof of Theorem \ref{theorem1} and reduce the proof of Theorem \ref{theorem1} to establishing the asymptotic formulae of $Q_{1}, Q_{2}, M$, the singular function $h(\textbf{x})$, and $u|_{\partial \mathfrak{B}_{1}}-u|_{\partial \mathfrak{B}_{2}}$. In Section \ref{sec pf prop Q}, we deal with the asymptotic formulae of $Q_{1}, Q_{2}$, and $M$ by exploring several properties of the sequences ${\textbf{p}_{i,j}}$ and ${q_{i,j}}$. In section \ref{sec pf prop h} we are devoted to the proof of Proposition \ref{prop h}, which characterizes the asymptotic behavior of $\nabla h(\textbf{x})$. The asymptotic formula of $u|_{\partial \mathfrak{B}_{1}}-u|_{\partial \mathfrak{B}_{2}}$ is given in Section \ref{sec pf prop u}.

\section{Proof of Theorem \ref{theorem1}}\label{pf thm}

In this section, we are devoted to proving Theorem \ref{theorem1}. We first introduce a singular function $h(\textbf{x})$ and establish its asymptotic formula by making use of our improvement on $Q_{1}, Q_{2}$, and $M$. Then we further investigate the asymptotic formula of $\nabla h(\textbf{x})$, and obtain the blowup factor $\Psi(r_{1},r_{2})$. We follow the notations in \cite{LY1}.

The main ingredient to prove Theorem \ref{theorem1} is the singular function $h$, first introduced in \cite{Y1}, which is the solution to
\begin{equation}\label{h}
\begin{cases}
\Delta h=0, &\mbox{in}~ \mathbb{R}^{3}\setminus \overline{\mathfrak{B}_{1}\cup \mathfrak{B}_{2}},\\
h=\mbox{constant}, &\mbox{on}~\partial \mathfrak{B}_{i}, i=1, 2,\\
\int _{\partial D_{i}}\frac{\partial h}{\partial\nu^{i}}\ d\sigma=(-1)^{i+1}, &i=1, 2,\\
h=O(|\textbf{x}|^{-2}), &\mbox{as}~|\textbf{x}|\rightarrow\infty.
\end{cases}
\end{equation}
The existence and uniqueness of the solution can be referred to \cite{ACKLY,Y1}. We emphasize that the constant values of $h$ on $\partial \mathfrak{B}_{1}$ and on $\partial \mathfrak{B}_{2}$ are different. So that $\nabla h$ becomes arbitrary large if $\varepsilon$ goes to zero. Define the function $g$ by
\begin{equation}\label{g}
u(\textbf{x})=\frac{u|_{\partial \mathfrak{B}_{2}}-u|_{\partial \mathfrak{B}_{1}}}{h|_{\partial \mathfrak{B}_{2}}-h|_{\partial \mathfrak{B}_{1}}}h(\textbf{x})+g(\textbf{x}), \quad \textbf{x}\in \mathbb{R}^{3}\setminus (\mathfrak{B}_{1}\cup \mathfrak{B}_{2}).
\end{equation}
Then one can see that $g$ is harmonic in $\mathbb{R}^{3}\setminus \overline{\mathfrak{B}_{1}\cup \mathfrak{B}_{2}}$ and $g|_{\partial \mathfrak{B}_{1}}=g|_{\partial \mathfrak{B}_{2}}$; that is, there is no potential difference of $g$ on $\partial \mathfrak{B}_{1}$ and $\partial \mathfrak{B}_{2}$. By using the same way as in \cite{KLY1,LLBY}, we can show that $|\nabla g|$ is bounded on any bounded subset of $\mathbb{R}^{3}\setminus (\mathfrak{B}_{1}\cup \mathfrak{B}_{2})$. Thus, the function $h$ characterizes the singular behavior of the solution to \eqref{instruction}, and the singular behavior of $\nabla u$ is determined by
$$\frac{u|_{\partial \mathfrak{B}_{2}}-u|_{\partial \mathfrak{B}_{1}}}{h|_{\partial \mathfrak{B}_{2}}-h|_{\partial \mathfrak{B}_{1}}}\nabla h(\textbf{x}).$$
Therefore, the proof of Theorem \ref{theorem1} is reduced to the estimates or expansions of $u|_{\partial \mathfrak{B}_{2}}-u|_{\partial \mathfrak{B}_{1}}, h|_{\partial \mathfrak{B}_{2}}-h|_{\partial \mathfrak{B}_{1}}$, and $\nabla h(\textbf{x})$.

To this end, we introduce the following notations. Let $R_{i}$ be the reflection with respect to $\partial \mathfrak{B}_{i}$, $i=1, 2$, i.e.,
$$R_{i}(\textbf{x})=\frac{r_{i}^{2}(\textbf{x}-\textbf{c}_{i})}{|\textbf{x}-\textbf{c}_{i}|^{2}}+\textbf{c}_{i}.$$
Denote
\begin{align}\label{p_{1,2k}p_{1,2k+1}}
\begin{cases}
\textbf{p}_{1, 2k}:=(R_{1}R_{2})^{k}\textbf{c}_{1},&\quad\mbox{in}~\mathfrak{B}_{1},\\
\textbf{p}_{1, 2k+1}:=R_{2}(R_{1}R_{2})^{k}\textbf{c}_{1},&\quad\mbox{in}~\mathfrak{B}_{2},
\end{cases}
\quad k=0, 1, \ldots,
\end{align}
and
\begin{equation}\label{q_{1}}
q_{1, 0}=1, \quad q_{1, j}=\prod_{l=0}^{j}\mu_{1, l}, \quad j=1, 2, \ldots,
\end{equation}
where
\begin{equation}\label{mu_{1}}
\mu_{1, j}=
\begin{cases}
1, &\mbox{if}~j=0,\\
\frac{r_{1}}{|\textbf{c}_{1}-\textbf{p}_{1, 2k-1}|}, &\mbox{if}~j=2k,~k\geq1,\\
\frac{r_{2}}{|\textbf{c}_{2}-\textbf{p}_{1, 2k}|}, &\mbox{if}~j=2k+1,~k\geq0.
\end{cases}
\end{equation}

Similarly,
\begin{equation*}\label{p_{2}}
\begin{cases}
\textbf{p}_{2, 2k}:=(R_{2}R_{1})^{k}\textbf{c}_{2},&\quad\mbox{in}~\mathfrak{B}_{2},\\
\textbf{p}_{2, 2k+1}:=R_{1}(R_{2}R_{1})^{k}\textbf{c}_{2},&\quad\mbox{in}~\mathfrak{B}_{1},\\
\end{cases}
\quad k=0, 1, \ldots,
\end{equation*}
and
\begin{equation*}\label{q_{2}}
q_{2, 0}=1, \quad q_{2, j}=\prod_{l=0}^{j}\mu_{2, l}, \quad j=1, 2, \ldots,
\end{equation*}
where
\begin{equation}\label{mu_{2}}
\mu_{2, j}=
\begin{cases}
1, &\mbox{if}~j=0,\\
\frac{r_{2}}{|\textbf{c}_{2}-\textbf{p}_{2, 2k-1}|}, &\mbox{if}~j=2k,~k\geq1, \\
\frac{r_{1}}{|\textbf{c}_{1}-\textbf{p}_{2, 2k}|}, &\mbox{if}~j=2k+1,~k\geq0.
\end{cases}
\end{equation}

Set
\begin{equation}\label{def Qs}
Q_{i}=\sum\limits_{j=0}^{\infty}(-1)^{j}q_{i,j},\quad i=1, 2,
\end{equation}
and
\begin{equation}\label{def M}
M=Q_{2}\sum\limits_{k=0}^{\infty}q_{1, 2k}+Q_{1}\sum\limits_{k=0}^{\infty}q_{2, 2k+1}=Q_{1}\sum\limits_{k=0}^{\infty}q_{2, 2k}+Q_{2}\sum\limits_{k=0}^{\infty}q_{1, 2k+1}.
\end{equation}

By using image charge method, Lim and Yun \cite{LY1} obtained the following expression of $h(\textbf{x})$, which has been used to derive estimates like \eqref{lower-upper}.
\begin{lemma}\label{lem2.1}
The solution to \eqref{h} is given by
\begin{equation}\label{def h}
h(\textbf{x})=-\frac{Q_{2}}{M}\sum_{j=0}^{\infty}(-1)^{j}q_{1,j}\Gamma(\textbf{x}-\textbf{p}_{1, j})+\frac{Q_{1}}{M}\sum_{j=0}^{\infty}(-1)^{j}q_{2,j}\Gamma(\textbf{x}-\textbf{p}_{2,j}),
\end{equation}
where $\Gamma(\textbf{x})=\frac{1}{4\pi}|\textbf{x}|^{-1}$ is the fundamental solution of the Laplacian in three dimensions.
\end{lemma}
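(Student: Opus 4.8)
The plan is to verify that the function $h(\textbf{x})$ defined by \eqref{def h} indeed solves the boundary value problem \eqref{h}, and this is essentially a bookkeeping argument built on the reflection (Kelvin transform) structure of the sequences $\textbf{p}_{i,j}$ and $q_{i,j}$. First I would recall the key identity for reflection across a sphere: if $R_i$ is the reflection with respect to $\partial\mathfrak{B}_i$, then for a point charge $\Gamma(\textbf{x}-\textbf{p})$ located at $\textbf{p}$ outside $\mathfrak{B}_i$, the function $\Gamma(\textbf{x}-\textbf{p}) - \frac{r_i}{|\textbf{p}-\textbf{c}_i|}\Gamma(\textbf{x}-R_i(\textbf{p}))$ is constant on $\partial\mathfrak{B}_i$ (this is the classical image-charge formula for the Laplacian in three dimensions). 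The multipliers $\mu_{i,j}$ in \eqref{mu_{1}}--\eqref{mu_{2}} are designed precisely so that $\frac{r_1}{|\textbf{c}_1-\textbf{p}_{1,2k-1}|}$ etc.\ are the right image-charge amplitude ratios, and the cumulative products $q_{i,j}$ in \eqref{q_{1}} record the total amplitude of the $j$-th image.

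Next I would check harmonicity and decay: since each $\textbf{p}_{1,j}$ lies inside $\mathfrak{B}_1$ or $\mathfrak{B}_2$ (alternating, as recorded in \eqref{p_{1,2k}p_{1,2k+1}}), every term $\Gamma(\textbf{x}-\textbf{p}_{1,j})$ is harmonic in $\mathbb{R}^{3}\setminus\overline{\mathfrak{B}_1\cup\mathfrak{B}_2}$; one must argue that the series converges (the $q_{i,j}$ decay geometrically because each $\mu_{i,j}<1$ for the relevant indices once $\varepsilon>0$), so that $h$ is harmonic there and term-by-term differentiation is legitimate. The decay $h=O(|\textbf{x}|^{-2})$ as $|\textbf{x}|\to\infty$ is not automatic from $\Gamma=O(|\textbf{x}|^{-1})$; it requires that the total charge vanish, i.e.\ $-\frac{Q_2}{M}\sum_j(-1)^j q_{1,j} + \frac{Q_1}{M}\sum_j(-1)^j q_{2,j} = -\frac{Q_2}{M}Q_1 + \frac{Q_1}{M}Q_2 = 0$ by \eqref{def Qs}, so the leading $|\textbf{x}|^{-1}$ term cancels and one gets $O(|\textbf{x}|^{-2})$.

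Then I would verify the constant boundary values on $\partial\mathfrak{B}_1$ and $\partial\mathfrak{B}_2$. On $\partial\mathfrak{B}_1$, I would group the first series into consecutive pairs $(-1)^{2k}q_{1,2k}\Gamma(\textbf{x}-\textbf{p}_{1,2k}) + (-1)^{2k+1}q_{1,2k+1}\Gamma(\textbf{x}-\textbf{p}_{1,2k+1})$ and also reassemble the second series, using the image relation $\textbf{p}_{1,2k+1}=R_2(\textbf{p}_{1,2k})$ and $\textbf{p}_{2,2k+1}=R_1(\textbf{p}_{2,2k})$ together with the amplitude ratios encoded in the $q$'s; each reflection-pair collapses to a constant on $\partial\mathfrak{B}_1$, and the residual cross-terms telescope against the prefactors $Q_1,Q_2,M$, leaving a constant. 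The same computation with the roles of the indices $1,2$ swapped handles $\partial\mathfrak{B}_2$. Finally, for the flux conditions $\int_{\partial D_i}\partial_{\nu^i}h\,d\sigma=(-1)^{i+1}$, I would use that for a point charge inside $\mathfrak{B}_i$ the outward flux of $\partial_{\nu^i}\Gamma(\textbf{x}-\textbf{p})$ through $\partial\mathfrak{B}_i$ is $-1$ (Gauss), while a charge outside contributes zero flux; counting which $\textbf{p}_{i,j}$ fall inside which ball and summing the amplitudes $q_{i,j}$ with signs gives exactly $\sum_k q_{1,2k}$-type sums whose combinations with $Q_1,Q_2$ reproduce $M$ in numerator and denominator by \eqref{def M}, yielding $\pm1$.

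The main obstacle will be the boundary-value computation: organizing the double series so that the reflection identities cancel all the non-constant parts is delicate, because on $\partial\mathfrak{B}_1$ the ``good'' pairs from the $\textbf{p}_{1,\cdot}$ sequence start with $k\geq 0$ (the pair $\textbf{p}_{1,0},\textbf{p}_{1,1}$ with $\textbf{p}_{1,0}=\textbf{c}_1$), whereas the pairs from the $\textbf{p}_{2,\cdot}$ sequence that are adapted to $\partial\mathfrak{B}_1$ are $\textbf{p}_{2,2k+1},\textbf{p}_{2,2k+2}$ (shifted by one), so the leftover boundary terms are $\Gamma(\textbf{x}-\textbf{p}_{2,0})$-type pieces and tail terms that must be matched using precisely the definitions \eqref{def Qs}--\eqref{def M}; keeping the indices, signs, and the two equivalent expressions for $M$ consistent is where all the care is needed. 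Convergence of the series (needed to justify rearrangement) is a secondary technical point, handled by the geometric decay of $q_{i,j}$.
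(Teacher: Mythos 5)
The paper does not actually prove this lemma: \eqref{def h} is quoted from Lim and Yun \cite{LY1}, where it is obtained by precisely the image-charge construction you describe, so your verification route is in substance the same as the cited source's. Your outline is correct in all the essential points: harmonicity and termwise differentiation are justified because $\mu_{i,j}\le\max\{\tfrac{r_1}{r_1+2\varepsilon},\tfrac{r_2}{r_2+2\varepsilon}\}<1$ uniformly in $j$, so $q_{i,j}$ decays geometrically for each fixed $\varepsilon>0$; the decay $O(|\textbf{x}|^{-2})$ follows from the vanishing total charge $-\tfrac{Q_2}{M}Q_1+\tfrac{Q_1}{M}Q_2=0$ via \eqref{def Qs}; and the flux conditions follow from Gauss's theorem together with the two equivalent expressions of $M$ in \eqref{def M}, exactly as you indicate (the first expression gives $+1$ on $\partial\mathfrak{B}_1$, the second gives $-1$ on $\partial\mathfrak{B}_2$), after which uniqueness for \eqref{h} (already recorded in the paper) identifies the formula with the solution. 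The only place your anticipation is off is the boundary step, which is simpler than you fear: there is no cross-series interaction and the prefactors $Q_1,Q_2,M$ play no role there. On $\partial\mathfrak{B}_1$ the $R_1$-adapted pairs in the first series are $(\textbf{p}_{1,2k+1},\textbf{p}_{1,2k+2})$, each annihilated exactly by the image identity since $\textbf{p}_{1,2k+2}=R_1\textbf{p}_{1,2k+1}$ and $\mu_{1,2k+2}=\tfrac{r_1}{|\textbf{c}_1-\textbf{p}_{1,2k+1}|}$, leaving only the unpaired $j=0$ charge, i.e.\ the constant $\Gamma(\textbf{x}-\textbf{c}_1)=\tfrac{1}{4\pi r_1}$; in the second series the $R_1$-adapted pairs are $(\textbf{p}_{2,2k},\textbf{p}_{2,2k+1})$, not shifted by one as you wrote, so that series vanishes identically on $\partial\mathfrak{B}_1$ and there is no leftover $\Gamma(\textbf{x}-\textbf{p}_{2,0})$-type piece to match. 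Symmetrically, on $\partial\mathfrak{B}_2$ the first series vanishes and the second reduces to $\tfrac{1}{4\pi r_2}$. Hence each series is separately constant on each sphere, giving $h|_{\partial\mathfrak{B}_1}=-\tfrac{Q_2}{4\pi M r_1}$ and $h|_{\partial\mathfrak{B}_2}=\tfrac{Q_1}{4\pi M r_2}$, in agreement with \eqref{h-difference}; with this small correction your argument goes through as stated.
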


The following Proposition \ref{prop Q} gives the complete expressions of $Q_{1}$, $Q_{2}$, and the asymptotic formula of $M$.

\begin{prop}\label{prop Q}
For $Q_ {1}$, $Q_ {2}$, and $M$ defined by \eqref{def Qs} and \eqref{def M}, we have
$$Q_{1}=-\frac{r_{2}}{r_{1}+r_{2}}\psi\Big(\frac{r_{2}}{r_{1}+r_{2}}\Big),
\quad Q_{2}=-\frac{r_{1}}{r_{1}+r_{2}}\psi\Big(\frac{r_{1}}{r_{1}+r_{2}}\Big),$$
and
\begin{align*}
M=-\frac{1}{2}\frac{r_{1}r_{2}}{(r_{1}+r_{2})^{2}}|\log\varepsilon|
\left(\psi\Big(\frac{r_{1}}{r_{1}+r_{2}}\Big)+\psi\Big(\frac{r_{2}}{r_{1}+r_{2}}\Big)\right)
\left(1+\frac{r_{1}+r_{2}}{r_{2}}O\left(|\log\varepsilon|^{-1}\right)\right),
\end{align*}
where $\psi_{0}$ is digamma function, $\gamma$ is Euler-Mascheroni constant, $O\left(|\log\varepsilon|^{-1}\right)$ is independent of $r$.
\end{prop}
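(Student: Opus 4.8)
The plan is to exploit the explicit one-dimensional nature of the image-charge iteration. The points $\textbf{p}_{i,j}$ all lie on the $x_1$-axis, so the reflections $R_1, R_2$ restricted to that axis become one-dimensional Möbius maps. First I would record the coordinates of $\textbf{p}_{1,j}$ along the $x_1$-axis: writing $\textbf{p}_{1,j}=(t_{1,j},0,0)$, the recursion $t_{1,2k}=R_1(t_{1,2k-1})$, $t_{1,2k+1}=R_2(t_{1,2k})$ can be solved in closed form, at least to leading order in $\varepsilon$. The standard trick (cf. \cite{LY1}) is to pass to a coordinate in which the composed reflection $R_1R_2$ becomes a translation; concretely the two fixed points $\textbf{f}_1,\textbf{f}_2$ of $R_1R_2$ (the limiting image points) are the analogue of the 2D picture, and a Möbius change of variable sending $\textbf{f}_1\mapsto 0$, $\textbf{f}_2\mapsto\infty$ linearizes the iteration. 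In that coordinate one finds that $q_{1,j}$ behaves like $\dfrac{1}{j+\alpha}$ up to a multiplicative constant, where $\alpha=\dfrac{r_2}{r_1+r_2}$ (and correspondingly $\alpha=\dfrac{r_1}{r_1+r_2}$ for $q_{2,j}$); this is exactly where the digamma function will enter, since $\sum_{j\ge 0}\dfrac{(-1)^j}{j+\alpha}$ and $\sum_{j\ge 0}\dfrac{1}{j+\alpha}-\dfrac{1}{j+1}$ are the two series whose values are $\psi_0$-type expressions.

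Next I would make the asymptotics of $\mu_{i,j}$, hence of $q_{i,j}$, precise. Because $r_1,r_2\gg\varepsilon$, the distances $|\textbf{c}_1-\textbf{p}_{1,2k-1}|$ and $|\textbf{c}_2-\textbf{p}_{1,2k}|$ converge, as $k\to\infty$, to definite limits determined by the limiting image points, and the product $q_{1,j}=\prod_{l=0}^j\mu_{1,l}$ telescopes into a ratio of linear expressions in $j$. I expect to establish, for each fixed $j$, an identity of the form $q_{1,j}=\dfrac{c_1}{j+\frac{r_2}{r_1+r_2}}\bigl(1+O(j\varepsilon/r_{\min})\bigr)$ with an explicit constant $c_1$, uniformly for $j\lesssim |\log\varepsilon|$, and a geometric-type decay estimate $q_{1,j}\lesssim (1-c\varepsilon)^j$ for the tail, so that the alternating sum $Q_1=\sum(-1)^j q_{1,j}$ is dominated by the regime where the closed form is valid. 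Summing the closed form gives $Q_1=-\dfrac{r_2}{r_1+r_2}\psi\bigl(\dfrac{r_2}{r_1+r_2}\bigr)$ on the nose (the $\varepsilon$-dependence drops out in the limit because the alternating series converges absolutely fast enough once the geometric cutoff is accounted for), and symmetrically for $Q_2$.

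For $M$, the two non-alternating sums $\sum_k q_{1,2k}$ and $\sum_k q_{2,2k+1}$ (and the other pair) genuinely diverge logarithmically as $\varepsilon\to 0$: since $q_{1,2k}\sim \dfrac{c_1}{2k+\frac{r_2}{r_1+r_2}}$ only down to $k\sim 1/\varepsilon$, where the geometric decay kicks in, the partial sum is $\sim \dfrac{c_1}{2}\log(1/\varepsilon)=\dfrac{c_1}{2}|\log\varepsilon|$ to leading order, with a bounded correction. Feeding this and the values of $Q_1,Q_2$ into the definition \eqref{def M} and simplifying the constants yields the stated formula for $M$, including the factor $\dfrac{r_1 r_2}{(r_1+r_2)^2}$ and the relative error $\dfrac{r_1+r_2}{r_2}O(|\log\varepsilon|^{-1})$. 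I would also check that the two expressions for $M$ in \eqref{def M} agree, which is a consistency identity following from the reflection relations.

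The main obstacle, I expect, is controlling the transition regime $j\sim 1/\varepsilon$ uniformly: one needs the closed-form approximation $q_{i,j}\approx c_i/(j+\alpha)$ to hold with a controlled error on a range of $j$ that grows like $|\log\varepsilon|$ for the $Q_i$ estimates but like $1/\varepsilon$ for the $M$ estimate, and simultaneously a clean geometric tail bound beyond that range, all with constants tracked explicitly in $r_1,r_2$ (not just in $\varepsilon$). This requires a careful analysis of the linearized iteration — essentially estimating how fast the iterates $\textbf{p}_{1,j}$ approach the limiting image point and how the multiplier $\mu_{1,j}$ approaches $1$ — and matching the two regimes so that no leading-order contribution is lost. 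The digamma identities themselves (e.g. $\sum_{k\ge 0}\bigl(\frac{1}{k+1}-\frac{1}{k+\alpha}\bigr)=\psi_0(\alpha)+\gamma$) are classical and will be invoked directly.
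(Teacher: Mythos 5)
Your overall route is the same as the paper's: solve the one-dimensional reflection iteration near the fixed points of the combined reflections, approximate $q_{i,j}$ by an explicit harmonic-like sequence, evaluate the alternating sums $Q_i$ by the classical digamma identity, and extract the logarithmic growth of the non-alternating sums to get $M$ from \eqref{def M}. However, there is a genuine quantitative error exactly where the constant in $M$ comes from, namely in the scale of the transition regime. The fixed point of $R_1R_2$ sits at scaled distance $\mathscr{P}_1\simeq 2\big(\tfrac{r\delta}{r+1}\big)^{1/2}$ from the origin, the multiplier of the linearized iteration is $A_1^{-1}=1-O(\sqrt{\delta})$, and consequently the harmonic regime $q_{1,2k}\approx\frac{r}{k(r+1)+r}$ persists only up to $k\sim N(\delta)\sim\delta^{-1/2}$, beyond which the decay is geometric with ratio $1-c\sqrt{\delta}$ (this is the content of Lemmas \ref{lem3.1} and \ref{lem3.2}). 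You instead assert a tail bound $q_{1,j}\lesssim(1-c\varepsilon)^j$ and a crossover at $k\sim 1/\varepsilon$. If you carry your bookkeeping through with the constant $c_1$ forced by the small-$k$ behavior (i.e. $c_1=\tfrac{2r_2}{r_1+r_2}$), you obtain $\sum_k q_{1,2k}\approx\tfrac{r_2}{r_1+r_2}|\log\varepsilon|$, which is twice the correct value $\tfrac12\tfrac{r_2}{r_1+r_2}|\log\varepsilon|+O(1)$ of Remark \ref{rem2.31}; fed into \eqref{def M}, this yields an $M$ twice the one stated in the Proposition. The factor $\tfrac12$ is precisely the footprint of the $\varepsilon^{-1/2}$ crossover (the $\sqrt{\varepsilon}$-distance of the fixed points), and your outline, while correctly flagging the transition regime as the main obstacle, places it at the wrong scale; the $Q_i$ computation survives this (the alternating series is dominated by $j=O(1)$ and only needs per-term errors that vanish on a range growing with $|\log\varepsilon|$), but the $M$ computation does not.

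A secondary imprecision: the single closed form $q_{1,j}\approx c_1/(j+\tfrac{r_2}{r_1+r_2})$ cannot hold with one shift for all $j$. The correct leading behavior distinguishes parities, $q_{1,2k}\approx\tfrac{r}{r+1}\tfrac{1}{k+\frac{r}{r+1}}$ and $q_{1,2k+1}\approx\tfrac{r}{r+1}\tfrac{1}{k+1}$ (see \eqref{def Theta} and \eqref{q_{1,2k}}), and for the alternating sum this distinction is essential: $Q_1=\sum_k(q_{1,2k}-q_{1,2k+1})$ is evaluated through $\sum_{k\ge0}\big(\tfrac{1}{k+1}-\tfrac{1}{k+\alpha}\big)=\psi_0(\alpha)+\gamma$ with $\alpha=\tfrac{r_2}{r_1+r_2}$, whereas a single-shift model leads to $\sum_j\tfrac{(-1)^j}{j+\alpha}$, which has a different value (e.g. $\pi/2$ versus $\log 2$ when $r_1=r_2$). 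Since you quote both identities you would likely repair this once the even/odd structure of $\mu_{1,2k}$ and $\mu_{1,2k+1}$ is written out, but as stated the closed form does not give $Q_1=-\tfrac{r_2}{r_1+r_2}\psi\big(\tfrac{r_2}{r_1+r_2}\big)$, and the claimed per-term error $O(j\varepsilon/r_{\min})$ should be replaced by a uniform $O(\sqrt{\delta})$-type bound on the whole range $k\le N(\delta)$, with the $r$-dependence tracked as in \eqref{q_{1,2k}}, if the same estimates are to serve both the $Q_i$ and the $M$ computations.
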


We remark that Lim and Yun in \cite{LY1} obtained the upper and lower bounds of $|\nabla u|$ by using the estimates
\begin{equation}\label{est Qi}
\frac{1}{C}\big(\frac{r_{i}}{r_{1}+r_{2}}\big)\leq Q_{i}\leq C\big(\frac{r_{i}}{r_{1}+r_{2}}\big),\quad i=1,2.
\end{equation}
Proposition \ref{prop Q} is an important improvement on $Q_{i}$, $i=1,2$, which is the first difficulty that we overcome in this paper.

Let
\begin{align*}
R_{\delta}: =\left\{\textbf{x}\in \mathbb{R}^{3}\setminus (\mathfrak{B}_{1}\cup \mathfrak{B}_{2})\big|~~\rho(\textbf{x})\leq r_{1}|\log\delta|^{-2}\right\}
\end{align*}
be a narrow region in between $\mathfrak{B}_{1}$ and $\mathfrak{B}_{2}$, where $\delta=\frac{\varepsilon}{r_{1}}$. Then we have the asymptotic formula of $\nabla h(\textbf{x})$ in $R_{\delta}$.

\begin{prop}\label{prop h}
For ${\bf{x}}\in R_{\delta}$, we have
\begin{equation*}
\nabla h(\textbf{x})=-\frac{r_{1}+r_{2}}{4\pi r_{1}r_{2}|\log\varepsilon|}\frac{1}{\varepsilon+\frac{1}{4}\left(\frac{1}{r_{1}}+\frac{1}{r_{2}}\right)\rho({\bf{x}})^{2}}\left({\bf{n}}
+\Big(\frac{r_{1}+r_{2}}{r_{2}}\Big)^{2}O(|\log\varepsilon|^{-1})\right).
\end{equation*}
\end{prop}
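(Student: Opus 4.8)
The plan is to compute $\nabla h$ directly from the image-charge series representation \eqref{def h}, using Proposition \ref{prop Q} to control the global constants $Q_1, Q_2, M$ and then analyzing the series term-by-term. Write $h = -\frac{Q_2}{M}\sum_j (-1)^j q_{1,j}\Gamma(\mathbf{x}-\mathbf{p}_{1,j}) + \frac{Q_1}{M}\sum_j (-1)^j q_{2,j}\Gamma(\mathbf{x}-\mathbf{p}_{2,j})$. The first step is to pin down the geometry of the image points $\mathbf{p}_{i,j}$: these lie on the $x_1$-axis, and since $r_1, r_2 \gg \varepsilon$, the iterated reflections $R_1 R_2$ and $R_2 R_1$ converge geometrically to their fixed points, which to leading order are the points $\pm\frac{r_1 r_2}{k(r_1+r_2)+r_{\min}}$ etc.\ appearing in \eqref{CH min}--\eqref{CH max}. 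One needs the asymptotics of $\mathbf{p}_{i,j}$ and of the weights $q_{i,j}$ as $\varepsilon\to 0$; in particular the product $q_{i,j}$ telescopes nicely because $\mu_{i,j}$ is a ratio of $r_i$ to a distance, and distances between consecutive image points are comparable to $r_1 r_2/(j(r_1+r_2))$ for moderate $j$, with an $\varepsilon$-correction that only becomes important for $j \gtrsim |\log\varepsilon|$.

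The second step is to separate the ``singular'' part of $\nabla h$ — the contribution of the first image charges $\mathbf{p}_{1,0} = \mathbf{c}_1$ and $\mathbf{p}_{2,0} = \mathbf{c}_2$, i.e.\ the centers — from the ``regular'' tail. Actually the true singularity sits where the nearest image points from the two balls nearly coincide at the origin (the narrow neck): for $\mathbf{x}$ in $R_\delta$, the dominant behavior of $\nabla\Gamma(\mathbf{x}-\mathbf{p}_{1,j})$ comes from those $\mathbf{p}_{1,j}$ closest to $\mathbf{x}$. The key computation is the standard one from \cite{KLY2,LY1}: summing $\sum_j (-1)^j q_{1,j}\nabla\Gamma(\mathbf{x}-\mathbf{p}_{1,j})$ against the nearly-geometric decay of $q_{1,j}$ produces, after comparison with an integral / Euler--Maclaurin-type estimate, a term of size $\frac{1}{|\log\varepsilon|}\cdot\frac{1}{\varepsilon + \frac14(r_1^{-1}+r_2^{-1})\rho(\mathbf{x})^2}$ times $\mathbf{n}$, where the denominator $\varepsilon + \frac14(\tfrac1{r_1}+\tfrac1{r_2})\rho^2$ is exactly the half-distance between $\partial\mathfrak{B}_1$ and $\partial\mathfrak{B}_2$ at transverse coordinate $\rho$ (to second order: the gap between two spheres of radii $r_1, r_2$ separated by $2\varepsilon$ grows quadratically with curvature $\frac1{r_1}+\frac1{r_2}$). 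The constant $-\frac{r_1+r_2}{4\pi r_1 r_2}$ then emerges from combining the factor $\frac{Q_2}{M}$ (which by Proposition \ref{prop Q} is $\frac{r_2\psi(r_2/(r_1+r_2))}{(r_1+r_2)}\big/\big[\frac12\frac{r_1 r_2}{(r_1+r_2)^2}|\log\varepsilon|(\psi(\tfrac{r_1}{r_1+r_2})+\psi(\tfrac{r_2}{r_1+r_2}))\big]$, and similarly for $\frac{Q_1}{M}$) with the leading coefficient of the summed series; the $\psi$-factors cancel against those in $M$, leaving the clean prefactor.

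The third step is to bound the error. One shows the transverse ($x_2, x_3$) components of $\nabla h$ and all the ``non-leading'' pieces are smaller by a factor $|\log\varepsilon|^{-1}$ (hence the $\big(\frac{r_1+r_2}{r_2}\big)^2 O(|\log\varepsilon|^{-1})$ vector correction — the prefactor $(\frac{r_1+r_2}{r_2})^2$ tracks the worst-case ratio when $r_2 = r_{\min}$, coming from the estimate \eqref{est Qi} and the image-point spacing). This requires: (i) controlling the tail $\sum_{j \gtrsim |\log\varepsilon|}$ where the $\varepsilon$-correction to the $\mathbf{p}_{i,j}$ matters, showing it contributes only to the regular part absorbed in $\nabla g$ or to the $O(|\log\varepsilon|^{-1})$ term; (ii) in $R_\delta$, using $\rho(\mathbf{x}) \le r_1|\log\delta|^{-2}$ to ensure the transverse displacement from the axis of symmetry is negligible relative to the longitudinal scale, so that $\mathbf{n}$ really is the leading direction; (iii) Taylor-expanding $\Gamma(\mathbf{x}-\mathbf{p}_{i,j})$ around the axis and showing the $\rho^2$-terms reorganize into the denominator exactly.

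The main obstacle I expect is step two done \emph{uniformly in $r_1, r_2$} and with the correct constant: one must carefully match the discrete sum $\sum_j (-1)^j q_{1,j}\nabla\Gamma(\mathbf{x}-\mathbf{p}_{1,j})$ to a continuous model, tracking how the alternating signs combine the even/odd image charges (those inside $\mathfrak{B}_1$ versus reflected into $\mathfrak{B}_2$), and verify that the $\varepsilon$-dependent corrections to the image-point locations produce precisely the $\varepsilon + \frac14(r_1^{-1}+r_2^{-1})\rho^2$ denominator rather than something merely comparable to it. This is the place where the asymmetry $r_1 \ne r_2$ genuinely complicates the bookkeeping compared to \cite{KLY2}, since the two balls contribute image sequences with different geometric ratios $\sim r_2/r_1$ and $\sim r_1/r_2$, and these must be combined with the weights $Q_1/M$, $Q_2/M$ so that the final answer is symmetric and the $\psi$-functions cancel cleanly.
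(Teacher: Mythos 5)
Your outline follows the same route as the paper (rescale, use the image--charge series, compare the discrete sums with an integral, show the transverse derivatives and the tails are lower order, and let Proposition \ref{prop Q} cancel the $\psi$-factors between $Q_{1},Q_{2}$ and $M$ -- your bookkeeping of the prefactor $-\frac{r_{1}+r_{2}}{4\pi r_{1}r_{2}|\log\varepsilon|}$ is consistent with the paper's). However, there is a genuine gap exactly where you say you ``expect the main obstacle'': the discrete-to-continuum matching with the correct constant and the \emph{exact} denominator is asserted (``after comparison with an integral / Euler--Maclaurin-type estimate, a term of size $\ldots$ emerges''), not proved, and this step is the entire content of Section \ref{sec pf prop h}. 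What is missing is the mechanism: (i) the correct continuous model is not the naive one but $v_{1}^{0}(\tilde{\bf x})=\int_{\mathscr{P}_{1}}^{1}f_{1}(t)\,dt$ with the density weight $1/\sqrt{t^{2}-\mathscr{P}_{1}^{2}}$, reflecting how the image points accumulate at the fixed point $\mathscr{P}_{1}$; it is the explicit evaluation of $\partial_{\tilde{x}_{1}}v_{1}^{0}$ (Lemma \ref{lem x1 v0}), together with $\mathscr{P}_{1}^{2}\simeq\frac{4r}{r+1}\delta$, that produces the precise denominator $\varepsilon+\frac{1}{4}\big(\frac{1}{r_{1}}+\frac{1}{r_{2}}\big)\rho({\bf x})^{2}$ rather than something merely comparable to it; (ii) the passage from $\sum_{k}$ to this integral rests on the quantitative density identity of Lemma \ref{lem finer p q}\,\eqref{lem3.3-1}, which expresses $q_{1,2k}$ through the spacings $(\mathscr{P}_{1,2k}-\mathscr{P}_{1,2k+2})^{\frac{1}{r+1}}(\mathscr{P}_{1,2k+3}-\mathscr{P}_{1,2k+1})^{\frac{r}{r+1}}$ times $(\mathscr{P}_{1,2k}^{2}-\mathscr{P}_{1}^{2})^{-\frac{1}{2(r+1)}}(\mathscr{P}_{1,2k+1}^{2}-\mathscr{P}_{2}^{2})^{-\frac{r}{2(r+1)}}$ up to $1+O(|\log\delta|^{-1})$ uniformly in $r$; nothing in your plan substitutes for this, and without it the claimed cancellation of the $\varepsilon$-corrections into the exact quadratic gap profile is unsupported.

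Two further ingredients of the paper's argument should be made explicit if you carry the plan out. First, the alternating series is not handled charge by charge but by pairing each $q_{1,2k}$ at $\mathscr{P}_{1,2k}$ with an opposite charge at the reflected point $-\mathscr{P}_{1,2k}$ (as a proxy for $\mathscr{P}_{1,2k+1}$), with the error controlled through $|\mathscr{P}_{1,2k}+\mathscr{P}_{1,2k+1}|\leq C\frac{r}{r+1}k^{-2}$ for $k\leq N(\delta)$ and $\leq C\delta$ beyond; this antisymmetrization is what makes the summand dipole-like and the comparison with $f_{1}$ legitimate. Second, the sum must be split over three scales $k\leq N_{0}(\delta)$, $N_{0}(\delta)\leq k\leq N_{1}(\delta)$, $k\geq N_{1}(\delta)$, with the outer zones bounded crudely by $C\big(\frac{r+1}{r}\big)^{2}|\log\delta|^{2}$ and by Lemma \ref{lem finer p q}\,\eqref{lem3.3-2} respectively; these are negligible only because the main term is of size $(\delta+\tilde{\rho}^{2})^{-1}\gtrsim|\log\delta|^{4}$ on $R_{\delta}$, a point your step (i)/(ii) gestures at but does not quantify. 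Your treatment of the transverse derivatives and of the normalization/$\psi$-cancellation is consistent with Lemma \ref{lem rho v} and the final assembly in the paper, so the gap is localized: supply the density identity and the auxiliary integrals (or an equivalent Euler--Maclaurin argument with the $1/\sqrt{t^{2}-\mathscr{P}_{1}^{2}}$ weight made uniform in $r_{1},r_{2}$), and the rest of your plan goes through.
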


From \eqref{p_{1,2k}p_{1,2k+1}}--\eqref{mu_{2}} and Lemma \ref{lem2.1}, it is not difficult to see that
\begin{align}\label{h-difference}
&\frac{h(\textbf{x})}{h|_{\partial \mathfrak{B}_{1}}-h|_{\partial \mathfrak{B}_{2}}}=(-4\pi M)\left(\frac{Q_{2}}{r_{1}}+\frac{Q_{1}}{r_{2}}\right)^{-1}h(\textbf{x})\nonumber\\
=&\frac{r_{1}r_{2}}{r_{2}Q_{2}+r_{1}Q_{1}}\Bigg(Q_{2}\sum_{j=0}^{\infty}\frac{(-1)^{j}q_{1,j}}{|\textbf{x}-\textbf{p}_{1, j}|}-Q_{1}\sum_{j=0}^{\infty}\frac{(-1)^{j}q_{2,j}}{|\textbf{x}-\textbf{p}_{2,j}|}\Bigg).
\end{align}
Finally, substituting these estimates above into the relationship \cite{Y1,Y2}
\begin{equation}\label{u_{1}-u_{2}}
u|_{\partial \mathfrak{B}_{1}}-u|_{\partial \mathfrak{B}_{2}}=\int_{\partial \mathfrak{B}_{1}}H\frac{\partial h}{\partial \nu^{1}}\ d\sigma+\int_{\partial \mathfrak{B}_{2}}H\frac{\partial h}{\partial \nu^{2}}\ d\sigma,
\end{equation}
we have

\begin{prop}\label{prop u}
As $\varepsilon\rightarrow0$, we have
\begin{equation*}
u|_{\partial \mathfrak{B}_{1}}-u|_{\partial \mathfrak{B}_{2}}=\frac{2\Psi(r_{1}, r_{2})+O\big(\sqrt{\varepsilon}\big|\log\varepsilon\big|\big)}{|\log\varepsilon|}\left(1+\frac{r_{1}+r_{2}}{r_{2}}O(|\log\varepsilon|^{-1})\right),
\end{equation*}
where $\Psi(r_{1}, r_{2})$ is defined by \eqref{def psi}.
\end{prop}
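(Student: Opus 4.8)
The plan is to evaluate the right-hand side of the reciprocity identity \eqref{u_{1}-u_{2}} by plugging in the image-charge series for $h$ from Lemma \ref{lem2.1}. First I would use the fact that $H$ is harmonic together with the mean value property on each sphere $\partial\mathfrak{B}_i$: for a point charge $\Gamma(\mathbf{x}-\mathbf{p})$ with $\mathbf{p}$ inside or outside $\mathfrak{B}_i$, the integral $\int_{\partial\mathfrak{B}_i} H\,\partial_{\nu^i}\Gamma(\mathbf{x}-\mathbf{p})\,d\sigma$ reduces to $H$ evaluated at $\mathbf{c}_i$ or at the image point, up to elementary geometric factors. Since the $\mathbf{p}_{i,j}$ are exactly the iterated reflection points and the $q_{i,j}$ the corresponding charge magnitudes, this collapses \eqref{u_{1}-u_{2}} into a pair of numerical series in which $H$ is evaluated at the points $\mathbf{p}_{i,j}$ on the $x_1$-axis. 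The key computation is then to identify, in the limit $\varepsilon\to 0$, the locations $\mathbf{p}_{i,j}\to$ the limiting image points $\bigl(\pm\frac{r_1r_2}{k(r_1+r_2)+r_{\ast}},0,0\bigr)$ and $\bigl(\mp\frac{r_1r_2}{(k+1)(r_1+r_2)},0,0\bigr)$ appearing in \eqref{CH min}--\eqref{CH max}, and the weights $q_{i,j}$ to the harmonic-type coefficients $\frac{1}{k+r_{\min}/(r_1+r_2)}$ etc.; this is the bookkeeping that produces $C^H_{\min}$, $C^H_{\max}$.

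Next I would assemble the pieces. By Proposition \ref{prop Q} we already have the closed forms $Q_1 = -\frac{r_2}{r_1+r_2}\psi\bigl(\frac{r_2}{r_1+r_2}\bigr)$, $Q_2 = -\frac{r_1}{r_1+r_2}\psi\bigl(\frac{r_1}{r_1+r_2}\bigr)$, so the prefactor $\frac{r_1 r_2}{r_2 Q_2 + r_1 Q_1}$ in \eqref{h-difference} becomes $-\frac{r_1+r_2}{\psi(r_1/(r_1+r_2))+\psi(r_2/(r_1+r_2))}$ up to an overall constant, which is precisely the denominator of $\Psi(r_1,r_2)$ in \eqref{def psi}. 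Combining this with the numerator $\psi(r_{\max}/(r_1+r_2))C^H_{\min} + \psi(r_{\min}/(r_1+r_2))C^H_{\max}$ coming from the two series (with $Q_2$ multiplying the $\mathfrak{B}_1$-series and $Q_1$ the $\mathfrak{B}_2$-series, and the $\psi$-factors tracking which ball has the larger radius), one recovers $2\Psi(r_1,r_2)$, with the factor $2$ from symmetrizing the two surface integrals. The multiplicative error factor $\bigl(1+\frac{r_1+r_2}{r_2}O(|\log\varepsilon|^{-1})\bigr)$ is inherited directly from the corresponding factor in the asymptotic formula for $M$ in Proposition \ref{prop Q} (equivalently from $\nabla h$ in Proposition \ref{prop h}), while the $\frac{1}{|\log\varepsilon|}$ is the leading order of $M^{-1}$.

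The remaining ingredient is the additive error $O(\sqrt{\varepsilon}|\log\varepsilon|)$: this comes from two sources that I would estimate separately. One is the discrepancy between the finite-$\varepsilon$ image points $\mathbf{p}_{i,j}$ and their $\varepsilon\to 0$ limits, together with the discrepancy between $q_{i,j}$ and its limit; here one needs a uniform-in-$j$ bound on $|\mathbf{p}_{i,j}-\mathbf{p}_{i,j}^{0}|$ and on the tail of the series, using the geometric-decay estimates on $q_{i,j}$ established in Section \ref{sec pf prop Q} (the same estimates behind \eqref{est Qi} and the refined Proposition \ref{prop Q}). The other is the truncation of the infinite series and the smoothness of $H$ (Lipschitz on the relevant compact axis segment) to turn point-displacements into value-displacements. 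I expect the main obstacle to be exactly this error analysis: showing the per-term errors sum to $O(\sqrt{\varepsilon})$ rather than merely $o(1)$ requires that the early terms (small $k$), where $\mathbf{p}_{i,j}$ is genuinely $\varepsilon$-close to a sphere, contribute a displacement of size $O(\sqrt{\varepsilon})$ — this is the $\sqrt{\varepsilon}$ scale characteristic of the distance from $\mathbf{c}$ to the spheres along the $x_1$-axis — while the later terms are controlled by geometric decay; balancing these and extracting the clean $\sqrt{\varepsilon}|\log\varepsilon|$ bound is the delicate part, and is presumably carried out in detail in Section \ref{sec pf prop u}.
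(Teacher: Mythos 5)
Your proposal follows essentially the same route as the paper: start from the reciprocity identity \eqref{u_{1}-u_{2}}, collapse the surface integrals (using that $h$ is constant on each $\partial\mathfrak{B}_i$ and Green's representation over $\partial(\mathfrak{B}_1\cup\mathfrak{B}_2)$) into $\frac{Q_2}{M}\sum_j(-1)^jq_{1,j}H(\textbf{p}_{1,j})+\frac{Q_1}{M}\sum_j(-1)^{j+1}q_{2,j}H(\textbf{p}_{2,j})$, then pass to the limiting image points and weights via the Lemma \ref{lem3.1}/Section \ref{sec pf prop Q} estimates (with $H(\textbf{0})=0$ normalized) and insert the closed forms of $Q_1,Q_2,M$ from Proposition \ref{prop Q}, exactly as the paper does. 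Two small bookkeeping corrections: the collapse yields $H$ evaluated at the charge points $\textbf{p}_{i,j}$ themselves (not at $\textbf{c}_i$ or at image points), and the factor $2$ comes from the $\tfrac12|\log\varepsilon|$ in the asymptotics of $M$ (since $N(\delta)\sim\delta^{-1/2}$), not from symmetrizing the two surface integrals.
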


Now, we are ready to prove Theorem \ref{theorem1}.
\begin{proof}[\bf Proof of Theorem \ref{theorem1}.]
For $\textbf{x}\in R_{\delta}$, by using \eqref{h-difference} and Proposition \ref{prop Q}, we have
\begin{equation*}
\frac{\nabla h(\textbf{x})}{h|_{\partial \mathfrak{B}_{1}}-h|_{\partial \mathfrak{B}_{2}}}=\frac{1}{2}\frac{1}{\varepsilon+\frac{1}{4}\left(\frac{1}{r_{1}}+\frac{1}{r_{2}}\right)\rho({\bf{x}})^{2}}\left({\bf{n}}
+\Big(\frac{r_{1}+r_{2}}{r_{2}}\Big)^{2}O(|\log\varepsilon|^{-1})\right).
\end{equation*}
Thus, combining \eqref{g} and Proposition \ref{prop u}, we have
\begin{align*}
\nabla u(\textbf{x})&=\frac{u|_{\partial \mathfrak{B}_{2}}-u|_{\partial \mathfrak{B}_{1}}}{h|_{\partial \mathfrak{B}_{2}}-h|_{\partial \mathfrak{B}_{1}}}\nabla h(\textbf{x})+\nabla g(\textbf{x})\\
&=\frac{1}{|\log\varepsilon|}
\frac{\Psi(r_{1}, r_{2})+O\big(\sqrt{\varepsilon}\big|\log\varepsilon\big|\big)}{\varepsilon+\frac{1}{4}\left(\frac{1}{r_{1}}+\frac{1}{r_{2}}\right)\rho({\bf{x}})^{2}}
\left({\bf{n}}
+\Big(\frac{r_{1}+r_{2}}{r_{2}}\Big)^{2}O(|\log\varepsilon|^{-1})\right)+\nabla g(\bf{x}).
\end{align*}
Thus, Theorem \ref{theorem1} is proved.
\end{proof}

\begin{remark}
We now compare with the result in \cite{KLY2} for $r_{1}=r_{2}$. When $r_{1}=r_{2}$, the computation becomes easy to handle. In fact, by using the symmetry and \eqref{p_{1,2k}p_{1,2k+1}}--\eqref{mu_{2}}, we have
\begin{align*}
\textbf{p}_{1, 2k}=-\textbf{p}_{2, 2k}, \quad \textbf{p}_{1, 2k+1}=-\textbf{p}_{2, 2k+1},\quad q_{1, 2k}=q_{2, 2k}, \quad q_{1, 2k+1}=q_{2, 2k+1}.
\end{align*}
In this case, we can rewrite
$$\textbf{p}_{2k}:=\textbf{p}_{2,2k}=-\textbf{p}_{1,2k},\quad\textbf{p}_{2k+1}:=\textbf{p}_{1,2k+1}=-\textbf{p}_{2,2k+1},\quad q_{j}:=q_{1,j}=q_{2,j},\quad j\in\mathbb N.$$
Hence,
\begin{equation*}
Q_{1}=Q_{2}=-\frac{1}{2}\psi\Big(\frac{1}{2}\Big),\quad M=-\frac{1}{2}\psi\Big(\frac{1}{2}\Big)\sum_{j=0}^{\infty}q_{j},
\end{equation*}
and \eqref{def h} becomes
$$h(x)=\frac{1}{\sum_{j=0}^{\infty}q_{j}}\sum\limits_{j=0}^{\infty}q_{j}\left(\Gamma(\textbf{x}-\textbf{p}_{j})-\Gamma(\textbf{x}+\textbf{p}_{j})\right),$$
which is the same as (1.22) in \cite{KLY2}. For general case $r_{1}\neq r_{2}$, we should find the explicit expression of $Q_{1}$, $Q_{2}$ and $M$ in terms of $r_{1}$ and $r_{2}$.
\end{remark}

\section{Proof of Proposition \ref{prop Q}}\label{sec pf prop Q}

To prove Proposition \ref{prop Q}, from the definitions of $Q_{1}, Q_{2}$ and $M$, \eqref{def Qs} and  \eqref{def M}, we need to study some properties of the sequences ${\textbf{p}_{i,j}}$ and ${q_{i,j}}$ for $i=1, 2$, $j\in \mathbb{N}$. Differently from the special case when $r_{1}=r_{2}$ in \cite{KLY2}, where the symmetry of the domain makes the computation much easier to deal with, we now have to find the leading terms of ${\textbf{p}_{i,j}}$, ${q_{i,j}}$ in terms of $r_{1}$ and $r_{2}$.

\subsection{Properties of the sequences ${\textbf{p}_{i,j}}$ and ${q_{i,j}}$}

In the following, we assume without loss of generality that $r_{1}>r_{2}$. Set
$$r=\frac{r_{2}}{r_{1}}\quad\quad\mbox{and}\quad~\delta=\frac{\varepsilon}{r_{1}}.$$
We only consider the case when $r\leq1$.
If $r>1$, then we replace $r$ and $\delta$ by $\frac{1}{r}$ and $\frac{\varepsilon}{r_{2}}$, respectively. We fix our notations now. For $\textbf{p}_{i,j}=(p_{i,j}, 0, 0)$, we denote
$${\pmb{\mathscr{P}}}_{i,j}:=\frac{\textbf{p}_{i,j}}{r_{1}}=:(\mathscr{P}_{i,j}, 0, 0), \quad i=1, 2, ~j\in \mathbb{N}.$$
Let $\textbf{p}_{1}=(p_{1}, 0, 0)\in \mathfrak{B}_{1}$ be the fixed point of $R_{1}R_{2}$, then
$\textbf{p}_{2}=R_{2}(\textbf{p}_{1})=(\textbf{p}_{2}, 0, 0)\in \mathfrak{B}_{2}$ is the fixed point of $R_{2}R_{1}$. We emphasize that $\textbf{p}_{i,j}\in \mathfrak{B}_{1}$ decreases to $\textbf{p}_{1}$ if $i+j$ is odd, and $\textbf{p}_{i,j}\in \mathfrak{B}_{2}$ increases to $\textbf{p}_{2}$ if $i+j$ is even, $i=1,2, j\in\mathbb N$. For readers' convenience, we now list some results obtained in subsection 4.3 of \cite{LY1} as follows.
\begin{equation}\label{p_{1}}
\mathscr{P}_{i}:=\frac{p_{i}}{r_{1}}=(-1)^{i-1}2\Big(\frac{r\delta}{r+1}\Big)^{1/2}+O_{i}(\delta),\quad i=1,2,
\end{equation}
and the sequence $\mathscr{P}_{1, j}$ can be expressed as
\begin{equation}\label{2.4}
\mathscr{P}_{1,2k}=\left(\left(\frac{1}{1+\delta-\mathscr{P}_{1}}+B_{1}\right)A_{1}^{k}-B_{1}\right)^{-1}+\mathscr{P}_{1},
\end{equation}
\begin{equation}\label{2.5}
\mathscr{P}_{1,2k+1}=\left(\left(\frac{1}{-\frac{r}{r+1}+O(\delta)-\mathscr{P}_{2}}+B_{2}\right)A_{2}^{k}-B_{2}\right)^{-1}+\mathscr{P}_{2},
\end{equation}
where
\begin{equation}\label{A_{1}}
A_{i}
=1+4\Big(\frac{r+1}{r}\delta\Big)^{1/2}+\frac{r+1}{r}O_{i}(\delta),\quad i=1,2,
\end{equation}
and
\begin{equation}\label{B_{1}}
\sqrt{\delta}B_{i}=(-1)^{i-1}\frac{1}{4}\Big(\frac{r+1}{r}\Big)^{1/2}+\frac{r+1}{r}O_{i}(\sqrt{\delta}),\quad i=1,2.
\end{equation}

For the sake of convenience, we will only deal with the case of $i=1$ for instance, since the argument for $i=2$ is the same. Recalling that $\mathscr{P}_{1,2k}\in \mathfrak{B}_{1}$ and $\mathscr{P}_{1,2k+1}\in \mathfrak{B}_{2}$, then for simplicity, we use
\begin{equation}\label{def Theta}
\Theta_{1,2k}(r):=\frac{r}{k(r+1)+r},\quad\Theta_{1,2k+1}(r):=\frac{r}{(k+1)(r+1)}
\end{equation}
to denote the main terms of $\mathscr{P}_{i,j}$ and $q_{i,j}$, $i=1,2$, $j\in\mathbb{N}$. We first choose an approximate number
\begin{equation*}
N(\delta)=\min_{k\in \mathbb{N}}\left\{k\geq \frac{1}{\widetilde{C}}\frac{\log2}{8}\Big(\frac{r}{r+1}\Big)^{1/2}\frac{1}{\sqrt{\delta}}\right\}
\end{equation*}
which is fixed in \cite{LY1} and $\tilde{C}>0$ is a constant independent of $\varepsilon,r$, and $k$, so that the sequence terms of $k\leq N(\delta)$ are dominant in the sequences $\mathscr{P}_{1,2k}$ and $\mathscr{P}_{1,2k+1}$.

\begin{lemma}\label{lem3.1}
Let $N(\delta)>0$ be defined as above. If $k\leq N(\delta)$, we have
\begin{equation}\label{y_{1,2k}}
\big|\mathscr{P}_{1,2k}-\Theta_{1,2k}(r)\big|,~\big|\mathscr{P}_{1,2k+1}+\Theta_{1,2k+1}(r)\big|\leq C\Big(\frac{r\delta}{r+1}\Big)^{1/2},
\end{equation}
and
\begin{equation}\label{q_{1,2k}}
\big|q_{1,2k}-\Theta_{1,2k}(r)\big|,~\big|q_{1,2k+1}-\Theta_{1,2k+1}(r)\big|\leq C\Big(\frac{r+1}{r}\delta\Big)^{1/2},
\end{equation}
where $C>0$ is independent of $r$ and $\delta$.
\end{lemma}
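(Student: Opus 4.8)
The plan is to read everything off from the explicit formulas \eqref{p_{1}}--\eqref{B_{1}} and \eqref{2.4}--\eqref{2.5}, working with the natural small parameter $t:=\bigl(\tfrac{r+1}{r}\delta\bigr)^{1/2}$, so that $\delta=\tfrac{r}{r+1}t^2$ and the two target estimates become $|\mathscr P_{1,j}\mp\Theta_{1,j}|\le C\tfrac r{r+1}t$ and $|q_{1,j}-\Theta_{1,j}|\le Ct$. The cut-off $N(\delta)$ enters only through the bound $kt\le c_0$ with $c_0$ a small absolute constant (since $N(\delta)\asymp t^{-1}$); this keeps $e^{4kt}$ bounded and makes all the $(1+O(kt))^{-1}=1+O(kt)$ manipulations below legitimate. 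From \eqref{p_{1}}--\eqref{B_{1}} I would first record $\mathscr P_i=(-1)^{i-1}\tfrac{2r}{r+1}t(1+O(t))$, $\log A_i=4t(1+O(t))$ hence $A_i^{\,k}=e^{4kt}(1+O(kt^2))$, and $1/B_i=(-1)^{i-1}\tfrac{4r}{r+1}t(1+O(t))$ — noting that the errors in $\mathscr P_i$ and $1/B_i$ are in fact only $\tfrac r{r+1}O(t^2)$ — and introduce the \emph{repelling} fixed point $\mathscr P_i^\ast:=\mathscr P_i-1/B_i=(-1)^i\tfrac{2r}{r+1}t(1+O(t))$. Since \eqref{2.4}--\eqref{2.5} just say that $R_1R_2$ (resp.\ $R_2R_1$) is conjugated by $x\mapsto\frac1{x-\mathscr P_i}+B_i$ to $x\mapsto A_ix$, they are equivalent to the cross-ratio identities
\begin{equation*}
\frac{\mathscr P_{1,2k}-\mathscr P_1^\ast}{\mathscr P_{1,2k}-\mathscr P_1}=A_1^{\,k}\frac{1+\delta-\mathscr P_1^\ast}{1+\delta-\mathscr P_1}=:E_k,\qquad \frac{\mathscr P_{1,2k+1}-\mathscr P_2^\ast}{\mathscr P_{1,2k+1}-\mathscr P_2}=A_2^{\,k}\frac{\mathscr P_{1,1}-\mathscr P_2^\ast}{\mathscr P_{1,1}-\mathscr P_2}=:E_k',
\end{equation*}
i.e.\ $\mathscr P_{1,2k}=\dfrac{E_k\mathscr P_1-\mathscr P_1^\ast}{E_k-1}$, $\mathscr P_{1,2k+1}=\dfrac{E_k'\mathscr P_2-\mathscr P_2^\ast}{E_k'-1}$. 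This rewriting is essential: used literally, \eqref{2.4} would inject an absolute error of size $B_1\cdot O(t)\asymp\tfrac{r+1}r$ from the $O(t)$ relative error of the large quantity $B_1$, whereas here $B_i$ enters only through $1/B_i$ and through $A_i$.

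For the $\mathscr P$-estimates I would use $\mathscr P_{1,0}=1+\delta$ to get $\frac{1+\delta-\mathscr P_1^\ast}{1+\delta-\mathscr P_1}=1+\tfrac{4r}{r+1}t+O(t^2)$, hence $E_k=e^u(1+\tfrac{4r}{r+1}t+O(kt^2))$ with $u:=4kt$. Expanding $e^u-1=u+O(u^2)$ and using $u+\tfrac{4r}{r+1}t=\tfrac{4t(k(r+1)+r)}{r+1}$ and $e^u+1=2(1+O(kt))$ one obtains
\begin{equation*}
E_k-1=\frac{4t(k(r+1)+r)}{r+1}\bigl(1+O((k+1)t)\bigr),\qquad E_k\mathscr P_1-\mathscr P_1^\ast=\frac{2r}{r+1}t\,(e^u+1)\bigl(1+O(t)\bigr),
\end{equation*}
and dividing yields the sharp \emph{relative} estimate $\mathscr P_{1,2k}=\Theta_{1,2k}(r)\bigl(1+O((k+1)t)\bigr)$ for $k\ge1$ (the case $k=0$ being the identity $\mathscr P_{1,0}=1+\delta$). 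The absolute bound then follows from $|\mathscr P_{1,2k}-\Theta_{1,2k}(r)|=\Theta_{1,2k}(r)\,O((k+1)t)=O\bigl(\tfrac{r(k+1)}{k(r+1)+r}t\bigr)=O\bigl(\tfrac r{r+1}t\bigr)$, because $\tfrac{k+1}{k(r+1)+r}\le\tfrac2{r+1}$ when $k\ge1$, and $|\mathscr P_{1,0}-1|=\delta=\tfrac r{r+1}t^2$ when $k=0$. The odd case is identical, starting from $\mathscr P_{1,1}=-\tfrac r{r+1}+O(\delta)$, which gives $\frac{\mathscr P_{1,1}-\mathscr P_2^\ast}{\mathscr P_{1,1}-\mathscr P_2}=1+4t+O(t^2)$, hence $E_k'-1=4t(k+1)(1+O((k+1)t))$ and $\mathscr P_{1,2k+1}=-\Theta_{1,2k+1}(r)(1+O((k+1)t))$; here $\Theta_{1,2k+1}(r)\cdot(k+1)=\tfrac r{r+1}$, so the bound holds for every $k\ge0$.

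For the $q$-estimates, after dividing by $r_1$ the definition \eqref{mu_{1}} reads $\mu_{1,2m-1}=\frac r{r+\delta+\mathscr P_{1,2m-2}}$ and $\mu_{1,2m}=\frac1{1+\delta+|\mathscr P_{1,2m-1}|}$, so $q_{1,2k}=\prod_{m=1}^k\mu_{1,2m-1}\mu_{1,2m}$ and $q_{1,2k+1}=q_{1,2k}\mu_{1,2k+1}$. Substituting the $\mathscr P$-estimates just proved (legitimate since every index occurring is $\le k\le N(\delta)$) and using $r+\Theta\ge r$, $1+\Theta\ge1$ to absorb the $O(\tfrac r{r+1}t)$ errors, I get $\mu_{1,2m-1}\mu_{1,2m}=\theta_m(1+O(t))$ uniformly in $m$, where a one-line computation gives $\theta_m=\frac r{r+\Theta_{1,2m-2}(r)}\cdot\frac1{1+\Theta_{1,2(m-1)+1}(r)}=\frac{(m-1)(r+1)+r}{m(r+1)+r}$; this product telescopes, $\prod_{m=1}^k\theta_m=\frac r{k(r+1)+r}=\Theta_{1,2k}(r)$. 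Since the $k$ factors are each $1+O(t)$ with uniform constant, $\prod_{m=1}^k(1+O(t))=1+O(kt)$, so $q_{1,2k}=\Theta_{1,2k}(r)(1+O(kt))$ and $|q_{1,2k}-\Theta_{1,2k}(r)|\le C\tfrac{rk}{k(r+1)+r}t\le Ct$. Then $\mu_{1,2k+1}=\frac r{r+\Theta_{1,2k}(r)}(1+O(t))=\frac{k(r+1)+r}{(k+1)(r+1)}(1+O(t))$ gives, by the same cancellation, $q_{1,2k+1}=\Theta_{1,2k+1}(r)(1+O((k+1)t))$ and the claimed bound.

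The main obstacle is obtaining the extra factor $\tfrac r{r+1}$ in the $\mathscr P$-estimate: naive error bookkeeping only yields $O(t)=O\bigl((\tfrac{r+1}r\delta)^{1/2}\bigr)$, while the lemma asks for $O\bigl((\tfrac r{r+1}\delta)^{1/2}\bigr)$. The remedy — proving the sharper \emph{relative} statement $\mathscr P_{1,j}=\pm\Theta_{1,j}(r)(1+O((k+1)t))$ and then exploiting $\Theta_{1,j}(r)\cdot(k+1)=O(\tfrac r{r+1})$ — forces one to track throughout not merely the order in $t$ but the exact power of $\tfrac r{r+1}$ (versus $\tfrac{r+1}r$) attached to each error term; this is precisely why the large quantity $B_i$ must be eliminated via the cross-ratio identity rather than used directly, and why the two degenerate cases ($k=0$ in the even branch, and the starting values $\mathscr P_{1,0}$, $\mathscr P_{1,1}$) must be handled separately.
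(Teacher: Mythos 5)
Your proof is correct. Note that the paper itself gives no argument for Lemma \ref{lem3.1} (it is omitted with the remark that it is ``very similar'' to Lemma 4.2 of \cite{LY1}), so there is no internal proof to compare against; your derivation is a self-contained verification from exactly the ingredients the paper quotes, namely \eqref{p_{1}}--\eqref{B_{1}} and \eqref{2.4}--\eqref{2.5}. The organization is your own and is clean: rewriting \eqref{2.4}--\eqref{2.5} as the cross-ratio identity $\frac{\mathscr P_{1,j}-\mathscr P_i^\ast}{\mathscr P_{1,j}-\mathscr P_i}=A_i^{k}\cdot(\text{seed ratio})$ with $\mathscr P_i^\ast=\mathscr P_i-1/B_i$ eliminates the large, only relatively known quantity $B_i$ in favor of $1/B_i$ and $A_i$, whose absolute errors carry the factor $\tfrac r{r+1}$; this is what lets you prove the \emph{relative} statement $\mathscr P_{1,j}=\pm\Theta_{1,j}(r)(1+O((k+1)t))$ and hence recover the sharper bound $C(\tfrac{r\delta}{r+1})^{1/2}$ rather than merely $C(\tfrac{r+1}{r}\delta)^{1/2}$ --- the distinction between the two right-hand sides in \eqref{y_{1,2k}} and \eqref{q_{1,2k}} being the only delicate point of the lemma. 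I checked the key steps: the bound $kt\le C$ for $k\le N(\delta)$ (so $A_i^k=e^{4kt}(1+O(kt^2))$ and $e^u+1=2(1+O(kt))$ are legitimate), the asymptotics of $E_k-1$ and of the numerator (no cancellation, both terms positive), the separate treatment of the even $k=0$ case via $\mathscr P_{1,0}=1+\delta$ (needed, since $\tfrac{k+1}{k(r+1)+r}\le\tfrac2{r+1}$ fails at $k=0$), the telescoping identity $\prod_{m=1}^k\theta_m=\Theta_{1,2k}(r)$, and the absorption of the $\tfrac r{r+1}O(t)$ errors in the $\mu$-factors against $r+\Theta\ge r$ and $1+\Theta\ge1$, which is precisely where the improved $\mathscr P$-estimate is consumed so that the constants stay independent of $r$. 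All of these are sound, and the $q$-bounds you obtain are in fact slightly stronger ($C\tfrac r{r+1}t$) than the stated $Ct$.
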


The proof is very similar with that of Lemma 4.2 in \cite{LY1}. We omit it here.

For a given $\varepsilon>0$, let $N_{0}(\delta)$ and $N_{1}(\delta)$ be as follows:
\begin{equation*}
N_{0}(\delta)=[|\mbox{log}\delta|]
\end{equation*}
and
\begin{equation*}
N_{1}(\delta)=\left[\frac{1}{\delta|\mbox{log}\delta|}\right].
\end{equation*}
Here $[\cdot]$ is the Gaussian bracket. Since $\delta$ is sufficiently small, we have
$$N_{0}(\delta)\ll N(\delta)\ll N_{1}(\delta).$$

We have the following lemma.
\begin{lemma}\label{lem3.2}
\begin{enumerate}[(i)]
\item\label{lem3.2-1}
There exists a positive constant C independent of $r$ and $\delta$ such that
\begin{equation}\label{2.22}
\Big|\sum_{k=0}^{\infty}q_{1, 2k}-\sum_{k=0}^{N(\delta)-1}\Theta_{1,2k}(r)\Big|,\quad\sum_{k=N(\delta)}^{\infty}q_{1, 2k}\leq C,
\end{equation}
and
\begin{equation}\label{2.22'}
\Big|\sum_{k=0}^{\infty}q_{1, 2k+1}-\sum_{k=0}^{N(\delta)-1}\Theta_{1,2k+1}(r)\Big|, \quad\sum_{k=N(\delta)}^{\infty}q_{1, 2k+1}\leq C.
\end{equation}
\item\label{lem3.2-2}
For all $k\geq 0$, we have
\begin{equation}\label{2.26}
\mathscr{P}_{1, 2k}-\mathscr{P}_{1}\geq2\Big(\frac{r\delta}{r+1}\Big)^{1/2}A^{-k}_{1},
\end{equation}
and
$$\mathscr{P}_{1, 2k+1}-\mathscr{P}_{2}\leq-2\Big(\frac{r\delta}{r+1}\Big)^{1/2}A^{-k}_{2},$$
where $A_{i}$ are defined by \eqref{A_{1}}, $i=1,2$.
\item\label{lem3.2-3}
There exists a positive constant C independent of $r$ and $\delta$ such that for all $k\leq N(\delta)$, we have
\begin{equation}\label{2.30}
\mathscr{P}_{1, 2k}-\mathscr{P}_{1}\geq C\frac{r}{r+1}\frac{1}{k},\quad
\mathscr{P}_{1, 2k+1}-\mathscr{P}_{2}\leq-C\frac{r}{r+1}\frac{1}{k}.
\end{equation}
\item\label{lem3.2-4}
\begin{equation*}
0<\mathscr{P}_{1, 2N_{1}(\delta)}-\mathscr{P}_{1}\leq\exp\left({-\big(\frac{r+1}{r}\big)^{1/2}\frac{1}{\sqrt{\delta}|\log \delta|}}\right),
\end{equation*}
and
\begin{equation*}
0>\mathscr{P}_{1, 2N_{1}(\delta)+1}-\mathscr{P}_{2} \geq  -\exp\left({-\big(\frac{r+1}{r}\big)^{1/2}\frac{1}{\sqrt{\delta}|\log \delta|}}\right).
\end{equation*}
\end{enumerate}
\end{lemma}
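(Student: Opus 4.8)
\textbf{Proof proposal for Lemma \ref{lem3.2}.}

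The plan is to exploit the closed-form expressions \eqref{2.4}--\eqref{2.5} for the sequences $\mathscr{P}_{1,2k}$, $\mathscr{P}_{1,2k+1}$ together with the recursion defining $q_{1,j}$, and to split every sum at the threshold $N(\delta)$, controlling the ``head'' $k\le N(\delta)$ by the explicit leading terms $\Theta_{1,j}(r)$ and the ``tail'' $k>N(\delta)$ by geometric decay coming from $A_i>1$. For part \eqref{lem3.2-1}, I would first observe that, by Lemma \ref{lem3.1}, for $k\le N(\delta)$ we have $q_{1,2k}=\Theta_{1,2k}(r)+O\big((\tfrac{r+1}{r}\delta)^{1/2}\big)$; summing this error over the $N(\delta)\sim\delta^{-1/2}$ admissible indices gives a contribution of size $N(\delta)\cdot(\tfrac{r+1}{r}\delta)^{1/2}=O(1)$ uniformly in $r$ — this is exactly why $N(\delta)$ was chosen of order $\delta^{-1/2}$ in \cite{LY1}. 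For the tail $\sum_{k\ge N(\delta)}q_{1,2k}$, I would use part \eqref{lem3.2-2}: since $\mu_{1,l}=r_1/|\mathbf{c}_1-\mathbf{p}_{1,2k-1}|<1$ with a quantitative gap, the product $q_{1,j}$ decays geometrically once $j$ passes $N(\delta)$; comparing with a convergent geometric series bounds the tail by an absolute constant.

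For part \eqref{lem3.2-2}, the idea is purely algebraic: from \eqref{2.4}, $\mathscr{P}_{1,2k}-\mathscr{P}_1=\big((\tfrac{1}{1+\delta-\mathscr{P}_1}+B_1)A_1^k-B_1\big)^{-1}$, and since $\tfrac{1}{1+\delta-\mathscr{P}_1}+B_1\le \tfrac{1}{2}(\tfrac{r\delta}{r+1})^{-1/2}(1+o(1))$ by \eqref{p_{1}} and \eqref{B_{1}}, and $-B_1$ only helps, one gets the lower bound $2(\tfrac{r\delta}{r+1})^{1/2}A_1^{-k}$ after checking the signs and magnitudes carefully; the analogous computation with \eqref{2.5} gives the companion inequality. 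Part \eqref{lem3.2-3} then follows from \eqref{lem3.2-2} by a Bernoulli-type estimate: for $k\le N(\delta)$, $A_1^{-k}=\big(1+4(\tfrac{r+1}{r}\delta)^{1/2}+\cdots\big)^{-k}\ge c$ is bounded below, but more precisely one needs $2(\tfrac{r\delta}{r+1})^{1/2}A_1^{-k}\ge C\tfrac{r}{r+1}\tfrac{1}{k}$; this reduces to checking that $k\sqrt{\delta}\,e^{-4k\sqrt{(r+1)\delta/r}}\gtrsim \tfrac{r}{r+1}\tfrac{1}{\sqrt{(r+1)/r}}$ — wait, more carefully: since $k\le N(\delta)\sim \tfrac{1}{\sqrt\delta}$, the exponent $k\sqrt{(r+1)\delta/r}$ is $O(1)$, so $A_1^{-k}\gtrsim 1$, and then $2(\tfrac{r\delta}{r+1})^{1/2}A_1^{-k}\ge C(\tfrac{r\delta}{r+1})^{1/2}\ge C\tfrac{r}{r+1}\tfrac1k$ precisely when $k\ge c\sqrt{(r+1)/(r\delta)}$, which fails for small $k$; so instead I would derive \eqref{2.30} directly from the explicit formula by noting that the denominator $(\tfrac{1}{1+\delta-\mathscr{P}_1}+B_1)A_1^k-B_1$ is, for $k\le N(\delta)$, comparable to $\tfrac{1}{1+\delta-\mathscr{P}_1}\cdot(1+4k\sqrt{(r+1)\delta/r})\le C(1+k)\tfrac{r+1}{r}$ after using $1+\delta-\mathscr{P}_1\ge c$, giving $\mathscr{P}_{1,2k}-\mathscr{P}_1\ge \tfrac{c r}{(r+1)(1+k)}\ge C\tfrac{r}{r+1}\tfrac1k$. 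Finally part \eqref{lem3.2-4} is the $k=N_1(\delta)$ specialization of \eqref{lem3.2-2}: plug $k=N_1(\delta)=[1/(\delta|\log\delta|)]$ into $2(\tfrac{r\delta}{r+1})^{1/2}A_1^{-k}$, note $A_1^{-N_1}\le\exp(-4N_1\sqrt{(r+1)\delta/r}\,)\le\exp(-c\sqrt{(r+1)/r}\,/(\sqrt\delta|\log\delta|))$, and absorb the polynomial prefactor $\sqrt\delta$ into the exponential.

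The main obstacle I anticipate is keeping all the $r$-dependence uniform, i.e.\ ensuring every constant $C$ is genuinely independent of $r\in(0,1]$ and of $\delta$; the factors $(r+1)/r$ and $\sqrt{r/(r+1)}$ proliferate through \eqref{A_{1}}--\eqref{B_{1}}, and one must track whether they sit inside the exponential (where they help, since $r\le1$ makes $(r+1)/r\ge1$) or as a multiplicative prefactor (where $r/(r+1)$ can be small). Carefully bookkeeping these — and in particular verifying that the tail bound in \eqref{lem3.2-1} and the lower bound in \eqref{lem3.2-3} hold with constants not blowing up as $r\to0$ — is the delicate part; the rest is a direct manipulation of the geometric-type formulas \eqref{2.4}--\eqref{2.5}, entirely parallel to the $r_1=r_2$ analysis in \cite{KLY2} and to Lemma 4.2 of \cite{LY1}.
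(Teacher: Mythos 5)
Your overall strategy coincides with the paper's: approximate the head $k\le N(\delta)$ by $\Theta_{1,j}(r)$ via Lemma \ref{lem3.1}, control the tail through the contraction factors $\mu_{1,j}<1$, and read parts \eqref{lem3.2-2}--\eqref{lem3.2-4} off the closed forms \eqref{2.4}--\eqref{B_{1}}; part \eqref{lem3.2-2} as you describe it is essentially the paper's computation. However, three steps need repair. In the tail of \eqref{lem3.2-1}, the ratio of consecutive even-indexed terms is only $\mu_{1,2k+1}\mu_{1,2k+2}\le \frac{r}{(r+\delta+\mathscr{P}_{1})(1+\delta-\mathscr{P}_{2})}=1-c\sqrt{(r+1)\delta/r}+\cdots$, so the geometric series you invoke sums to $O\big(\sqrt{r/((r+1)\delta)}\big)$, not $O(1)$; the constant bound only emerges after pairing this with the smallness of the first tail term, $q_{1,2N(\delta)}\le \frac{Cr}{N(\delta)(r+1)+r}+C\sqrt{(r+1)\delta/r}\lesssim\sqrt{r\delta/(r+1)}$, which is exactly how the paper argues via \eqref{2.38} together with Lemma \ref{lem3.1}. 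As written, ``comparing with a convergent geometric series'' does not give a constant.

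In part \eqref{lem3.2-3}, the quantity you declare the denominator ``comparable to'', namely $\frac{1}{1+\delta-\mathscr{P}_{1}}\big(1+4k\sqrt{(r+1)\delta/r}\big)$, is $O(1)$ uniformly for $k\le N(\delta)$, whereas the true denominator $\big(\frac{1}{1+\delta-\mathscr{P}_{1}}+B_{1}\big)A_{1}^{k}-B_{1}$ grows linearly in $k$: the growth comes from the cross term $B_{1}(A_{1}^{k}-1)\asymp k\,\frac{r+1}{r}$ (since $B_{1}\asymp\frac14\sqrt{(r+1)/(r\delta)}$ and $A_{1}-1\asymp4\sqrt{(r+1)\delta/r}$), which your displayed expression drops. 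Your final bound ``denominator $\le C(1+k)\frac{r+1}{r}$'' is the correct one --- the paper obtains it by showing $A_{1}^{k}\le 1+C\frac{r+1}{r}k\mathscr{P}_{1}$ for $k\le N(\delta)$ and then using $B_{1}\mathscr{P}_{1}\approx\frac12$ --- but it does not follow from the comparison you wrote, and if that comparison were true it would give the false uniform bound $\mathscr{P}_{1,2k}-\mathscr{P}_{1}\gtrsim\frac{r}{r+1}$. Finally, part \eqref{lem3.2-4} is not a specialization of the lower bound \eqref{lem3.2-2}: you need the opposite inequality $\mathscr{P}_{1,2N_{1}(\delta)}-\mathscr{P}_{1}\le\big[B_{1}(A_{1}^{N_{1}(\delta)}-1)\big]^{-1}$ (obtained by discarding $\frac{1}{1+\delta-\mathscr{P}_{1}}>0$ from the coefficient of $A_1^k$), which is where your $\sqrt\delta$ prefactor actually comes from; also $\log A_{1}\le A_{1}-1$, so replace your estimate by $\log A_{1}\ge\frac12(A_{1}-1)$, which together with $N_{1}(\delta)=[1/(\delta|\log\delta|)]$ still leaves enough slack to absorb the prefactor and reach the exponent with constant $1$, as in the paper.
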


\begin{proof}
We first remark that the following $O(\delta)$ and $O(\sqrt{\delta})$ are independent of $r$.

$(i)$ It follows from \eqref{2.4}--\eqref{B_{1}} that $\mathscr{P}_{1,2k}$ decreases to $\mathscr{P}_{1}$ and $\mathscr{P}_{1,2k+1}$ increases to $\mathscr{P}_{2}$. Hence,
\begin{equation}\label{mu 1 2k}
\mu_{1, 2k}=\frac{r_{1}}{r_{1}+\varepsilon-p_{1, 2k-1}}=\frac{1}{1+\delta-\mathscr{P}_{1, 2k-1}}\leq \frac{1}{1+\delta-\mathscr{P}_{2}} ,\quad\forall~ k\geq 1,
\end{equation}
and
\begin{equation}\label{mu 1 2k+1}
\mu_{1, 2k+1}=\frac{r_{2}}{r_{2}+\varepsilon+p_{1, 2k}}=\frac{r}{r+\delta+\mathscr{P}_{1, 2k}}\leq \frac{r}{r+\delta+\mathscr{P}_{1}},\quad\forall~ k\geq 0.
\end{equation}
For $k\geq m\geq 1$, by using \eqref{q_{1}}, \eqref{mu 1 2k}, and \eqref{mu 1 2k+1}, we have
\begin{align}\label{2.38}
q_{1, 2k}&=q_{1, 2m}\prod_{j=2m+1}^{2k}\mu_{1, j}\leq q_{1, 2m}\left(\frac{r}{r+\delta+\mathscr{P}_{1}}\right)^{k-m}
\left(\frac{1}{1+\delta-\mathscr{P}_{2}}\right)^{k-m}.
\end{align}
Since $\varepsilon$ is sufficiently small, it follows from \eqref{2.38}, \eqref{q_{1,2k}}, and $N(\delta)\simeq\Big(\frac{r}{r+1}\Big)^{1/2}\frac{1}{\sqrt{\delta}}$ that
\begin{align*}
&\sum_{k=N(\delta)}^{\infty}q_{1, 2k}\\
&\leq \sum_{k=N(\delta)}^{\infty}q_{1, 2N(\delta)}\left(\frac{r}{\big(r+\delta+\mathscr{P}_{1}\big)\big(1+\delta-\mathscr{P}_{2}\big)}\right)^{k-N(\delta)}\\
&\leq \left(\frac{r}{N(r+1)+r}+C\Big(\frac{r+1}{r}\delta\Big)^{1/2}\right)
\sum_{k=N(\delta)}^{\infty}\left(\frac{r}{\big(r+\delta+\mathscr{P}_{1}\big)\big(1+\delta-\mathscr{P}_{2}\big)}\right)^{k-N(\delta)}\\
&\leq C,
\end{align*}
and
\begin{equation*}
\left|\sum_{k=0}^{\infty}q_{1, 2k}-\sum_{k=0}^{N(\delta)-1}\Theta_{1,2k}(r)\right|\leq \sum_{k=0}^{N(\delta)-1}\left|q_{1,2k}-\Theta_{1,2k}(r)\right|+\sum_{k=N(\delta)}^{\infty}q_{1, 2k}\leq C.
\end{equation*}
Therefore, \eqref{2.22} is proved. Similarly, we have \eqref{2.22'}.

$(ii)$ It follows from \eqref{2.4}, \eqref{A_{1}}, and \eqref{B_{1}} that
\begin{align*}
\mathscr{P}_{1, 2k}-\mathscr{P}_{1}
&\geq\sqrt{\delta}\left(\Big(\frac{1}{4}\Big(\frac{r+1}{r}\Big)^{1/2}
+\frac{r+1}{r}O(\sqrt{\delta})\Big)A_{1}^{k}\right)^{-1}\\
&\geq 2\Big(\frac{r}{r+1}\delta\Big)^{1/2}A^{-k}_{1}.
\end{align*}
Similarly, we have
\begin{equation*}
\mathscr{P}_{1, 2k+1}-\mathscr{P}_{2}\leq -2\Big(\frac{r}{r+1}\delta\Big)^{1/2}A^{-k}_{2}.
\end{equation*}
Hence, \eqref{2.26} is proved.

$(iii)$ Now, suppose that $k\leq N(\delta)$. Combining
\begin{equation*}
A_{1}=1+2\mathscr{P}_{1}\frac{r+1}{r}+\frac{r+1}{r}O(\delta)\leq1+3\mathscr{P}_{1}\frac{r+1}{r}
\end{equation*}
and the inequality
$$(1+s)^{k}\leq 1+ks+\frac{1}{2}k^{2}s^{2}(1+s)^{k},\quad\forall~s>0,$$
we obtain
\begin{align}\label{2.39}
A_{1}^{k}&\leq\left(1+3\mathscr{P}_{1}\frac{r+1}{r}\right)^{k}\nonumber\\
&\leq1+k\Big(3\mathscr{P}_{1}\frac{r+1}{r}\Big)
+\frac{1}{2}k^{2}\Big(3\mathscr{P}_{1}\frac{r+1}{r}\Big)^{2}\left(1+3\mathscr{P}_{1}\frac{r+1}{r}\right)^{k}.
\end{align}
Using $$k\mathscr{P}_{1}\leq N\mathscr{P}_{1}\leq C\frac{r}{r+1}$$ and the fact that $(1+t)^{\frac{1}{t}}$ converges to $e$ as $t\rightarrow 0^{+}$, we have
$$\left(1+3\mathscr{P}_{1}\frac{r+1}{r}\right)^{k}\leq \exp\left({3k\mathscr{P}_{1}\frac{1+r}{r}}\right)\leq C.$$
Coming back to \eqref{2.39}, there exists some constant C independent of $k\leq N(\delta)$, $r$ and $\delta$, such that
\begin{equation*}
A_{1}^{k}\leq 1+C\frac{r+1}{r}k\mathscr{P}_{1}.
\end{equation*}
We then infer from \eqref{2.4} that
\begin{equation*}
\mathscr{P}_{1,2k}-\mathscr{P}_{1}
=\left(\left(1+O(\sqrt{\delta})+B_{1}\right)A_{1}^{k}-B_{1}\right)^{-1}\geq C\frac{r}{r+1}\frac{1}{k}.
\end{equation*}
Similarly, we get the second inequality of \eqref{2.30}.

$(iv)$ From the definition of $N_{1}(\delta)$, we have
\begin{equation*}
\log A_{1}^{N_{1}(\delta)}=N_{1}(\delta)\log A_{1}\geq \frac{N_{1}(\delta)}{2}(A_{1}-1)
\geq \Big(\frac{r+1}{r}\Big)^{1/2}\frac{1}{\sqrt{\delta}|\log \delta|}.
\end{equation*}
Therefore, we get
$$A_{1}^{N_{1}(\delta)}\geq \exp\left({\big(\frac{r+1}{r}\big)^{1/2}\frac{1}{\sqrt{\delta}|\log \delta|}}\right).$$
We then infer from \eqref{2.4} that
\begin{equation*}
0\leq \mathscr{P}_{1, 2N_{1}(\delta)}-\mathscr{P}_{1}
\leq \frac{1}{(A_{1}^{N_{1}(\delta)}-1)
B_{1}}\leq \exp\left({-\big(\frac{r+1}{r}\big)^{1/2}\frac{1}{\sqrt{\delta}|\log \delta|}}\right).
\end{equation*}
The second inequality in \eqref{lem3.2-4} can be proved similarly. Thus, the proof of Lemma \ref{lem3.2} is finished.
\end{proof}

\begin{remark}\label{rem2.31}
Replacing $r$ by $\frac{1}{r}$ in \eqref{def Theta}, and recalling that $\mathscr{P}_{2,2k}\in\mathfrak{B_{2}}$ and $\mathscr{P}_{2,2k+1}\in\mathfrak{B_{1}}$, we denote
$$\Theta_{2,2k}(r):=\frac{1}{k(r+1)+1},\quad\Theta_{2,2k+1}(r):=\frac{1}{(k+1)(r+1)}.$$
Then, we have
\begin{equation*}
\left|\sum_{k=0}^{\infty}q_{2, 2k}-\sum_{k=0}^{N(\delta)-1}\Theta_{2,2k}(r)\right|\leq C, \quad\sum_{k=N(\delta)}^{\infty}q_{2, 2k}\leq C,
\end{equation*}
and
\begin{equation*}
\left|\sum_{k=0}^{\infty}q_{2, 2k+1}-\sum_{k=0}^{N(\delta)-1}\Theta_{2,2k+1}(r)\right|\leq C, \quad \sum_{k=N(\delta)}^{\infty}q_{2, 2k+1}\leq C.
\end{equation*}
Moreover, a direct calculation gives
\begin{equation}\label{q1 2k 2k+1}
\sum_{k=0}^{\infty}q_{1, 2k}=\frac{1}{2}\frac{r}{r+1}|\log\varepsilon|+O(1),\quad
\sum_{k=0}^{\infty}q_{1, 2k+1}=\frac{1}{2}\frac{r}{r+1}|\log\varepsilon|+O(1),
\end{equation}
\begin{equation}\label{q2 2k 2k+1}
\sum_{k=0}^{\infty}q_{2, 2k}=\frac{1}{2}\frac{1}{r+1}|\log\varepsilon|+O(1),\quad
\sum_{k=0}^{\infty}q_{2, 2k+1}=\frac{1}{2}\frac{1}{r+1}|\log\varepsilon|+O(1).
\end{equation}
Here, $O(1)$ is independent of $\varepsilon$.
\end{remark}

Now, we are ready to prove Proposition \ref{prop Q}.
\subsection{Proof of Proposition \ref{prop Q}.}

\begin{proof}[Proof of Proposition \ref{prop Q}.]
From the definition of $Q_{1}$, we have
\begin{align*}
Q_{1}=\sum^{\infty}_{k=0}(q_{1,2k}-q_{1,2k+1})&=\sum^{N(\delta)-1}_{k=0}(q_{1,2k}-q_{1,2k+1})+\sum^{\infty}_{k=N(\delta)}(q_{1,2k}-q_{1,2k+1})\\
&=:Q_{1}^{N(\delta)}+Q_{1}^{R}.
\end{align*}
For $k\leq N(\delta)$, from \eqref{mu_{1}} and \eqref{y_{1,2k}}, we have
\begin{align}\label{mu_{1, 2k+1}'}
\mu_{1, 2k+1}&=\frac{r}{r+\delta+\mathscr{P}_{1, 2k}}\nonumber\\
&=\frac{k(r+1)+r}{(k+1)(r+1)}\left(1+\Big(r(r+1)\Big)^{-1/2}O(\sqrt{\delta})\right),
\end{align}
Then combining $\eqref{q_{1,2k}}$, \eqref{mu_{1, 2k+1}'}, and
$$\sum^{N(\delta)-1}_{k=0}\frac{1}{k+1}=\frac{1}{2}|\log\varepsilon|+O(1),$$
we conclude that the summations of the first $N(\delta)$-terms of $Q_{1}$ is
\begin{align*}
Q_{1}^{N(\delta)}&=\sum^{N(\delta)-1}_{k=0}q_{1,2k}(1-\mu_{1,2k+1})\\
&=\sum^{N(\delta)-1}_{k=0}\Theta_{1,2k}(r)\Theta_{2,2k+1}(r)+\Big(\frac{r+1}{r}\Big)^{1/2}O\left(|\log\varepsilon|\sqrt{\delta}\right).
\end{align*}
By using the decreasing property of $q_{1,j}$, \eqref{q_{1,2k}}, and $N(\delta)\simeq\Big(\frac{r}{r+1}\Big)^{1/2}\frac{1}{\sqrt{\delta}}$, we have
$$0<Q_{1}^{R}\leq q_{1,2N(\delta)}\leq \frac{Cr}{N(\delta)(r+1)+r},$$
which means that $Q_{1}^{R}$ converges to 0 as $\delta\rightarrow 0$.
Therefore, letting $\delta\rightarrow 0$, we have
\begin{align*}
Q_{1}
=\frac{r}{(r+1)^{2}}\sum^{\infty}_{k=1}\frac{1}{k\big(k-\frac{1}{r+1}\big)}
=-\frac{r}{r+1}\psi\Big(\frac{r}{r+1}\Big).
\end{align*}

Similarly,
$$Q_{2}=-\frac{1}{r+1}\psi\Big(\frac{1}{r+1}\Big).$$
Hence, it follows from \eqref{q1 2k 2k+1} and \eqref{q2 2k 2k+1} that
\begin{align*}
M&=Q_{2}\sum^{\infty}_{k=0}q_{1,2k}+Q_{1}\sum^{\infty}_{k=0}q_{2,2k+1}\\
&=-\frac{1}{2}\frac{r}{(r+1)^{2}}|\log\varepsilon|
\left(\psi\Big(\frac{1}{r+1}\Big)+\psi\Big(\frac{r}{r+1}\Big)\right)
\left(1+\frac{r+1}{r}O(|\log\delta|^{-1})\right).
\end{align*}
The proof of Proposition \ref{prop Q} is completed.
\end{proof}

\begin{remark}
For $r_{1}\geq r_{2}$, we have
\begin{equation*}
0<\frac{r_{2}}{r_{1}+r_{2}}\leq\frac{1}{2}\leq\frac{r_{1}}{r_{1}+r_{2}}<1.
\end{equation*}
Then
\begin{equation*}
-\frac{r_{1}+r_{2}}{r_{2}}\leq\psi\Big(\frac{r_{2}}{r_{1}+r_{2}}\Big)\leq1-\frac{r_{1}+r_{2}}{r_{2}}=-\frac{r_{1}}{r_{2}}.
\end{equation*}
Thus,
\begin{equation}\label{est Q1}
\frac{r_{1}}{r_{1}+r_{2}}\leq Q_{1}\leq1\leq\frac{2r_{1}}{r_{1}+r_{2}}.
\end{equation}
Similarly, we have
\begin{equation}\label{est Q2}
\frac{r_{2}}{r_{1}+r_{2}}\leq Q_{2}\leq\frac{2r_{2}}{r_{1}+r_{2}}.
\end{equation}
The estimates \eqref{est Q1} and \eqref{est Q2} are exactly \eqref{est Qi} in \cite{LY1}.
\end{remark}

\section{Proof of Proposition \ref{prop h}}\label{sec pf prop h}

\subsection{The outline of the proof of Proposition \ref{prop h}}

For $\textbf{x}=(x_{1}, x_{2}, x_{3})$, we denote $\tilde{\textbf{x}}:=\frac{\textbf{x}}{r_{1}}=(\tilde{x}_{1}, \tilde{x}_{2}, \tilde{x}_{3})$, and
$$\alpha_{k}:=\frac{q_{1, 2k}}{|\tilde{\textbf{x}}-{\pmb{\mathscr{P}}}_{1, 2k}|}-\frac{q_{1, 2k+1}}{|\tilde{\textbf{x}}-{\pmb{\mathscr{P}}}_{1, 2k+1}|},\quad\beta_{k}:=\frac{q_{2, 2k+1}}{|\tilde{\textbf{x}}-{\pmb{\mathscr{P}}}_{2, 2k+1}|}-\frac{q_{2, 2k}}{|\tilde{\textbf{x}}-{\pmb{\mathscr{P}}}_{2, 2k}|}.$$
By using Lemma \ref{lem2.1}, we have
\begin{align}\label{v(tilde{x})}
\big(-4\pi M\big)\Big(\frac{Q_{1}}{r_{2}}+\frac{Q_{2}}{r_{1}}\Big)^{-1}h(\textbf{x})
&=\frac{rQ_{2}}{Q_{1}+rQ_{2}}\sum\limits_{k=0}^{\infty}\alpha_{k}+\frac{rQ_{1}}{Q_{1}+rQ_{2}}\sum\limits_{k=0}^{\infty}\beta_{k}\nonumber\\
&=:\frac{rQ_{2}}{Q_{1}+rQ_{2}}v_{1}(\tilde{\textbf{x}})
+\frac{rQ_{1}}{Q_{1}+rQ_{2}}v_{2}(\tilde{\textbf{x}})\nonumber\\
&=:v(\tilde{\textbf{x}}).
\end{align}

From the definition of $v(\tilde{\bf{x}})$, in order to prove Proposition \ref{prop h}, it suffices to establish the asymptotic formula of $\nabla v(\tilde{\bf{x}})$ in $R_{\delta}$. We first give the estimates of $|\partial_{\tilde{x}_{2}}v(\tilde{\bf{x}})|$ and $|\partial_{\tilde{x}_{3}}v(\tilde{\bf{x}})|$, whose proof will be given in Subsection \ref{pf lem rho v} later.
\begin{lemma}\label{lem rho v}
For ${\bf{x}}\in R_{\delta}$, we have
\begin{equation*}
|\partial_{\tilde{x}_{2}} v(\tilde{\bf{x}})|+|\partial_{\tilde{x}_{3}}v(\tilde{\bf{x}})|\leq
C\frac{r+1}{r}\frac{1}{\rho(\tilde{\bf{x}})+\Big(\frac{r\delta}{r+1}\Big)^{1/2}}
\left(1+\log\left(1+\frac{r+1}{r}\frac{\rho(\tilde{\bf{x}})^{2}}{\delta}\right)\right)
\end{equation*}
for some constant C independent of $r$ and $\delta$.
\end{lemma}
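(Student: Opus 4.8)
\textbf{Proof proposal for Lemma \ref{lem rho v}.}

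The plan is to bound $v_1(\tilde{\bf x})$ and $v_2(\tilde{\bf x})$ separately, since $v(\tilde{\bf x})$ is a convex-type combination of the two with coefficients $\frac{rQ_2}{Q_1+rQ_2}$ and $\frac{rQ_1}{Q_1+rQ_2}$ which, by the estimates \eqref{est Q1}--\eqref{est Q2} on $Q_1,Q_2$, are each $O(1)$ in $r$; so it suffices to estimate $|\partial_{\tilde x_2} v_i| + |\partial_{\tilde x_3} v_i|$. Fix $i=1$ (the argument for $i=2$ is identical after replacing $r$ by $1/r$). Differentiating the series for $v_1 = \sum_k \alpha_k$ termwise in the transverse variables $\tilde x_2,\tilde x_3$, each term contributes a quantity of the form $q_{1,2k}\,\frac{\rho(\tilde{\bf x})}{|\tilde{\bf x} - {\pmb{\mathscr P}}_{1,2k}|^3}$ (and similarly with index $2k+1$), using that $\partial_{\tilde x_j}|\tilde{\bf x}-{\pmb{\mathscr P}}|^{-1} = -(\tilde x_j - \mathscr P_j)|\tilde{\bf x}-{\pmb{\mathscr P}}|^{-3}$ and that the poles ${\pmb{\mathscr P}}_{1,j}$ lie on the $x_1$-axis. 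Since the poles are real and on the axis, $|\tilde{\bf x}-{\pmb{\mathscr P}}_{1,2k}|^2 = (\tilde x_1 - \mathscr P_{1,2k})^2 + \rho(\tilde{\bf x})^2$, so the bound reduces to estimating $\sum_k q_{1,2k}\,\rho\,\big((\tilde x_1 - \mathscr P_{1,2k})^2 + \rho^2\big)^{-3/2}$ and its $2k+1$ analogue.

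The key mechanism is that the poles $\mathscr P_{1,2k}$ accumulate at the fixed point $\mathscr P_1$ from one side, with controlled gaps: by Lemma \ref{lem3.2}\eqref{lem3.2-2}--\eqref{lem3.2-3}, $\mathscr P_{1,2k} - \mathscr P_1 \gtrsim \frac{r}{r+1}\frac1k$ for $k\le N(\delta)$ and $\mathscr P_{1,2k}-\mathscr P_1 \gtrsim (\frac{r\delta}{r+1})^{1/2}A_1^{-k}$ for all $k$, while by Lemma \ref{lem3.1} the weights satisfy $q_{1,2k}\approx \Theta_{1,2k}(r) = \frac{r}{k(r+1)+r}$. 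First I would reduce to the worst location: since $\tilde{\bf x}\in R_\delta$ lies near the segment between the balls and the poles $\mathscr P_{1,2k}\in\mathfrak B_1$, $\mathscr P_{1,2k+1}\in\mathfrak B_2$ all lie inside the balls, one has $\tilde x_1 - \mathscr P_{1,2k} \gtrsim \mathscr P_{1,2k} - \mathscr P_1 + (\text{distance of }\tilde{\bf x}\text{ to the axis gap})$; the dominant contribution comes from $\tilde{\bf x}$ close to $\bf c$, i.e. $\tilde x_1 \approx 0$, where $\tilde x_1 - \mathscr P_{1,2k}$ is comparable to $\mathscr P_{1,2k}$ itself, which is $\gtrsim (\frac{r\delta}{r+1})^{1/2}$ for $k=0$ and grows like $\frac{r}{(r+1)k}$. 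Splitting the sum dyadically (or by the ranges $k\lesssim k_0$, $k_0 \lesssim k \lesssim N(\delta)$, $k\gtrsim N(\delta)$, where $k_0 \sim \frac{r+1}{r}\frac{\rho}{\text{(gap scale)}}$ is the index at which the pole gap matches $\rho$), the contributions behave as follows: for small $k$ where the pole separation is below $\rho$, each term is $\lesssim q_{1,2k}\rho^{-2}$ and summing the $\approx k_0$ such terms against $\Theta_{1,2k}(r)$ produces the logarithmic factor $\log(1 + \frac{r+1}{r}\frac{\rho^2}{\delta})$; for the range where the gap exceeds $\rho$, one uses $\big((\tilde x_1-\mathscr P_{1,2k})^2+\rho^2\big)^{-3/2}\le |\tilde x_1 - \mathscr P_{1,2k}|^{-3}$, substitutes the lower bounds on $\mathscr P_{1,2k}-\mathscr P_1$, and sums a convergent series dominated by $C\frac{r+1}{r}(\rho + (\frac{r\delta}{r+1})^{1/2})^{-1}$; the tail $k\ge N(\delta)$ is controlled by the geometric decay in Lemma \ref{lem3.2}\eqref{lem3.2-1}. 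Combining these three pieces, and noting the differences $\alpha_k$ themselves telescope favorably because consecutive poles $\mathscr P_{1,2k},\mathscr P_{1,2k+1}$ sit on opposite sides of the gap, yields the stated bound. The same computation with $r\to 1/r$ gives the $v_2$ estimate, and the weighted combination finishes the proof.

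The main obstacle I anticipate is the bookkeeping of the logarithmic factor: one must extract exactly $\log(1 + \frac{r+1}{r}\frac{\rho^2}{\delta})$ and not a larger power of the log, which forces a careful matching of the cutoff index $k_0$ against the pole-gap estimates of Lemma \ref{lem3.2}, and a uniform-in-$r$ treatment of the transition between the "$\frac1k$ regime" ($k\le N(\delta)$) and the "geometric regime" ($k$ near $N(\delta)$). A secondary technical point is handling the differences $\alpha_k$ rather than the individual terms: naively bounding $|\alpha_k|$ by the sum of its two terms loses nothing here because both terms have the same sign structure after differentiation and comparable magnitude, but one should check that the replacement $q_{1,j}\to\Theta_{1,j}(r)$ and $\mathscr P_{1,j}\to\pm\Theta_{1,j}(r)$, with errors controlled by Lemma \ref{lem3.1}, does not destroy the cancellation — the error terms are $O((\frac{r+1}{r}\delta)^{1/2})$ in the weights and $O((\frac{r\delta}{r+1})^{1/2})$ in the poles, both of which are absorbed into the $(\frac{r\delta}{r+1})^{1/2}$ already present in the denominator of the claimed estimate.
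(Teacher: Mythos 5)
Your proposal founders at its very first reduction. You claim that after termwise differentiation ``the bound reduces to estimating $\sum_k q_{1,2k}\,\tilde{\rho}\,\big((\tilde x_1-\mathscr P_{1,2k})^2+\tilde{\rho}^2\big)^{-3/2}$ and its $2k+1$ analogue,'' i.e.\ you bound $|\partial_{\tilde\rho}\alpha_k|$ by the triangle inequality, and you later assert that this ``loses nothing here.'' It loses everything: that sum does not satisfy the stated estimate. Since $|\tilde x_1|\le\delta+\tilde\rho^2$ is negligible against $\mathscr P_{1,2k}\gtrsim\big(\tfrac{r\delta}{r+1}\big)^{1/2}$, each term is comparable to $q_{1,2k}\,\tilde\rho\,(\mathscr P_{1,2k}^2+\tilde\rho^2)^{-3/2}$ with $q_{1,2k}\simeq\Theta_{1,2k}(r)\simeq\tfrac{r}{(r+1)k}$ and $\mathscr P_{1,2k}\simeq\tfrac{r}{(r+1)k}$; taking $\tilde\rho=\big(\tfrac{r\delta}{r+1}\big)^{1/2}$ and summing over $k\le N(\delta)\simeq\big(\tfrac{r}{r+1}\big)^{1/2}\delta^{-1/2}$ gives a quantity of order $\delta^{-1}$ (for fixed $r$), whereas the right-hand side of the lemma at that value of $\tilde\rho$ is only of order $\big(\tfrac{r+1}{r}\big)^{3/2}\delta^{-1/2}$. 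The discrepancy factor $\sqrt{r/((r+1)\delta)}$ is unbounded as $\delta\to0$, and the same failure occurs already in the tail $k\ge N(\delta)$, where your term-by-term bound gives $\tilde\rho\,(\tilde\rho^2+\tfrac{r\delta}{r+1})^{-3/2}\sum_{k\ge N}q_{1,2k}\sim\tfrac{r+1}{r}\delta^{-1}$. So no choice of cutoff $k_0$ or dyadic bookkeeping can rescue the scheme: the object you propose to estimate is simply too large.

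The missing idea is the cancellation between the two Newtonian poles sitting (almost) symmetrically across the gap, which is exactly what the paper's proof is built on. There one writes $v_1=v_{1,1}+v_{1,2}$, pairing each pole $\mathscr P_{1,2k}$ with the mirror point $-\mathscr P_{1,2k}$ carrying the same weight $q_{1,2k}$. For $v_{1,1}$, the transverse derivative of the paired difference is written as an integral over $[-\tilde x_1,\tilde x_1]$, and the smallness $|\tilde x_1|\le\delta+\tilde\rho^2\le C\tfrac{r+1}{r}(\tilde\rho^2+\mathscr P_{1,2k}^2)$ from \eqref{|tilde{x}|} upgrades the naive $(\tilde\rho^2+\mathscr P_{1,2k}^2)^{-3/2}$ to $\tfrac{r+1}{r}(\tilde\rho^2+\mathscr P_{1,2k}^2)^{-1}$; only after this gain does the $1/k$--summation in \eqref{I} produce the factor $\big(\tilde\rho+(\tfrac{r\delta}{r+1})^{1/2}\big)^{-1}\big(1+\log(1+\tfrac{r+1}{r}\tfrac{\tilde\rho^2}{\delta})\big)$ rather than an extra power of $\tilde\rho^{-1}$ or $\delta^{-1/2}$. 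The mismatch piece $v_{1,2}$ is then controlled using the two near-identities you discarded: $|1-\mu_{1,2k+1}|\le C\,\Theta_{2,2k+1}(r)$ from \eqref{mu_{1, 2k+1}'} (weights nearly equal) and $|\mathscr P_{1,2k}+\mathscr P_{1,2k+1}|\le C\tfrac{r}{r+1}k^{-2}$ from \eqref{y_{1, 2k}+y_{1, 2k+1}} (poles nearly mirror-symmetric), each contributing the crucial extra factor of order $1/k$ per term. Your appeal to Lemmas \ref{lem3.1} and \ref{lem3.2} for the locations and weights is fine as far as it goes, but without restoring this pairing structure (either the paper's mirror decomposition or an equivalent exploitation of the difference structure inside $\alpha_k$) the argument cannot yield the lemma.
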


For the estimate of $\partial_{\tilde{x}_{1}} v(\tilde{\textbf{x}})$, especially the term for $N_{0}(\delta)\leq k\leq N_{1}(\delta)$, is quite involved. In order to obtain the asymptotic formula of $\partial_{\tilde{x}_{1}} v(\tilde{\textbf{x}})$ in the narrow region $R_{\delta}$, we need to study the finer properties of the sequences $\mathscr{P}_{i,j}$ and $q_{i,j}$. The following Lemma is an adaption of Lemma 3.3 in \cite{KLY2}. Its proof is given in the Appendix.

\begin{lemma}\label{lem finer p q}
\begin{enumerate}[(i)]
\item\label{lem3.3-1}
If $N_{0}(\delta)\leq k \leq N_{1}(\delta)$, then
\begin{align}\label{-y_{1, 2k}+y_{1, 2k+2}}
&q_{1, 2k}\Big(\mathscr{P}_{1, 2k}-\mathscr{P}_{1, 2k+2}\Big)^{-\frac{1}{r+1}}\Big(\mathscr{P}_{1, 2k+3}-\mathscr{P}_{1, 2k+1}\Big)^{-\frac{r}{r+1}}\nonumber\\
=&\left(\frac{r}{r+1}+\frac{r+1}{r}O(|\log\delta|^{-1})\right)\Big(\mathscr{P}_{1, 2k}^{2}-\mathscr{P}_{1}^{2}\Big)^{-\frac{1}{2(r+1)}}\Big(\mathscr{P}_{1, 2k+1}^{2}-\mathscr{P}_{2}^{2}\Big)^{-\frac{r}{2(r+1)}},
\end{align}
where $O(|\log\delta|^{-1})$ is independent of $k$ and $r$.
\item\label{lem3.3-2}
There are positive constants $C_{1}$ and $C_{2}$ such that for all $k\geq N_{1}(\delta)$,
\begin{align}\label{q_{1, 2k}leq e}
q_{1, 2k}\leq C_{1}&\left(\frac{r}{r+\delta+\mathscr{P}_{1}}\right)^{k-N_{1}(\delta)}
\left(\frac{1}{1+\delta-\mathscr{P}_{2}}\right)^{k-N_{1}(\delta)}\nonumber\\
&\cdot\exp\left(-C_{2}\big(\frac{r}{r+1}\big)^{1/2}\frac{1}{\sqrt{\delta}|\log \delta|}\right).
\end{align}
\end{enumerate}
\end{lemma}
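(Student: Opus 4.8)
\textbf{Proof proposal for Lemma \ref{lem finer p q}.}

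The plan is to reduce both parts to the explicit representations \eqref{2.4}--\eqref{B_{1}} of the sequence $\mathscr{P}_{1,j}$ together with the elementary estimates collected in Lemma \ref{lem3.2}. For part \eqref{lem3.3-1}, I would first compute the two consecutive differences appearing on the left. From \eqref{2.4} one has
\[
\mathscr{P}_{1,2k}-\mathscr{P}_{1,2k+2}=\left(\tfrac{1}{1+\delta-\mathscr{P}_1}+B_1\right)^{-1}\frac{A_1^{k}(A_1-1)}{\bigl(A_1^{k}-\widetilde B_1\bigr)\bigl(A_1^{k+1}-\widetilde B_1\bigr)},
\]
where $\widetilde B_1$ is $B_1$ normalized by the same prefactor; a similar identity holds for $\mathscr{P}_{1,2k+3}-\mathscr{P}_{1,2k+1}$ coming from \eqref{2.5} with $A_2,B_2$. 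On the other hand, $\mathscr{P}_{1,2k}^2-\mathscr{P}_1^2=(\mathscr{P}_{1,2k}-\mathscr{P}_1)(\mathscr{P}_{1,2k}+\mathscr{P}_1)$ and $\mathscr{P}_{1,2k}-\mathscr{P}_1=\bigl((1+O(\sqrt\delta)+B_1)A_1^k-B_1\bigr)^{-1}$ again by \eqref{2.4}. In the regime $N_0(\delta)\le k\le N_1(\delta)$, Lemma \ref{lem3.2}\eqref{lem3.2-2} and \eqref{lem3.2-3} give $A_1^k\gg 1$ (indeed $A_1^{k}\ge A_1^{N_0(\delta)}\to\infty$), so in every ratio the subtracted constants $B_1,\widetilde B_1$ are lower-order; after cancelling the common $A_1^k$ factors one is left with an identity of the schematic form (difference) $\sim$ (const)$\cdot A_1^{-k}(A_1-1)$ and ($\mathscr{P}_{1,2k}^2-\mathscr{P}_1^2$)$\sim$ (const)$\cdot \mathscr{P}_1 A_1^{-k}$. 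Plugging $A_1-1=4(\tfrac{r+1}{r}\delta)^{1/2}+\cdots$ and $\mathscr{P}_1=2(\tfrac{r\delta}{r+1})^{1/2}+\cdots$ from \eqref{p_{1}} and \eqref{A_{1}}, the powers of $A_1^{-k}$ on the two sides match because $\tfrac{1}{r+1}+\tfrac{r}{r+1}=1$, and the explicit $\delta$-powers combine to produce the prefactor $\tfrac{r}{r+1}$ together with the claimed relative error $\tfrac{r+1}{r}O(|\log\delta|^{-1})$. The factor $q_{1,2k}$ on the left is handled by Lemma \ref{lem3.1}: $q_{1,2k}=\Theta_{1,2k}(r)+O((\tfrac{r+1}{r}\delta)^{1/2})$, and since $k\ge N_0(\delta)=[|\log\delta|]$ one has $\Theta_{1,2k}(r)=\tfrac{r}{k(r+1)+r}\le \tfrac{C}{|\log\delta|}\cdot\tfrac{r}{r+1}$, which is exactly the size that gets absorbed into the $O(|\log\delta|^{-1})$ term.

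For part \eqref{lem3.3-2}, I would iterate the bounds \eqref{mu 1 2k}--\eqref{mu 1 2k+1} exactly as in the proof of Lemma \ref{lem3.2}\eqref{lem3.2-1}, but stopping at $N_1(\delta)$ instead of $N(\delta)$: for $k\ge N_1(\delta)$,
\[
q_{1,2k}\le q_{1,2N_1(\delta)}\left(\frac{r}{r+\delta+\mathscr{P}_1}\right)^{k-N_1(\delta)}\left(\frac{1}{1+\delta-\mathscr{P}_2}\right)^{k-N_1(\delta)}.
\]
It then remains to bound $q_{1,2N_1(\delta)}$ itself by $C\exp\!\bigl(-C_2(\tfrac{r}{r+1})^{1/2}\tfrac{1}{\sqrt\delta|\log\delta|}\bigr)$. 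This is where Lemma \ref{lem3.2}\eqref{lem3.2-4} enters: writing $q_{1,2N_1(\delta)}=\prod_{l}\mu_{1,l}$ and using $\mu_{1,2k+1}=\tfrac{r}{r+\delta+\mathscr{P}_{1,2k}}\le \tfrac{r}{r+\delta+\mathscr{P}_1}$ with $\mathscr{P}_{1,2k}-\mathscr{P}_1\ge 2(\tfrac{r\delta}{r+1})^{1/2}A_1^{-k}$ from \eqref{2.26}, each factor is strictly less than $1$ by an amount controlled by $A_1^{-k}$; summing $\sum_{k\le N_1(\delta)}\log(1+cA_1^{-k}\cdot\text{stuff})$ and comparing with $\log A_1^{N_1(\delta)}\ge(\tfrac{r+1}{r})^{1/2}\tfrac{1}{\sqrt\delta|\log\delta|}$ (the estimate established inside the proof of Lemma \ref{lem3.2}\eqref{lem3.2-4}) yields the exponential decay with the stated constant. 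Alternatively, and more cleanly, one can bound $q_{1,2N_1(\delta)}\le C\,\mathscr{P}_{1,2N_1(\delta)}$ using \eqref{q_{1,2k}}-type comparison between $q_{1,2k}$ and $\Theta_{1,2k}(r)$ extended past $N(\delta)$, and then invoke Lemma \ref{lem3.2}\eqref{lem3.2-4} directly to get $\mathscr{P}_{1,2N_1(\delta)}-\mathscr{P}_1\le \exp(-(\tfrac{r+1}{r})^{1/2}\tfrac{1}{\sqrt\delta|\log\delta|})$, noting $\mathscr{P}_1$ is itself $O(\sqrt\delta)$ and hence negligible on the exponential scale.

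The main obstacle I anticipate is bookkeeping the error terms in part \eqref{lem3.3-1}: one must verify that every place where $A_1^k$ or $A_2^k$ is replaced by its leading behaviour, and every place where $B_1,B_2$ are discarded, the incurred relative error is genuinely $O(|\log\delta|^{-1})$ \emph{uniformly in $k\in[N_0(\delta),N_1(\delta)]$ and in $r$}, not merely $o(1)$. The delicate point is that the upper end $k= N_1(\delta)$ makes $A_1^{-k}$ as small as $\exp(-c/(\sqrt\delta|\log\delta|))$ while the lower end $k=N_0(\delta)$ only gives $A_1^{-k}\approx \delta^{O(1)}$; one has to check that the \emph{ratio} identity is insensitive to this huge dynamic range, which it is precisely because the $A_1^k$ factors cancel between numerator and denominator before any estimate is applied. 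Keeping that cancellation exact, and only afterwards expanding $A_1-1$, $\mathscr{P}_1$, and $q_{1,2k}$, is the key to a clean argument; this is also why the lemma is stated as an exact algebraic identity with a multiplicative error factor rather than an additive estimate. I expect the rest — part \eqref{lem3.3-2} — to be a routine iteration, essentially a rerun of the computation already carried out for \eqref{2.22} with $N(\delta)$ upgraded to $N_1(\delta)$.
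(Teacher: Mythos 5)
Your plan for part \eqref{lem3.3-1} has a genuine gap, and it is precisely at the point you flag as "bookkeeping": the treatment of the factor $q_{1,2k}$. First, the comparison $q_{1,2k}=\Theta_{1,2k}(r)+O((\tfrac{r+1}{r}\delta)^{1/2})$ from Lemma \ref{lem3.1} is only available for $k\le N(\delta)\simeq (\tfrac{r}{r+1})^{1/2}\delta^{-1/2}$, whereas \eqref{-y_{1, 2k}+y_{1, 2k+2}} must hold up to $k\le N_{1}(\delta)\simeq(\delta|\log\delta|)^{-1}\gg N(\delta)$; beyond $N(\delta)$ the sequence $q_{1,2k}$ decays geometrically and is in no sense close to $\tfrac{r}{k(r+1)+r}$. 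Second, and more fundamentally, $q_{1,2k}$ cannot be "absorbed into the $O(|\log\delta|^{-1})$ term": the error in \eqref{-y_{1, 2k}+y_{1, 2k+2}} is a \emph{relative} (multiplicative) correction to the constant $\tfrac{r}{r+1}$, while dropping $q_{1,2k}$ changes the left-hand side by a factor ranging from $\sim|\log\delta|^{-1}$ at $k=N_{0}(\delta)$ down to exponentially small at $k=N_{1}(\delta)$. Indeed, with $\mathscr{P}_{1,2k}-\mathscr{P}_{1,2k+2}\sim c\,\delta A_{1}^{-k}$ and $\mathscr{P}_{1,2k}^{2}-\mathscr{P}_{1}^{2}\sim c\,\delta A_{1}^{-k}$ (and the analogous statements with $A_{2}$), the two sides \emph{without} $q_{1,2k}$ carry $A_{1}^{k/(r+1)}A_{2}^{kr/(r+1)}$ versus $A_{1}^{k/(2(r+1))}A_{2}^{kr/(2(r+1))}$: the powers do not match, contrary to your "$\tfrac{1}{r+1}+\tfrac{r}{r+1}=1$" cancellation claim, and the unmatched factor is as large as $\exp(c/(\sqrt{\delta}|\log\delta|))$ at $k=N_{1}(\delta)$. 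The entire content of part \eqref{lem3.3-1} is that the geometric decay of $q_{1,2k}$ supplies exactly the missing half-powers, and this requires a quantitative representation of $q_{1,2k}$ itself: the paper obtains it by writing $\log q_{1,2k}=\sum\log\mu_{1,j}$, using $|\log(1+t)-t|\le Ct^{2}$, comparing the resulting sums of $\mathscr{P}_{1,2m}-\mathscr{P}_{1}$ and $\mathscr{P}_{2}-\mathscr{P}_{1,2m+1}$ with integrals of $f_{i}(x)=A_{i}^{-x}/(G_{i}-B_{i}A_{i}^{-x})$, and expanding $\tfrac{1}{B_{i}\log A_{i}}=\pm\tfrac{r}{r+1}+O(\sqrt{\delta})$, which yields \eqref{q_{1, 2k}q_{1, 2m}}; the exponential prefactor $\exp(k(\mathscr{P}_{2}-\tfrac{1}{r}\mathscr{P}_{1}))$ is then converted into $A_{1}^{-k/(2(r+1))}A_{2}^{-kr/(2(r+1))}$ with errors $O(k\delta)$, and only after this do the $A_{i}^{k}$ factors cancel. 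No ingredient of this kind appears in your proposal, so the uniform-in-$k$ cancellation you invoke at the end is not available.

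For part \eqref{lem3.3-2}, the reduction via \eqref{2.38} to bounding $q_{1,2N_{1}(\delta)}$ is fine and matches the paper's first step, and your first route can be made to work, but in a simpler form than you describe: the per-factor gap that matters is the \emph{uniform} one coming from the fixed points, $\mu_{1,2k+1}\le\tfrac{r}{r+\delta+\mathscr{P}_{1}}$ and $\mu_{1,2k}\le\tfrac{1}{1+\delta-\mathscr{P}_{2}}$, giving a gap of order $\sqrt{\delta(r+1)/r}$ per pair and hence $q_{1,2N_{1}(\delta)}\le\exp(-cN_{1}(\delta)\sqrt{\delta(r+1)/r})$, which is the stated exponential; the $A_{1}^{-k}$ refinement from \eqref{2.26} is useless here since $\sum_{k}\sqrt{\delta}A_{1}^{-k}=O(1)$. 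Your "cleaner" alternative fails: Lemma \ref{lem3.2} \eqref{lem3.2-4} makes $\mathscr{P}_{1,2N_{1}(\delta)}-\mathscr{P}_{1}$ exponentially small, but $\mathscr{P}_{1,2N_{1}(\delta)}$ itself is $\approx\mathscr{P}_{1}\approx 2(\tfrac{r\delta}{r+1})^{1/2}$, so a bound $q_{1,2N_{1}(\delta)}\le C\mathscr{P}_{1,2N_{1}(\delta)}$ only gives $C\sqrt{\delta}$, far from $\exp(-C_{2}(\tfrac{r}{r+1})^{1/2}\tfrac{1}{\sqrt{\delta}|\log\delta|})$, and a comparison of $q_{1,2k}$ with $\Theta_{1,2k}(r)$ "extended past $N(\delta)$" is false. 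Note also that the paper itself deduces \eqref{q_{1, 2k}leq e} directly from the representation \eqref{q_{1, 2k}q_{1, 2m}} built for part \eqref{lem3.3-1}, so repairing part (i) would in any case hand you part (ii) for free.
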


Consider the following two auxiliary functions:
\begin{align*}
f_{1}(t)&:=\left(|\tilde{\textbf{x}}-(t, 0, 0)|^{-1}-|\tilde{\textbf{x}}+(t, 0, 0)|^{-1}\right)\frac{1}{\sqrt{t^{2}-\mathscr{P}_{1}^{2}}},\\
f_{2}(t)&:=\left(|\tilde{\textbf{x}}-(t, 0, 0)|^{-1}-|\tilde{\textbf{x}}+(t, 0, 0)|^{-1}\right)\frac{1}{\sqrt{t^{2}-\mathscr{P}_{2}^{2}}}.
\end{align*}
Define
\begin{equation*}
v_{1}^{0}(\tilde{\textbf{x}}):=\int_{\mathscr{P}_{1}}^{1}f_{1}(t)\ dt,\quad v_{2}^{0}(\tilde{\textbf{x}}):=\int_{-1}^{\mathscr{P}_{2}}f_{2}(t)\ dt.
\end{equation*}
Here, $\pmb{\mathscr{P}}_{1}=(\mathscr{P}_{1}, 0, 0)$ and $\pmb{\mathscr{P}}_{2}=(\mathscr{P}_{2}, 0, 0)$ are the fixed points of combined reflection $R_{1}R_{2}$ and $R_{2}R_{1}$, respectively. We obtain the following two lemmas, whose proofs will be given in Subsection \ref{pf lem x1 v0} and \ref{pf lem x1 v}, respectively.
\begin{lemma}\label{lem x1 v0}
For ${\bf{x}}\in R_{\delta}$, we have
\begin{equation*}
\partial_{\tilde{x}_{1}}v_{1}^{0}(\tilde{\bf{x}})=\frac{2}{\frac{4r}{r+1}\delta+\rho(\tilde{\bf{x}})^{2}}\left(1+\frac{r+1}{r}O(|\log\delta|^{-1})\right)
\end{equation*}
and
\begin{equation*}
\partial_{\tilde{x}_{1}}v_{2}^{0}(\tilde{\bf{x}})=-\frac{2}{\frac{4r}{r+1}\delta+\rho(\tilde{\bf{x}})^{2}}\left(1+\frac{r+1}{r}O(|\log\delta|^{-1})\right).
\end{equation*}
\end{lemma}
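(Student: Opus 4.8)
\textbf{Proof proposal for Lemma \ref{lem x1 v0}.}

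The plan is to compute $\partial_{\tilde x_1} v_1^0(\tilde{\bf x})$ directly from the integral representation and then extract its leading term on $R_\delta$, where $\rho(\tilde{\bf x})$ is small and $\mathscr{P}_1 = 2(r\delta/(r+1))^{1/2} + O(\delta)$ by \eqref{p_{1}}. Writing $\tilde{\bf x} = (\tilde x_1, \rho\,\omega)$ with $\rho = \rho(\tilde{\bf x})$ and $|\omega|=1$ in the $(x_2,x_3)$-plane, the integrand of $v_1^0$ is $\big(|\tilde{\bf x} - (t,0,0)|^{-1} - |\tilde{\bf x}+(t,0,0)|^{-1}\big)(t^2 - \mathscr{P}_1^2)^{-1/2}$. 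Differentiating in $\tilde x_1$ under the integral sign gives
\begin{equation*}
\partial_{\tilde x_1} v_1^0(\tilde{\bf x}) = \int_{\mathscr{P}_1}^1 \left(\frac{t - \tilde x_1}{|\tilde{\bf x}-(t,0,0)|^{3}} + \frac{t + \tilde x_1}{|\tilde{\bf x}+(t,0,0)|^{3}}\right)\frac{dt}{\sqrt{t^2 - \mathscr{P}_1^2}},
\end{equation*}
and since $|\tilde{\bf x} \mp (t,0,0)|^2 = (\tilde x_1 \mp t)^2 + \rho^2$, both bracketed terms are positive once $t > \mathscr{P}_1 \gg |\tilde x_1|$ (recall $\tilde x_1$ lies in the thin gap, so $|\tilde x_1| \lesssim \delta \ll \sqrt\delta \simeq \mathscr{P}_1$). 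This makes the integrand genuinely singular only near the endpoint $t = \mathscr{P}_1$, and it is this endpoint contribution that produces the claimed main term $2\big(\tfrac{4r}{r+1}\delta + \rho^2\big)^{-1}$.

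The key steps, in order: (i) Split the integral at a scale $t = \sigma$ with $\mathscr{P}_1 \ll \sigma \ll 1$, say $\sigma$ a small fixed constant, so that on $[\sigma,1]$ the integrand is bounded and the integral contributes only $O(1)$, which is absorbed into the error since the main term is of size $\delta^{-1}$ (on $R_\delta$ one has $\rho^2 \lesssim |\log\delta|^{-4} \ll 1$, so $\big(\tfrac{4r}{r+1}\delta+\rho^2\big)^{-1} \gtrsim 1$, and in fact after multiplying through the relative error is $O(|\log\delta|^{-1})$ — I will need to check the bookkeeping here carefully, possibly choosing $\sigma$ depending on $\delta$). (ii) On $[\mathscr{P}_1, \sigma]$, approximate $\tilde x_1 \approx 0$ up to an error controlled by $|\tilde x_1| \lesssim \delta$, reducing to $\int_{\mathscr{P}_1}^{\sigma} \frac{2t}{(t^2+\rho^2)^{3/2}}\frac{dt}{\sqrt{t^2-\mathscr{P}_1^2}}$ plus lower-order terms. (iii) Evaluate this model integral: the substitution $s = t^2 - \mathscr{P}_1^2$ turns it into $\int_0^{\sigma^2 - \mathscr{P}_1^2} (s + \mathscr{P}_1^2 + \rho^2)^{-3/2} s^{-1/2}\,ds$, which is elementary — it equals $\frac{2}{\mathscr{P}_1^2 + \rho^2}\cdot\frac{\sqrt{s}}{\sqrt{s+\mathscr{P}_1^2+\rho^2}}\Big|_0^{\sigma^2-\mathscr{P}_1^2} = \frac{2}{\mathscr{P}_1^2+\rho^2}\big(1 + O(\mathscr{P}_1^2/\sigma^2) + O(\rho^2/\sigma^2)\big)$. (iv) Insert $\mathscr{P}_1^2 = \tfrac{4r}{r+1}\delta + O(\delta^{3/2})$ from \eqref{p_{1}}, so that $\mathscr{P}_1^2 + \rho^2 = \tfrac{4r}{r+1}\delta + \rho^2$ up to a relative error $O(\sqrt\delta)$, and collect all errors into the stated $\frac{r+1}{r}O(|\log\delta|^{-1})$ factor. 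The computation for $v_2^0$ is identical with $\mathscr{P}_1$ replaced by $\mathscr{P}_2$ (note $\mathscr{P}_2^2 = \mathscr{P}_1^2$ to leading order by \eqref{p_{1}}) and an overall sign change coming from the orientation of the interval $[-1,\mathscr{P}_2]$, i.e.\ from $t<0$.

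The main obstacle I anticipate is step (i)–(iv) error bookkeeping: I must verify that every discarded piece — the $O(1)$ tail on $[\sigma,1]$, the $O(\mathscr{P}_1^2/\sigma^2 + \rho^2/\sigma^2)$ from the model integral, the $|\tilde x_1| \lesssim \delta$ correction from dropping $\tilde x_1$, and the $O(\delta^{3/2})$ from the expansion of $\mathscr{P}_1^2$ — is, \emph{after} factoring out the main term $2\big(\tfrac{4r}{r+1}\delta+\rho^2\big)^{-1}$, of relative size $O(|\log\delta|^{-1})$ uniformly for ${\bf x}\in R_\delta$ and uniformly in $r$. The tail term is the delicate one: relative to the main term it is $O\big(\delta + \rho^2\big)\cdot\tfrac{r+1}{r}$, which is far better than $O(|\log\delta|^{-1})$ when $\rho$ is genuinely small but must be checked at the outer edge $\rho \simeq r_1|\log\delta|^{-2}$ where it is $O(|\log\delta|^{-4})\cdot\tfrac{r+1}{r}$ — still fine. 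Keeping the factor $\tfrac{r+1}{r}$ correctly attached to each error (rather than a bare constant) is the bookkeeping subtlety, since $r = r_2/r_1$ may be small; this is why the statement carries $\tfrac{r+1}{r}O(|\log\delta|^{-1})$ rather than $O(|\log\delta|^{-1})$.
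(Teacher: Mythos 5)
Your proposal is correct and follows essentially the same route as the paper's proof: differentiate under the integral sign, split at an intermediate scale (the paper cuts at $t=|\log\delta|^{-1}$ rather than a fixed $\sigma$), discard the outer piece as a crude $O(|\log\delta|^{2})$-type term, drop $\tilde{x}_{1}$ on the inner interval, and evaluate exactly the same model integral with the same antiderivative before inserting $\mathscr{P}_{1}^{2}=\frac{4r}{r+1}\delta+O(\delta^{3/2})$ from \eqref{p_{1}}. One small correction to your bookkeeping: by \eqref{|tilde{x}|} one only has $|\tilde{x}_{1}|\leq\delta+\rho(\tilde{\bf{x}})^{2}$, not $|\tilde{x}_{1}|\lesssim\delta\ll\mathscr{P}_{1}$ (near the outer edge $\rho(\tilde{\bf{x}})\simeq|\log\delta|^{-2}$ the term $\rho^{2}$ dominates $\sqrt{\delta}$), but this does not break your argument, since the resulting relative error in dropping $\tilde{x}_{1}$ is still $O(\rho(\tilde{\bf{x}}))+\frac{r+1}{r}O(\sqrt{\delta})$, well within the stated $\frac{r+1}{r}O(|\log\delta|^{-1})$ budget—this is exactly the estimate the paper packages as \eqref{|t tilde{x}|}--\eqref{|textbf{x}pm(t, 0, 0)|^{3}} with its $\delta$-dependent cut.
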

Then
\begin{lemma}\label{lem x1 v}
For ${\bf{x}}\in R_{\delta}$, we have
\begin{align*}
\partial_{\tilde{x}_{1}}v(\tilde{\bf{x}})
&=\frac{rQ_{2}}{Q_{1}+rQ_{2}}\left(\frac{r}{r+1}+\frac{r+1}{r}O(|\log \delta|^{-1})\right)\partial_{\tilde{x}_{1}}v_{1}^{0}(\tilde{\bf{x}})\nonumber\\
&\quad-\frac{Q_{1}}{Q_{1}+rQ_{2}}\left(\frac{r}{r+1}+\frac{r+1}{r}O(|\log \delta|^{-1})\right)\partial_{\tilde{x}_{1}}v_{2}^{0}(\tilde{\bf{x}}).
\end{align*}
\end{lemma}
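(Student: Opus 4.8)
\textbf{Proof proposal for Lemma \ref{lem x1 v}.}

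The plan is to express $\partial_{\tilde x_1}v(\tilde{\textbf{x}})$ using the decomposition $v = \frac{rQ_2}{Q_1+rQ_2}v_1 + \frac{rQ_1}{Q_1+rQ_2}v_2$ from \eqref{v(tilde{x})}, and then to compare the discrete sums $\partial_{\tilde x_1}v_1(\tilde{\textbf{x}}) = \sum_k \partial_{\tilde x_1}\alpha_k$ and $\partial_{\tilde x_1}v_2(\tilde{\textbf{x}}) = \sum_k \partial_{\tilde x_1}\beta_k$ against the integrals $\partial_{\tilde x_1}v_1^0(\tilde{\textbf{x}})$ and $\partial_{\tilde x_1}v_2^0(\tilde{\textbf{x}})$. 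Since everything is symmetric between $i=1$ and $i=2$, I would carry out the argument for $v_1$ in detail and only indicate the changes for $v_2$. The key is to split the series $\sum_k \partial_{\tilde x_1}\alpha_k$ into three ranges: $0\le k\le N_0(\delta)$, $N_0(\delta)\le k\le N_1(\delta)$, and $k\ge N_1(\delta)$, matching the ranges in Lemma \ref{lem3.2} and Lemma \ref{lem finer p q}.

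First I would treat the tail $k\ge N_1(\delta)$: using the bound \eqref{q_{1, 2k}leq e} on $q_{1,2k}$ (and the analogous bound on $q_{1,2k+1}$), together with the fact that $|\partial_{\tilde x_1}|\tilde{\textbf{x}}-{\pmb{\mathscr P}}_{1,j}|^{-1}|$ is controlled on $R_\delta$ (the points ${\pmb{\mathscr P}}_{1,j}$ are at distance $\gtrsim \rho(\tilde{\textbf{x}})+(\frac{r\delta}{r+1})^{1/2}$ in the transverse directions for $j\le 2N_1(\delta)$, and for larger $j$ they are essentially at $\pmb{\mathscr P}_1,\pmb{\mathscr P}_2$), the exponential factor $\exp(-C_2(\frac r{r+1})^{1/2}\delta^{-1/2}|\log\delta|^{-1})$ makes this contribution negligible compared to the main term $\frac{r}{r+1}\partial_{\tilde x_1}v_1^0 \sim \frac{r}{r+1}\cdot\frac{2}{\frac{4r}{r+1}\delta+\rho^2}$ from Lemma \ref{lem x1 v0}; the same estimate shows the integral $v_1^0$ restricted to $t$ beyond $\mathscr P_{1,2N_1(\delta)}$ is negligible. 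For the range $N_0(\delta)\le k\le N_1(\delta)$, I would recognize each $\partial_{\tilde x_1}\alpha_k$ as (approximately) a Riemann-type increment of the integrand $f_1$: using Lemma \ref{lem3.2}(ii)--(iv) and the identity \eqref{-y_{1, 2k}+y_{1, 2k+2}}, which is precisely designed to rewrite $q_{1,2k}$ times a product of gap powers as $\frac{r}{r+1}$ times $(\mathscr P_{1,2k}^2-\mathscr P_1^2)^{-\frac1{2(r+1)}}(\mathscr P_{1,2k+1}^2-\mathscr P_2^2)^{-\frac r{2(r+1)}}$; after interpolating the weights $q_{1,2k}$ and $q_{1,2k+1}$ between consecutive image points one sees that $\sum_{N_0}^{N_1}\partial_{\tilde x_1}\alpha_k$ equals $\frac{r}{r+1}$ times a Riemann sum for $\int f_1\,dt$ over the corresponding $t$-interval, up to a relative error $\frac{r+1}{r}O(|\log\delta|^{-1})$. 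Finally, the head $0\le k\le N_0(\delta)$ contributes only $O(1)$ (the terms are bounded and there are $O(|\log\delta|)$ of them, while the main term is of order $\delta^{-1}$ or $\rho^{-2}$), which is absorbed into the $O(|\log\delta|^{-1})$ relative error after dividing by $\partial_{\tilde x_1}v_1^0$.

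Assembling the three ranges gives $\partial_{\tilde x_1}v_1(\tilde{\textbf{x}}) = \left(\frac{r}{r+1}+\frac{r+1}{r}O(|\log\delta|^{-1})\right)\partial_{\tilde x_1}v_1^0(\tilde{\textbf{x}})$, and similarly $\partial_{\tilde x_1}v_2(\tilde{\textbf{x}}) = -\left(\frac{r}{r+1}+\frac{r+1}{r}O(|\log\delta|^{-1})\right)\partial_{\tilde x_1}v_2^0(\tilde{\textbf{x}})$ (the sign coming from the orientation of $\beta_k$ versus $\alpha_k$ and the fact that $v_2^0$ integrates from $-1$ to $\mathscr P_2$); plugging these into \eqref{v(tilde{x})} yields the claimed formula. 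I expect the main obstacle to be the middle range $N_0(\delta)\le k\le N_1(\delta)$: one must show that the discrete sum of $\partial_{\tilde x_1}\alpha_k$ is genuinely close to the integral $\frac{r}{r+1}\partial_{\tilde x_1}v_1^0$ with a controlled \emph{relative} (not just absolute) error, which requires careful bookkeeping of how the gaps $\mathscr P_{1,2k}-\mathscr P_{1,2k+2}$, the weights $q_{1,2k}$, and the evaluation points of $f_1$ interlock — this is exactly where identity \eqref{-y_{1, 2k}+y_{1, 2k+2}} and the pointwise estimates of Lemma \ref{lem3.2} do the heavy lifting, and where the uniformity of the $O(|\log\delta|^{-1})$ in $k$ and $r$ must be checked.
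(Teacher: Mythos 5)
Your proposal is correct and takes essentially the same route as the paper: the same decomposition of $v$ into $v_{1},v_{2}$, the same three-range split at $N_{0}(\delta)$ and $N_{1}(\delta)$, the exponential tail bound \eqref{q_{1, 2k}leq e} for $k\geq N_{1}(\delta)$, and the identification of the middle range as $\frac{r}{r+1}$ times a Riemann sum for $\partial_{\tilde{x}_{1}}f_{1}$ via \eqref{-y_{1, 2k}+y_{1, 2k+2}} together with Lemma \ref{lem3.2}, everything then being divided by the main term from Lemma \ref{lem x1 v0}. The only (harmless) imprecision is in the head range: the terms $\partial_{\tilde{x}_{1}}\alpha_{k}$ for $k\leq N_{0}(\delta)$ are not uniformly bounded, and this head, like the corresponding integral piece over $\big[\mathscr{P}_{1,2N_{0}(\delta)},1\big]$, is of size $C\big(\frac{r+1}{r}\big)^{2}|\log\delta|^{2}$ rather than $O(1)$; since $\partial_{\tilde{x}_{1}}v_{1}^{0}\gtrsim|\log\delta|^{4}$ on $R_{\delta}$, it is still absorbed into the relative error, exactly as in the paper.
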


Now, we are ready to prove Proposition \ref{prop h}.
\begin{proof}[\bf Proof of Proposition \ref{prop h}.]
For ${\bf{x}}\in R_{\delta}$, by using Lemma \ref{lem rho v}, Lemma \ref{lem x1 v0}, and Lemma \ref{lem x1 v}, we have
$$\nabla v(\tilde{\bf{x}})=\frac{2}{\frac{4r}{r+1}\delta+\rho(\tilde{\bf{x}})^{2}}
\left(\left(\frac{r}{r+1}, 0, 0\right)+\frac{r+1}{r}O(|\log \delta|^{-1})\right).$$
Combining the definition of $v(\tilde{\textbf{x}})$, $r$, $\delta$, and Proposition \ref{prop Q}, we obtain
\begin{align*}
\nabla h(\textbf{x})&=\frac{-1}{4\pi Mr_{1}}\Big(\frac{Q_{1}}{r_{2}}+\frac{Q_{2}}{r_{1}}\Big)\nabla v(\tilde{\bf{x}})\\
&=\frac{-(r_{1}+r_{2})}{4\pi r_{1}r_{2}|\log\varepsilon|}\frac{1}{\varepsilon+\frac{1}{4}\left(\frac{1}{r_{1}}+\frac{1}{r_{2}}\right)\rho({\bf{x}})^{2}}\left({\bf{n}}
+\Big(\frac{r_{1}+r_{2}}{r_{2}}\Big)^{2}O(|\log\varepsilon|^{-1})\right).
\end{align*}
Then Proposition \ref{prop h} is proved.
\end{proof}

\subsection{Proof of Lemma \ref{lem rho v}}\label{pf lem rho v}

We first observe that if $\textbf{x}=(x_{1}, x_{2}, x_{3})\in R_{\delta}$, then
$$|\tilde{x}_{1}|\leq 1+\delta-\sqrt{1-\rho(\tilde{\textbf{x}})^{2}},\quad \rho(\tilde{\textbf{x}})\leq |\log\delta|^{-2}.$$
Hence,
\begin{align}\label{|tilde{x}|}
|\tilde{x}_{1}|\leq \delta+\rho(\tilde{\textbf{x}})^{2}.
\end{align}

Using the notation
$$\tilde{\rho}:=\rho(\tilde{\textbf{x}}),$$
$v(\tilde{\textbf{x}})$ can be expressed as
\begin{equation*}
v(\tilde{\textbf{x}})=\frac{rQ_{2}}{Q_{1}+rQ_{2}}(v_{1, 1}+v_{1, 2})
+\frac{rQ_{1}}{Q_{1}+rQ_{2}}(v_{2, 1}+v_{2, 2}),
\end{equation*}
where
\begin{align*}
v_{1, 1}&=\sum\limits_{k=0}^{\infty}q_{1, 2k}\big(s_{1}^{-1/2}-t_{1}^{-1/2}\big),\quad v_{1, 2}=\sum\limits_{k=0}^{\infty}q_{1, 2k}t_{1}^{-1/2}-\sum\limits_{k=0}^{\infty}q_{1, 2k+1}t_{2}^{-1/2},\\
v_{2, 1}&=\sum\limits_{k=0}^{\infty}q_{2, 2k+1}\big({\tilde{t}}_{1}^{-1/2}-s_{2}^{-1/2}\big),\quad v_{2, 2}=\sum\limits_{k=0}^{\infty}q_{2, 2k+1}s_{2}^{-1/2}-\sum\limits_{k=0}^{\infty}q_{2, 2k}{\tilde{s}}_{1}^{-1/2},
\end{align*}
\begin{align*}
&s_{1}=(\tilde{x}_{1}-\mathscr{P}_{1, 2k})^{2}+\tilde{\rho}^{2},\quad \tilde{s}_{1}=(\tilde{x}_{1}-\mathscr{P}_{2, 2k})^{2}+\tilde{\rho}^{2},\quad s_{2}=(\tilde{x}_{1}+\mathscr{P}_{2, 2k+1})^{2}+\tilde{\rho}^{2},\\
&t_{1}=(\tilde{x}_{1}+\mathscr{P}_{1, 2k})^{2}+\tilde{\rho}^{2},\quad \tilde{t}_{1}=(\tilde{x}_{1}-\mathscr{P}_{2, 2k+1})^{2}+\tilde{\rho}^{2},\quad t_{2}=(\tilde{x}_{1}-\mathscr{P}_{1, 2k+1})^{2}+\tilde{\rho}^{2}.
\end{align*}
Therefore, in order to estimate $|\partial_{\tilde{x}_{2}} v|$ and $|\partial_{\tilde{x}_{3}} v|$, it suffices to estimate $|\partial_{\tilde{\rho}}v|$. We shall divide the rest of the proof into two steps.

\noindent{\bf Step 1. Estimates of $|\partial_{\tilde{\rho}}v_{1,1}|$ and $|\partial_{\tilde{\rho}}v_{2,1}|$.}
Notice that
\begin{align*}
\partial_{\tilde{\rho}}v_{1, 1}&=\sum\limits_{k=0}^{\infty}q_{1, 2k}\tilde{\rho}\big(t_{1}^{-\frac{3}{2}}-s_{1}^{-\frac{3}{2}}\big)\\
&=3\tilde{\rho}\sum\limits_{k=0}^{\infty}q_{1, 2k}\int_{-\tilde{x}_{1}}^{\tilde{x}_{1}}\big(t-\mathscr{P}_{1, 2k}\big)\left(\left(t-\mathscr{P}_{1, 2k}\right)^{2}+\tilde{\rho}^{2}\right)^{-5/2}\ dt.
\end{align*}
Therefore, we have
\begin{equation*}
|\partial_{\tilde{\rho}}v_{1, 1}|\leq 3\tilde{\rho}\sum\limits_{k=0}^{\infty}q_{1, 2k}\int_{-\tilde{x}_{1}}^{\tilde{x}_{1}}\left(\left(t-\mathscr{P}_{1, 2k}\right)^{2}+\tilde{\rho}^{2}\right)^{-2}\ dt.
\end{equation*}
By \eqref{|tilde{x}|}, there exists some constant $C$ such that
\begin{equation*}
\left(t-\mathscr{P}_{1, 2k}\right)^{2}+\tilde{\rho}^{2}\geq C(\tilde{\rho}^{2}+\mathscr{P}_{1, 2k}^{2}),\quad\forall~k\geq 0.
\end{equation*}
It then follows that
\begin{align*}
|\partial_{\tilde{\rho}}v_{1, 1}|&\leq C\tilde{\rho}\sum\limits_{k=0}^{\infty}q_{1, 2k}\int_{-\tilde{x}_{1}}^{\tilde{x}_{1}}\left(\tilde{\rho}^{2}+\mathscr{P}_{1, 2k}^{2}\right)^{-2}\ dt\\
&\leq C\frac{r+1}{r}\tilde{\rho}\sum\limits_{k=0}^{\infty}q_{1, 2k}\left(\tilde{\rho}^{2}+\mathscr{P}_{1, 2k}^{2}\right)^{-1}.
\end{align*}
For $k\leq N(\delta)-1$, we obtain from \eqref{q_{1,2k}} that
\begin{equation}\label{q_{1,2k}leq}
q_{1,2k}\leq C \frac{1}{k+1}, \quad q_{1,2k+1}\leq C \frac{1}{k+1}.
\end{equation}
It is easy see from Lemma \ref{lem3.2} \eqref{lem3.2-3} that
\begin{equation}\label{y_{1,2k}leq}
\mathscr{P}_{1,2k}\geq C\Theta_{1,2k+1}(r), \quad
\mathscr{P}_{1,2k+1}\leq -C\Theta_{1,2k+1}(r).
\end{equation}
Then, by using \eqref{q_{1,2k}leq} and \eqref{y_{1,2k}leq}, we have
\begin{align}\label{I}
&\tilde{\rho}\sum\limits_{k=0}^{N(\delta)-1}q_{1, 2k}\left(\tilde{\rho}^{2}+\mathscr{P}_{1, 2k}^{2}\right)^{-1}\nonumber\\
&\leq C\sum\limits_{k=1}^{N(\delta)}k\tilde{\rho}\left(\tilde{\rho}^{2}k^{2}+\Big(\frac{r}{r+1}\Big)^{2}\right)^{-1}\nonumber\\
&\leq C\left(\int_{1}^{N(\delta)}\tilde{\rho}s\left(\tilde{\rho}^{2}s^{2}+\Big(\frac{r}{r+1}\Big)^{2}\right)^{-1}\ ds
+\frac{r+1}{r}\right)\nonumber\\
&\leq C\left(\tilde{\rho}+\Big(\frac{r\delta}{r+1}\Big)^{1/2}\right)^{-1}\left(1+\log\left(1+\frac{r+1}{r}\frac{\tilde{\rho}^{2}}{\delta}\right)\right).
\end{align}
In view of \eqref{2.22} and \eqref{2.30},
\begin{align*}
\tilde{\rho}\sum\limits_{k=N(\delta)}^{\infty}q_{1, 2k}\left(\tilde{\rho}^{2}+\mathscr{P}_{1, 2k}^{2}\right)^{-1}
&\leq \tilde{\rho}\sum\limits_{k=N(\delta)}^{\infty}q_{1, 2k}\left(\tilde{\rho}^{2}+\frac{r\delta}{r+1}\right)^{-1}\\
&\leq C\left(\tilde{\rho}+\Big(\frac{r\delta}{r+1}\Big)^{1/2}\right)^{-1}.
\end{align*}
Thus,
\begin{equation}\label{est v11}
|\partial_{\tilde{\rho}}v_{1, 1}|
\leq C\frac{r+1}{r}\left(\tilde{\rho}+\Big(\frac{r\delta}{r+1}\Big)^{1/2}\right)^{-1}\left(1+\log\left(1+\frac{r+1}{r}\frac{\tilde{\rho}^{2}}{\delta}\right)\right).
\end{equation}
Similarly, $|\partial_{\tilde{\rho}}v_{2, 1}|$ is also bounded by the right-hand side of \eqref{est v11}.

\noindent{\bf Step 2. Estimates of $|\partial_{\tilde{\rho}}v_{1, 2}|$ and $|\partial_{\tilde{\rho}}v_{2, 2}|$.}
By a direct calculation, we have
\begin{align}\label{ineq v12}
\partial_{\tilde{\rho}}v_{1, 2}&=\sum\limits_{k=0}^{\infty}q_{1, 2k}\tilde{\rho}\left(\mu_{1, 2k+1} t_{2}^{-\frac{3}{2}}-t_{1}^{-\frac{3}{2}}\right)\nonumber\\
&=\sum\limits_{k=0}^{N(\delta)-1}
+\sum\limits_{k=N(\delta)}^{\infty}q_{1, 2k}\tilde{\rho}\left(\mu_{1, 2k+1}t_{2}^{-\frac{3}{2}}-t_{1}^{-\frac{3}{2}}\right)\nonumber\\
&=:\mbox{I}+\mbox{II}.
\end{align}
Notice that
\begin{equation*}
\mbox{I}
=\sum\limits_{k=0}^{N(\delta)-1}q_{1, 2k}\tilde{\rho}t_{2}^{-\frac{3}{2}}\left(\mu_{1, 2k+1}-1\right)+\sum\limits_{k=0}^{N(\delta)-1}q_{1, 2k}\tilde{\rho}\big(t_{2}^{-\frac{3}{2}}-t_{1}^{-\frac{3}{2}}\big)=:\mbox{I}_{1}+\mbox{I}_{2}.
\end{equation*}

Recall \eqref{mu_{1, 2k+1}'} implies that
\begin{equation}\label{|1-mu_{1, 2k}|}
|1-\mu_{1, 2k+1}|\leq C\Theta_{2,2k+1}(r) \quad \text{for}~k\leq N(\delta)-1.
\end{equation}
For $k\leq N(\delta)-1$, it results from \eqref{|tilde{x}|} and \eqref{y_{1,2k}leq} that
\begin{equation}\label{(tilde{x}-y_{1, 2k+1})^{2}}
\left(\tilde{x}_{1}-\mathscr{P}_{1, 2k+1}\right)^{2}+\tilde{\rho}^{2}
\geq C(\tilde{\rho}^{2}+\mathscr{P}_{1, 2k+1}^{2})\geq C\left(\tilde{\rho}^{2}+(\Theta_{1,2k+1}(r))^{2}\right),
\end{equation}
and
\begin{equation}\label{(tilde{x}+y_{1, 2k})^{2}}
\left(\tilde{x}_{1}+\mathscr{P}_{1, 2k}\right)^{2}+\tilde{\rho}^{2}
\geq C(\tilde{\rho}^{2}+\mathscr{P}_{1, 2k}^{2})\geq C\left(\tilde{\rho}^{2}+(\Theta_{1,2k+1}(r))^{2}\right).
\end{equation}
Hence, we have
\begin{equation*}
|~\mbox{I}_{1}~|\leq C\frac{r+1}{r}\sum\limits_{k=0}^{N(\delta)-1}
\frac{q_{1, 2k}\tilde{\rho}}{\mathscr{P}_{1, 2k+1}^{2}+\tilde{\rho}^{2}}.
\end{equation*}
Similar to \eqref{I}, we have
\begin{equation}\label{est I1}
|~\mbox{I}_{1}~|\leq C\frac{r+1}{r}\left(\tilde{\rho}+\Big(\frac{r\delta}{r+1}\Big)^{1/2}\right)^{-1}\left(1+\log\left(1+\frac{r+1}{r}\frac{\tilde{\rho}^{2}}{\delta}\right)\right).
\end{equation}
A direct calculation gives
\begin{equation}\label{3.12}
\left|t_{2}^{-\frac{3}{2}}-t_{1}^{-\frac{3}{2}}\right|\leq |\mathscr{P}_{1, 2k}+\mathscr{P}_{1, 2k+1}|
\left(t_{2}^{-1/2}t_{1}^{-3/2}+t_{2}^{-3/2}t_{1}^{-1/2}+t_{1}^{-1}t_{2}^{-1}\right).
\end{equation}
By \eqref{(tilde{x}-y_{1, 2k+1})^{2}} and \eqref{(tilde{x}+y_{1, 2k})^{2}},
\begin{equation*}
|~\mbox{I}_{2}~|\leq C\sum\limits_{k=0}^{N(\delta)-1}q_{1, 2k}\tilde{\rho}\frac{\big|\mathscr{P}_{1, 2k}+\mathscr{P}_{1, 2k+1}\big|}{\left((\Theta_{1,2k+1}(r))^{2}+\tilde{\rho}^{2}\right)^{2}}.
\end{equation*}
By the definition of $N$, for $k\leq N(\delta)$, there is a constant C independent of $r$ and $\delta$ such that
\begin{equation*}
\sqrt{\delta}\leq C\Big(\frac{r}{r+1}\Big)^{1/2}\frac{1}{k}.
\end{equation*}
By \eqref{p_{1,2k}p_{1,2k+1}} and \eqref{y_{1,2k}}, we have for $k\leq N(\delta)$,
\begin{equation}\label{y_{1, 2k}+y_{1, 2k+1}}
\left|\mathscr{P}_{1, 2k}+\mathscr{P}_{1, 2k+1}\right|=\left|\frac{\delta^{2}+2\delta r-\mathscr{P}_{1, 2k}^{2}}{r+\delta+\mathscr{P}_{1, 2k}}\right|\leq C\frac{r}{r+1}\frac{1}{k^{2}}.
\end{equation}
Hence,
\begin{equation*}
|~\mbox{I}_{2}~|\leq C\frac{r+1}{r}\sum\limits_{k=0}^{N(\delta)-1}q_{1, 2k}\tilde{\rho}\frac{1}{(\Theta_{1,2k+1}(r))^{2}+\tilde{\rho}^{2}}.
\end{equation*}
Similar to \eqref{I}, $|~\mbox{I}_{2}~|$ is also bounded by the right-hand side of \eqref{est I1}.

Notice that
\begin{equation*}
\mbox{II}
=\sum\limits_{k=N(\delta)}^{\infty}q_{1, 2k}\tilde{\rho}t_{2}^{-\frac{3}{2}}\left(\mu_{1, 2k+1}-1\right)+\sum\limits_{k=N(\delta)}^{\infty}q_{1, 2k}\tilde{\rho}\left(t_{2}^{-\frac{3}{2}}-t_{1}^{-\frac{3}{2}}\right)=:\mbox{II}_{1}+\mbox{II}_{2}.
\end{equation*}
For $k\geq N(\delta)$, using \eqref{y_{1,2k}} and the fact that the sequence $\mathscr{P}_{1, 2k}$ is decreasing to $\mathscr{P}_{1}$, we have
\begin{equation}\label{eq p 1 2k}
\mathscr{P}_{1, 2k}\simeq C\Big(\frac{r\delta}{r+1}\Big)^{1/2}.
\end{equation}
Then
\begin{equation}\label{|1-mu_{1, 2k}|'}
|1-\mu_{1, 2k+1}|=\left|\frac{\delta+\mathscr{P}_{1, 2k}}{r+\delta+\mathscr{P}_{1, 2k}}\right|
\leq C\Big(\frac{r+1}{r}\delta\Big)^{1/2}.
\end{equation}
On the other hand, recalling the fact that $\mathscr{P}_{1, 2k+1}$ is increasing to $\mathscr{P}_{2}$, it follows from \eqref{|tilde{x}|} and \eqref{p_{1}} that for all $k\geq 0$,
\begin{equation}\label{(tilde{x}-y_{1, 2k+1})'^{2}}
\left(\tilde{x}_{1}-\mathscr{P}_{1, 2k+1}\right)^{2}+\tilde{\rho}^{2}
\geq C(\tilde{\rho}^{2}+\mathscr{P}_{1, 2k+1}^{2})\geq C\left(\tilde{\rho}^{2}+\frac{r\delta}{r+1}\right).
\end{equation}
Similarly,
\begin{equation}\label{(tilde{x}+y_{1, 2k})'^{2}}
\left(\tilde{x}_{1}+\mathscr{P}_{1, 2k}\right)^{2}+\tilde{\rho}^{2}
\geq C(\tilde{\rho}^{2}+\mathscr{P}_{1, 2k}^{2})\geq C\left(\tilde{\rho}^{2}+\frac{r\delta}{r+1}\right),\quad\forall~k\geq 0.
\end{equation}
It follows from \eqref{2.22}, \eqref{|1-mu_{1, 2k}|'}, and \eqref{(tilde{x}-y_{1, 2k+1})'^{2}} that
\begin{equation*}
|\mbox{II}_{1}|=\sum\limits_{k=N(\delta)}^{\infty}q_{1, 2k}\tilde{\rho}t_{2}^{-\frac{3}{2}}|\mu_{1, 2k+1}-1|
\leq C\frac{r+1}{r}\left(\tilde{\rho}+\Big(\frac{r}{r+1}\delta\Big)^{1/2}\right)^{-1}.
\end{equation*}
For $k\geq N(\delta)$, by using \eqref{eq p 1 2k}, we have
\begin{align}\label{|y_{1, 2k+1}+y_{1, 2k}|'}
|\mathscr{P}_{1, 2k+1}+\mathscr{P}_{1, 2k}|&=\left|\delta^{2}+2\delta r-\mathscr{P}_{1, 2k}^{2}\right|(r+\delta+\mathscr{P}_{1, 2k})^{-1}\nonumber\\
&\leq C\delta.
\end{align}
Hence, we obtain from \eqref{2.22}, \eqref{3.12}, \eqref{(tilde{x}-y_{1, 2k+1})'^{2}}--\eqref{|y_{1, 2k+1}+y_{1, 2k}|'}  that
\begin{align*}
|~\mbox{II}_{2}~|&\leq \sum\limits_{k=N(\delta)}^{\infty}q_{1, 2k}\tilde{\rho}|\mathscr{P}_{1, 2k}+\mathscr{P}_{1, 2k+1}|\left(\frac{r\delta}{r+1}+\tilde{\rho}^{2}\right)^{-2}\\
&\leq C\frac{r+1}{r}\left(\Big(\frac{r\delta}{r+1}\Big)^{1/2}+\tilde{\rho}\right)^{-1}.
\end{align*}
Coming back to \eqref{ineq v12}, we have
\begin{equation*}
|\partial_{\tilde{\rho}}v_{1, 2}|\leq C\frac{r+1}{r}\left(\tilde{\rho}+\Big(\frac{r\delta}{r+1}\Big)^{1/2}\right)^{-1}
\left(1+\log\left(1+\frac{r+1}{r}\frac{\tilde{\rho}^{2}}{\delta}\right)\right).
\end{equation*}
Similarly,
\begin{equation*}
|\partial_{\tilde{\rho}}v_{2, 2}| \leq C\frac{r+1}{r}\left(\tilde{\rho}+\Big(\frac{r\delta}{r+1}\Big)^{1/2}\right)^{-1}
\left(1+\log\left(1+\frac{r+1}{r}\frac{\tilde{\rho}^{2}}{\delta}\right)\right).
\end{equation*}
Lemma \ref{lem rho v} is proved.

\subsection{Proof of Lemma \ref{lem x1 v0}}\label{pf lem x1 v0}
By the definition of $v_{1}^{0}$, we have
\begin{equation*}
\partial_{\tilde{x}_{1}}v_{1}^{0}=\int_{\mathscr{P}_{1}}^{|\log\delta|^{-1}}+\int_{|\log\delta|^{-1}}^{1}\partial_{\tilde{x}_{1}}f_{1}(t)dt=:\mbox{J}_{1}+\mbox{J}_{2}.
\end{equation*}

If $|\log\delta|^{-1}\leq t\leq1$, then for all $\textbf{x}=(x_{1}, x_{2}, x_{3})\in R_{\delta}$, we have
$$|\tilde{\textbf{x}}\pm(t, 0, 0)|\geq C|t|$$
for some constant $C$. Since $\mathscr{P}_{1}=O(\sqrt{\delta})$, we have $\sqrt{t^{2}-\mathscr{P}_{1}^{2}}\geq C|t|$. Thus, we have
\begin{equation}\label{|I|leq C|log delta'|^{2}}
|\mbox{J}_{2}|\leq C\int_{|\log\delta|^{-1}}^{1}\frac{1}{t^{3}}\ dt\leq C|\log\delta|^{2}.
\end{equation}

Suppose now that $\mathscr{P}_{1}\leq t\leq|\log\delta|^{-1}$. Using \eqref{|tilde{x}|} and the fact that
$$\mathscr{P}_{1}^{2}\geq\frac{r\delta}{r+1}$$
again, we can see that for all $\textbf{x}=(x_{1}, x_{2}, x_{3})\in R_{\delta}$, there exists some constant $C$ independent of $r$ and $\delta$, such that
\begin{equation}\label{|t tilde{x}|}
|t\tilde{x}_{1}|\leq |t|(\delta+\tilde{\rho}^{2})\leq C\frac{r+1}{r}\frac{1}{|\log\delta|}(t^{2}+\tilde{\rho}^{2}),
\end{equation}
and
\begin{equation}\label{|tilde{x}|^{2}}
|\tilde{x}_{1}|^{2}\leq (\delta+\tilde{\rho}^{2})^{2}\leq C\frac{r+1}{r}\frac{1}{|\log\delta|}(t^{2}+\tilde{\rho}^{2}).
\end{equation}
Thus, we have
\begin{align}\label{|textbf{x}pm(t, 0, 0)|^{3}}
|\tilde{\textbf{x}}\pm(t, 0, 0)|^{-3}&=((\tilde{x}_{1}\pm t)^{2}+\tilde{\rho}^{2})^{-3/2}\nonumber\\
&=(t^{2}+\tilde{\rho}^{2})^{-3/2}\left(1+\frac{r+1}{r}O(|\log\delta|^{-1})\right).
\end{align}

From the mean value property, we have
\begin{align*}
&\left|-|\tilde{\textbf{x}}-(t, 0, 0)|^{-3}+|\tilde{\textbf{x}}+(t, 0, 0)|^{-3}\right|\\
&=\left|\Big((t^{2}+\tilde{x}_{1}^{2}+\tilde{\rho}^{2})-2\tilde{x}_{1}t\Big)^{-3/2}-\Big((t^{2}+\tilde{x}_{1}^{2}+\tilde{\rho}^{2})+2\tilde{x}_{1}t\Big)^{-3/2}\right|\\
&\leq6|\tilde{x}_{1}t|\Big((t^{2}+\tilde{x}_{1}^{2}+\tilde{\rho}^{2})-|2\tilde{x}_{1}t|\Big)^{-5/2}.
\end{align*}
It then follows from \eqref{|t tilde{x}|} and \eqref{|tilde{x}|^{2}} that
\begin{equation}\label{1'}
\left|\tilde{x}_{1}\left(-|\tilde{\textbf{x}}-(t, 0, 0)|^{-3}+|\tilde{\textbf{x}}+(t, 0, 0)|^{-3}\right)\right|
\leq C\frac{r+1}{r}|\log\delta|^{-1}|t|(t^{2}+\tilde{\rho}^{2})^{-3/2}
\end{equation}
for some constant $C>0$ independent of $r$ and $\delta$.

In view of \eqref{|textbf{x}pm(t, 0, 0)|^{3}} and \eqref{1'}, we have
\begin{align*}
&\partial_{\tilde{x}_{1}}\left(|\tilde{\textbf{x}}-(t, 0, 0)|^{-1}-|\tilde{\textbf{x}}+(t, 0, 0)|^{-1}\right)\\
&=t\left(|\tilde{\textbf{x}}+(t, 0, 0)|^{-3}+|\tilde{\textbf{x}}-(t, 0, 0)|^{-3}\right)
-\tilde{x}_{1}\left(|\tilde{\textbf{x}}-(t, 0, 0)|^{-3}-|\tilde{\textbf{x}}+(t, 0, 0)|^{-3}\right)\\
&=t(t^{2}+\tilde{\rho}^{2})^{-3/2}\left(2+\frac{r+1}{r}O(|\log\delta|^{-1})\right).
\end{align*}
Combining \eqref{p_{1}} and $\tilde{\rho}\leq |\log\delta|^{-2}$, we have
\begin{align}\label{property J2}
\mbox{J}_{1}&=\left(2+\frac{r+1}{r}O(|\log\delta|^{-1})\right)\int_{\mathscr{P}_{1}}^{|\log\delta|^{-1}}t (t^{2}+\tilde{\rho}^{2})^{-3/2}\Big(t^{2}-\mathscr{P}_{1}^{2}\Big)^{-1/2}\ dt\nonumber\\
&=\left(2+\frac{r+1}{r}O(|\log\delta|^{-1})\right)\Big(\mathscr{P}_{1}^{2}+\tilde{\rho}^{2}\Big)^{-1}
\left(\frac{|\log\delta|^{-2}-\mathscr{P}_{1}^{2}}{|\log\delta|^{-2}+\tilde{\rho}^{2}}\right)^{1/2}\nonumber\\
&=\frac{2}{\frac{4r}{r+1}\delta+\tilde{\rho}^{2}}\left(1+\frac{r+1}{r}O(|\log\delta|^{-1})\right).
\end{align}
By using \eqref{|I|leq C|log delta'|^{2}} and \eqref{property J2}, we get
\begin{equation*}
\partial_{\tilde{x}_{1}}v_{1}^{0}=\frac{2}{\frac{4r}{r+1}\delta+\tilde{\rho}^{2}}\left(1+\frac{r+1}{r}O(|\log\delta|^{-1})\right).
\end{equation*}
Similarly,
\begin{align*}
\partial_{\tilde{x}_{1}}v_{2}^{0}=-\frac{2}{\frac{4r}{r+1}\delta+\tilde{\rho}^{2}}\left(1+\frac{r+1}{r}O(|\log\delta|^{-1})\right).
\end{align*}
This completes the proof of Lemma \ref{lem x1 v0}.

\subsection{Proof of Lemma \ref{lem x1 v}}\label{pf lem x1 v}
From the definitions of $v_{1}$ and $v_{1}^{0}$, we have
\begin{equation*}
\partial_{\tilde{x}_{1}}v_{1}=\sum\limits_{k=0}^{N_{0}(\delta)-1}+\sum\limits_{k=N_{0}(\delta)}^{N_{1}(\delta)-1}
+\sum\limits_{k=N_{1}(\delta)}^{\infty}\partial_{\tilde{x}_{1}}\alpha_{k}=:S_{1}(\tilde{\textbf{x}})+S_{2}(\tilde{\textbf{x}})+S_{3}(\tilde{\textbf{x}})
\end{equation*}
and
\begin{equation*}
\partial_{\tilde{x}_{1}}v_{1}^{0}=\int_{\mathscr{P}_{1, 2N_{0}(\delta)}}^{1}+\int_{\mathscr{P}_{1, 2N_{1}(\delta)}}^{\mathscr{P}_{1, 2N_{0}(\delta)}}+\int_{\mathscr{P}_{1}}^{\mathscr{P}_{1, 2N_{1}(\delta)}}\partial_{\tilde{x}_{1}}f_{1}(t)\ dt=:\mbox{T}_{1}+\mbox{T}_{2}+\mbox{T}_{3}.
\end{equation*}
The rest of the proof is divided into four steps.

\noindent{\bf Step 1. Estimates of $S_{1}$, $S_{2}$, $S_{3}$, $\mbox{T}_{1}$, and $\mbox{T}_{3}$.}
By \eqref{|tilde{x}|}, one can see that for all $\textbf{x}\in R_{\delta}$, there is a constant $C>0$ independent of $k$ such that
$$|\tilde{\textbf{x}}-{\pmb{\mathscr{P}}}_{1, 2k}|\geq C |\mathscr{P}_{1, 2k}|, \quad|\tilde{\textbf{x}}-{\pmb{\mathscr{P}}}_{1, 2k+1}|\geq C |\mathscr{P}_{1, 2k+1}|.$$
So we have from \eqref{q_{1,2k}leq} and
\eqref{y_{1,2k}leq} that
\begin{equation*}
|S_{1}|\leq\sum\limits_{k=0}^{N_{0}(\delta)-1}|\partial_{\tilde{x}_{1}}\alpha_{k}|\leq C \left(\frac{r+1}{r}\right)^{2}|\log \delta|^{2}.
\end{equation*}
We use the fact that $\mathscr{P}_{1, 2k}$ is decreasing to $\mathscr{P}_{1}$, $\mathscr{P}_{1, 2k+1}$ is increasing to $\mathscr{P}_{2}$, and \eqref{q_{1, 2k}leq e} again to conclude that
\begin{align*}
|S_{3}|&\leq\sum\limits_{k=N_{1}(\delta)}^{\infty}|\partial_{\tilde{x}_{1}}\alpha_{k}|\\
&\leq C\frac{r+1}{r}\frac{1}{\delta}\sum\limits_{k=N_{1}(\delta)}^{\infty}q_{1, 2k}\\
&\leq C\frac{r+1}{r}\frac{1}{\delta}\sum\limits_{k=N_{1}(\delta)}^{\infty}\left(\frac{r}{\big(r+\delta+\mathscr{P}_{1}\big)\big(1+\delta-\mathscr{P}_{2}\big)}\right)^{k-N_{1}(\delta)}
e^{-C_{2}\big(\frac{r}{r+1}\big)^{1/2}
\frac{1}{|\sqrt{\delta}\log \delta|}}\\
&\leq C.
\end{align*}
Similarly, we have
\begin{equation*}
|\mbox{T}_{1}|\leq\int_{\mathscr{P}_{1, 2N_{0}(\delta)}}^{1}|\partial_{\tilde{x}_{1}}f_{1}(t)|\ dt\leq \int_{C\frac{r}{r+1}|\log \delta|^{-1}}^{1}\frac{1}{t^{3}}\ dt\leq C\left(\frac{r+1}{r}\right)^{2}|\log \delta|^{2}.
\end{equation*}
By Lemma \ref{lem3.2} \eqref{lem3.2-4} and the definition of $f_{1}$,
\begin{align*}
|\mbox{T}_{3}|\leq\int_{\mathscr{P}_{1}}^{\mathscr{P}_{1, 2N_{1}(\delta)}}|\partial_{\tilde{x}_{1}}f_{1}(t)|\ dt&\leq C\left(\frac{r+1}{r}\right)^{\frac{5}{4}}\int_{\mathscr{P}_{1}}^{\mathscr{P}_{1, 2N_{1}(\delta)}}
\delta^{-\frac{5}{4}}\frac{1}{\sqrt{t-\mathscr{P}_{1}}}\ dt\\
&\leq C.
\end{align*}
Thus, we have showed that
\begin{equation*}
|S_{1}|+|S_{3}|+|\mbox{T}_{1}|+|\mbox{T}_{3}|\leq C\left(\frac{r+1}{r}\right)^{2}|\log \delta|^{2}.
\end{equation*}

We set
\begin{align*}
S_{2}(\tilde{\textbf{x}})
&=\sum\limits_{k=N_{0}(\delta)}^{N_{1}(\delta)-1}\partial_{\tilde{x}_{1}}\left(\frac{q_{1, 2k}}{|\tilde{\textbf{x}}-{\pmb{\mathscr{P}}}_{1, 2k}|}-\frac{q_{1, 2k}}{|\tilde{\textbf{x}}+{\pmb{\mathscr{P}}}_{1, 2k}|}\right)\\
&\quad+\sum\limits_{k=N_{0}(\delta)}^{N_{1}(\delta)-1}\partial_{\tilde{x}_{1}}\left(\frac{q_{1, 2k}}{|\tilde{\textbf{x}}+{\pmb{\mathscr{P}}}_{1, 2k}|}-\frac{q_{1, 2k+1}}{|\tilde{\textbf{x}}-{\pmb{\mathscr{P}}}_{1, 2k+1}|}\right)\\
&=:S_{2, 1}(\tilde{\textbf{x}})+S_{2, 2}(\tilde{\textbf{x}}),
\end{align*}
and
\begin{equation*}
\widetilde{S}_{2, 1}(\tilde{\textbf{x}}):=\sum\limits_{k=N_{0}(\delta)}^{N_{1}(\delta)-1}\partial_{\tilde{x}_{1}}f_{1}(\mathscr{P}_{1,2k})(\mathscr{P}_{1, 2k}-\mathscr{P}_{1, 2k+2}).
\end{equation*}
We claim that
\begin{equation}\label{S_{2, 1}}
S_{2, 1}(\tilde{\textbf{x}})=\left(\frac{r}{r+1}+\frac{r+1}{r}O(|\log \delta|^{-1})\right)\widetilde{S}_{2, 1}(\tilde{\textbf{x}}),
\end{equation}
\begin{equation}\label{S_{2, 2}}
|S_{2, 2}(\tilde{\textbf{x}})|\leq C\frac{r+1}{r}\left(\Big(\frac{r}{r+1}\delta\Big)^{1/2}+\tilde{\rho}\right)^{-1},
\end{equation}
and
\begin{equation}\label{widetilde{S}_{2, 1}}
|\widetilde{S}_{2, 1}(\tilde{\textbf{x}})-\mbox{T}_{2}|\leq C\frac{r+1}{r}\left(\Big(\frac{r}{r+1}\delta\Big)^{1/2}+\tilde{\rho}\right)^{-1}.
\end{equation}

Then, from \eqref{S_{2, 1}}--\eqref{widetilde{S}_{2, 1}},
\begin{equation*}
\partial_{\tilde{x}_{1}}v_{1}=\left(\frac{r}{r+1}+\frac{r+1}{r}O(|\log \delta|^{-1})\right)\partial_{\tilde{x}_{1}}v_{1}^{0}+R,
\end{equation*}
where
$$|R|\leq C\left(\frac{r+1}{r}\right)^{2}\left(|\log \delta|^{2}+\left(\Big(\frac{r}{r+1}\delta\Big)^{1/2}+\tilde{\rho}\right)^{-1}\right).$$
Hence,
\begin{equation*}
\partial_{\tilde{x}_{1}}v_{1}=\left(\frac{r}{r+1}+\frac{r+1}{r}O(|\log \delta|^{-1})\right)\partial_{\tilde{x}_{1}}v_{1}^{0}.
\end{equation*}
Similarly, we have
\begin{equation*}
\partial_{\tilde{x}_{1}}v_{2}=-\left(\frac{1}{r+1}+\left(\frac{r+1}{r}\right)^{2}O(|\log \delta|^{-1})\right)\partial_{\tilde{x}_{1}}v_{2}^{0}.
\end{equation*}
So Lemma \ref{lem x1 v} is an immediate consequence.

The rest is to prove \eqref{S_{2, 1}}--\eqref{widetilde{S}_{2, 1}}.

\noindent{\bf Step 2. Proof of \eqref{S_{2, 1}}.}
We obtain from \eqref{y_{1,2k}} that
$$\mathscr{P}_{1, 2k}\leq \mathscr{P}_{1, 2N_{0}(\delta)}\leq C\frac{r}{r+1}\frac{1}{|\log \delta|}\quad \text{for}~k\geq N_{0}(\delta).$$
This means that, for $k\geq N_{0}(\delta)$, we have
$$\mathscr{P}_{1, 2k}=\frac{r}{r+1}O(|\log \delta|^{-1}).$$
Hence, for $k\geq N_{0}(\delta)$, by using \eqref{p_{1,2k}p_{1,2k+1}}, we have
\begin{align}\label{y_{1, 2k+1}-y_{1, 2k+3}}
\mathscr{P}_{1, 2k+3}-\mathscr{P}_{1, 2k+1}
&=\mu_{1, 2k+1}\mu_{1, 2k+3}\Big(\mathscr{P}_{1, 2k}-\mathscr{P}_{1, 2k+2}\Big)\nonumber\\
&=(1+O(|\log \delta|^{-1}))\Big(\mathscr{P}_{1, 2k}-\mathscr{P}_{1, 2k+2}\Big),
\end{align}
and
\begin{align}\label{y_{1, 2k+1}^{2}-(frac{p_{2}}{r_{1}})^{2}}
\mathscr{P}_{1, 2k+1}^{2}-\mathscr{P}_{2}^{2}&=\mathscr{P}_{1, 2k}^{2}+\left(\frac{r}{r+1}\right)^{2}O(|\log \delta|^{-3})-\mathscr{P}_{1}^{2}\nonumber\\
&=\left(1+O(|\log \delta|^{-1})\right)\left(\mathscr{P}_{1, 2k}^{2}-\mathscr{P}_{1}^{2}\right).
\end{align}
It follows from \eqref{y_{1, 2k+1}-y_{1, 2k+3}}, \eqref{y_{1, 2k+1}^{2}-(frac{p_{2}}{r_{1}})^{2}}, and Lemma \ref{lem finer p q} \eqref{lem3.3-1} that
\begin{equation*}
S_{2, 1}(\tilde{\textbf{x}})=\left(\frac{r}{r+1}+\frac{r+1}{r}O(|\log \delta|^{-1})\right)\widetilde{S}_{2, 1}(\tilde{\textbf{x}}).
\end{equation*}
\eqref{S_{2, 1}} is thus proved.

\noindent{\bf Step 3. Proof of \eqref{S_{2, 2}}.}
We set
\begin{align}\label{def S22}
S_{2, 2}(\tilde{\textbf{x}})
&=\sum\limits_{k=N_{0}(\delta)}^{N_{1}(\delta)-1}q_{1, 2k}(\tilde{x}_{1}-\mathscr{P}_{1, 2k+1})(\mu_{1, 2k+1}-1)t_{2}^{-3/2}\nonumber\\
&\quad+\sum\limits_{k=N_{0}(\delta)}^{N_{1}(\delta)-1}q_{1, 2k}\left((\tilde{x}_{1}-\mathscr{P}_{1, 2k+1})t_{2}^{-3/2}-(\tilde{x}_{1}+\mathscr{P}_{1, 2k})t_{1}^{-3/2}\right)\nonumber\\
&=:\eta_{1}+\eta_{2},
\end{align}
where
\begin{align*}
|\eta_{1}|&\leq\sum\limits_{k=N_{0}(\delta)}^{N_{1}(\delta)-1}q_{1, 2k}|\mu_{1, 2k+1}-1|t_{2}^{-1}=\sum\limits_{k=N_{0}(\delta)}^{N(\delta)-1}+\sum\limits_{k=N(\delta)}^{N_{1}(\delta)-1}q_{1, 2k}|\mu_{1, 2k+1}-1|t_{2}^{-1}\\
&=:\eta_{1, 1}+\eta_{1, 2}.
\end{align*}

By using \eqref{q_{1,2k}leq}, \eqref{|1-mu_{1, 2k}|}, and \eqref{(tilde{x}-y_{1, 2k+1})^{2}}, we have
\begin{equation*}
\eta_{1, 1}\leq\sum\limits_{k=N_{0}(\delta)}^{N(\delta)-1}\left(\frac{r+1}{r}\right)^{2}
\leq C\frac{r+1}{r}\Big(\frac{r\delta}{r+1}\Big)^{-1/2}\quad\mbox{if}~\tilde{\rho}< \Big(\frac{r\delta}{r+1}\Big)^{1/2},
\end{equation*}
and
\begin{align*}
\eta_{1, 1}&\leq C\sum\limits_{k=N_{0}(\delta)+1}^{N(\delta)}\left(\tilde{\rho}^{2}(k+1)^{2}+\left(\frac{r}{r+1}\right)^{2}\right)^{-1}\\
&\leq C\left(\frac{r+1}{r}\right)^{2}\int_{N_{0}(\delta)}^{N(\delta)}\left(1+\left(\frac{r+1}{r}\tilde{\rho}s\right)^{2}\right)^{-1}ds\\
&\leq C\frac{r+1}{r}\frac{1}{\tilde{\rho}}\quad\mbox{if}~\tilde{\rho}>\Big(\frac{r\delta}{r+1}\Big)^{1/2}.
\end{align*}
It follows from \eqref{2.22}, \eqref{|1-mu_{1, 2k}|'}, and \eqref{(tilde{x}-y_{1, 2k+1})'^{2}} that
\begin{align*}
\eta_{1, 2}&\leq \left(\frac{r+1}{r}\right)^{1/2}\sum\limits_{k=N(\delta)}^{N_{1}(\delta)-1}q_{1, 2k}\sqrt{\delta}\left(\frac{r\delta}{r+1}+\tilde{\rho}^{2}\right)^{-1}\\
&\leq C\frac{r+1}{r}\left(\tilde{\rho}+\Big(\frac{r\delta}{r+1}\Big)^{1/2}\right)^{-1}.
\end{align*}
This means that
\begin{equation}\label{eta_{1}}
|\eta_{1}|\leq C\frac{r+1}{r}\left(\tilde{\rho}+\Big(\frac{r\delta}{r+1}\Big)^{1/2}\right)^{-1}.
\end{equation}

Notice that
\begin{align*}
|\eta_{2}|
&\leq\sum\limits_{k=N_{0}(\delta)}^{N(\delta)-1}+\sum\limits_{k=N(\delta)}^{N_{1}(\delta)-1}q_{1, 2k}|\mathscr{P}_{1, 2k}+\mathscr{P}_{1, 2k+1}|t_{1}^{-3/2}\\
&\quad+\sum\limits_{k=N_{0}(\delta)}^{N(\delta)-1}+\sum\limits_{k=N(\delta)}^{N_{1}(\delta)-1}q_{1, 2k}|\tilde{x}_{1}-\mathscr{P}_{1, 2k+1}|\left|t_{2}^{-3/2}-t_{1}^{-3/2}\right|\\
&=:\eta_{2, 1}+\eta_{2, 2}+\eta_{2, 3}+\eta_{2, 4}.
\end{align*}
We obtain from \eqref{y_{1, 2k}+y_{1, 2k+1}}, \eqref{q_{1,2k}leq}, and \eqref{(tilde{x}+y_{1, 2k})^{2}} that if $\tilde{\rho}< \Big(\frac{r}{r+1}\delta\Big)^{1/2}$, then
\begin{equation}\label{eta 21'}
\eta_{2, 1}\leq C\sum\limits_{k=N_{0}(\delta)}^{N(\delta)-1}\left(\frac{r+1}{r}\right)^{2}\left(1+\frac{1}{k}\right)^{2}
\leq C\frac{r+1}{r}\Big(\frac{r\delta}{r+1}\Big)^{-1/2}.
\end{equation}
If $\tilde{\rho}>\Big(\frac{r}{r+1}\delta\Big)^{1/2}$, then
\begin{align}\label{eta 21}
\eta_{2, 1}&\leq\sum\limits_{k=N_{0}(\delta)}^{N(\delta)-1}\frac{r}{r+1}\frac{1}{k^{2}}\frac{1}{k+1}\left(\tilde{\rho}^{2}+(\Theta_{1,2k+1}(r))^{2}\right)^{-\frac{3}{2}}\nonumber\\
&\leq C\frac{r}{r+1}\int_{N_{0}(\delta)}^{N(\delta)}
\left(s^{2}\tilde{\rho}^{2}+\left(\frac{r}{r+1}\right)^{2}\right)^{-\frac{3}{2}}\ ds
\leq C\frac{r+1}{r}\frac{1}{\tilde{\rho}}.
\end{align}
Combining \eqref{2.22}, \eqref{(tilde{x}+y_{1, 2k})'^{2}}, and \eqref{|y_{1, 2k+1}+y_{1, 2k}|'}, we deduce
\begin{equation}\label{eta 22}
\eta_{2, 2}\leq C\sum\limits_{k=N(\delta)}^{N_{1}(\delta)-1}q_{1, 2k}\delta\left(\tilde{\rho}^{2}+\frac{r\delta}{r+1}\right)^{-3/2}
\leq C\frac{r+1}{r}\left(\Big(\frac{r\delta}{r+1}\Big)^{1/2}+\tilde{\rho}\right)^{-1}.
\end{equation}
By using \eqref{3.12} and the same argument that led to \eqref{eta 21} and \eqref{eta 22}, we get $\eta_{2, 3}$ and $\eta_{2, 4}$ bounded by the right-hand side of \eqref{eta 22}. Thus,
\begin{equation}\label{eta_{2}}
|\eta_{2}|\leq C\frac{r+1}{r}\left(\Big(\frac{r\delta}{r+1}\Big)^{1/2}+\tilde{\rho}\right)^{-1}.
\end{equation}
Coming back to \eqref{def S22}, we get \eqref{S_{2, 2}} by using \eqref{eta_{1}} and \eqref{eta_{2}}.

\noindent{\bf Step 4. Proof of \eqref{widetilde{S}_{2, 1}}.}
For $N_{0}(\delta)\leq k\leq N_{1}(\delta)$, let
\begin{equation*}
\gamma_{2k}=\Big|\partial_{\tilde{x}_{1}}f_{1}(\mathscr{P}_{1,2k})(\mathscr{P}_{1, 2k}-\mathscr{P}_{1, 2k+2})+\int_{\mathscr{P}_{1, 2k}}^{\mathscr{P}_{1, 2k+2}}\partial_{\tilde{x}_{1}}f_{1}(t)\ dt\Big|.
\end{equation*}
Define
\begin{equation*}
f(t):=\partial_{\tilde{x}_{1}}f_{1}(t).
\end{equation*}
By the mean value property, there is $t_{1, 2k}\in [\mathscr{P}_{1, 2k+2}, \mathscr{P}_{1, 2k}]$ such that
$$f(\mathscr{P}_{1, 2k})(\mathscr{P}_{1, 2k}-\mathscr{P}_{1, 2k+2})+\int_{\mathscr{P}_{1, 2k}}^{\mathscr{P}_{1, 2k+2}}f(t)\ dt=\frac{1}{2}f'(t_{1, 2k})(\mathscr{P}_{1, 2k}-\mathscr{P}_{1, 2k+2})^{2}.$$
Then, we have
\begin{align*}
\gamma_{2k}
&\leq\frac{1}{2}\left|\partial_{t}\partial_{\tilde{x}_{1}}\left(|\tilde{\textbf{x}}-(t, 0, 0)|^{-1}-|\tilde{\textbf{x}}+(t, 0, 0)|^{-1}\right)\big|_{t=t_{1, 2k}}\right|
\frac{(\mathscr{P}_{1, 2k}-\mathscr{P}_{1, 2k+2})^{2}}{\sqrt{t_{1, 2k}^{2}-\mathscr{P}_{1}^{2}}}\\
&\quad+\frac{1}{2}|f(t_{1,2k})t_{1,2k}|\frac{(\mathscr{P}_{1, 2k}-\mathscr{P}_{1, 2k+2})^{2}}{t_{1, 2k}^{2}-\mathscr{P}_{1}^{2}}\\
&=:\frac{1}{2}(\gamma_{2k, 1}+\gamma_{2k, 2}).
\end{align*}

First, recalling \eqref{|tilde{x}|}, one can show that there is some constant $C$ independent of $k$, such that
$$|\tilde{\textbf{x}}\pm(t_{1, 2k}, 0, 0)|^{2}\geq C(t_{1, 2k}^{2}+\tilde{\rho}^{2}) \quad  \text{for}~x_{1}\in R_{\delta}.$$
We thus have
\begin{equation*}
\gamma_{2k, 1}\leq \frac{C}{\tilde{\rho}^{3}+|t_{1, 2k}|^{3}}
\frac{1}{\sqrt{t_{1, 2k}^{2}-\mathscr{P}_{1}^{2}}}(\mathscr{P}_{1, 2k}-\mathscr{P}_{1, 2k+2})^{2}
\end{equation*}
and
\begin{equation*}
\gamma_{2k, 2}\leq \frac{C}{\tilde{\rho}^{2}+|t_{1, 2k}|^{2}}
|t_{1, 2k}|\left(t_{1, 2k}^{2}-\mathscr{P}_{1}^{2}\right)^{-\frac{3}{2}}(\mathscr{P}_{1, 2k}-\mathscr{P}_{1, 2k+2})^{2}.
\end{equation*}

If $k\leq N(\delta)-1$, then we have
\begin{equation*}
|t_{1, 2k}|\simeq\frac{r}{r+1}\frac{1}{k+1}.
\end{equation*}
In view of \eqref{y_{1, 2k}+y_{1, 2k+1}}, one can see that
\begin{equation*}
|\mathscr{P}_{1, 2k}-\mathscr{P}_{1, 2k+2}|\leq C\frac{r}{r+1}\frac{1}{k^{2}}.
\end{equation*}
By using Lemma \ref{lem3.2} \eqref{lem3.2-3},
\begin{equation*}
\mathscr{P}_{1, 2k+2}-\mathscr{P}_{1}\geq C\frac{r}{r+1}\frac{1}{k+1}.
\end{equation*}
Therefore, we have
\begin{equation*}
\sum_{k=N_{0}(\delta)}^{N(\delta)-1}\gamma_{2k, 1}\leq C\frac{r}{r+1}\sum_{k=N_{0}(\delta)}^{N(\delta)-1}\frac{1}{\tilde{\rho}^{3}(k+1)^{3}+\left(\frac{r}{r+1}\right)^{3}}.
\end{equation*}
By using the same argument that led to \eqref{eta 21'} and \eqref{eta 21}, we have
\begin{equation*}
\sum_{k=N_{0}(\delta)}^{N(\delta)-1}\gamma_{2k, 1}\leq C\left(1+\frac{1}{r}\right)^{3/2}\delta^{-1/2}\quad\mbox{if}~\tilde{\rho}\leq \Big(\frac{r\delta}{r+1}\Big)^{1/2},
\end{equation*}
and
\begin{equation*}
\sum_{k=N_{0}(\delta)}^{N(\delta)-1}\gamma_{2k, 1}
\leq C\frac{r+1}{r}\frac{1}{\tilde{\rho}}\quad\mbox{if}~\tilde{\rho}> \Big(\frac{r\delta}{r+1}\Big)^{1/2}.
\end{equation*}

If $N(\delta)\leq k\leq N_{1}(\delta)-1$, we have from \eqref{2.4} that
\begin{align*}
\mathscr{P}_{1, 2k}-\mathscr{P}_{1, 2k+2}&=\frac{G_{1}A_{1}^{k+1}(1-A_{1}^{-1})}{(G_{1}A_{1}^{k+1}-B_{1})(G_{1}A_{1}^{k}-B_{1})}\\
&\leq C \frac{A_{1}^{k+1}(1-A_{1}^{-1})}{B_{1}(A_{1}^{k+1}-1)(A_{1}^{k}-1)},
\end{align*}
the last inequality holds since $\varepsilon$ is sufficiently small and
$$G_{1}=(1+\delta-\mathscr{P}_{1})^{-1}+B_{1}>B_{1}.$$
A direct calculation gives that
\begin{equation*}
\mathscr{P}_{1, 2k}-\mathscr{P}_{1, 2k+2}\leq C\frac{A_{1}^{-k}(1-A_{1}^{-1})}{B_{1}(1-A_{1}^{-(k+1)})(1-A_{1}^{-k})}
\leq C\delta A_{1}^{-k}.
\end{equation*}
By using Lemma \ref{lem3.2} \eqref{lem3.2-2}, for all $k\geq 0$, we have
\begin{equation*}
\mathscr{P}_{1, 2k}-\mathscr{P}_{1}
\geq 2\Big(\frac{r\delta}{r+1}\Big)^{1/2}A^{-k}_{1}.
\end{equation*}
Since $\mathscr{P}_{1, 2k}\geq\mathscr{P}_{1}$ for all $k\geq 0$, we have
\begin{align*}
\sum_{k=N(\delta)}^{N_{1}(\delta)-1}\gamma_{2k, 1}&\leq \sum_{k=N(\delta)}^{N_{1}(\delta)-1} \frac{C}{\tilde{\rho}^{3}+\left(\frac{r}{r+1}\delta\right)^{3/2}}
\frac{1}{\sqrt{\frac{r}{r+1}\delta A^{-k}_{1}}}\delta^{2}A^{-2k}_{1}\\
&\leq C\left(\frac{r+1}{r}\right)^\frac{1}{2}\delta^{3/2}\left(\tilde{\rho}^{3}+\left(\frac{r\delta}{r+1}\right)^{3/2}\right)^{-1}
\sum_{k=N(\delta)}^{\infty}A^{-\frac{3}{2}k}_{1}\\
&\leq C\delta\left(\tilde{\rho}^{3}+\left(\frac{r\delta}{r+1}\right)^{3/2}\right)^{-1}
\leq C\frac{r+1}{r}\left(\Big(\frac{r\delta}{r+1}\Big)^{1/2}+\tilde{\rho}\right)^{-1}.
\end{align*}
Thus, we have
\begin{equation}\label{gamma 2k 1}
\sum_{k=N_{0}(\delta)}^{N_{1}(\delta)-1}\gamma_{2k, 1}\leq C\frac{r+1}{r}\left(\Big(\frac{r\delta}{r+1}\Big)^{1/2}+\tilde{\rho}\right)^{-1}.
\end{equation}
Similarly, one can show that $\sum_{k=N_{0}(\delta)}^{N_{1}(\delta)-1}\gamma_{2k, 2}$ is also bounded by the right-hand side of \eqref{gamma 2k 1}. This completes the proof of \eqref{widetilde{S}_{2, 1}}.

\section{Proof of Proposition \ref{prop u}}\label{sec pf prop u}

It has been proved in \cite{Y1} and \cite{Y2} that
\begin{equation}\label{u_{1}-u_{2}}
u|_{\partial \mathfrak{B}_{1}}-u|_{\partial \mathfrak{B}_{2}}=\int_{\partial \mathfrak{B}_{1}}H\frac{\partial h}{\partial \nu^{1}}\ d\sigma+\int_{\partial \mathfrak{B}_{2}}H\frac{\partial h}{\partial \nu^{2}}\ d\sigma.
\end{equation}
Since $h$ is a constant on $\partial \mathfrak{B}_{1}$ and $\partial \mathfrak{B}_{2}$, one can see from \eqref{u_{1}-u_{2}}, \eqref{def h}, and Green's representation formula that
\begin{align}\label{u boundary}
&u|_{\partial \mathfrak{B}_{1}}-u|_{\partial \mathfrak{B}_{2}}\nonumber\\
&=\frac{Q_{2}}{M}\sum_{j=0}^{\infty}(-1)^{j}q_{1, j}\int_{\partial (\mathfrak{B}_{1}\cup \mathfrak{B}_{2})}\Gamma(\textbf{x}-\textbf{p}_{1, j})\frac{\partial H}{\partial \nu}-H\frac{\partial \Gamma}{\partial \nu}(\textbf{x}-\textbf{p}_{1, j})~d\sigma\nonumber\\
&\quad+\frac{Q_{1}}{M}\sum_{j=0}^{\infty}(-1)^{j+1}q_{2, j}\int_{\partial (\mathfrak{B}_{1}\cup \mathfrak{B}_{2})}\Gamma(\textbf{x}-\textbf{p}_{2, j})\frac{\partial H}{\partial \nu}-H\frac{\partial \Gamma}{\partial \nu}(\textbf{x}-\textbf{p}_{2, j})~d\sigma\nonumber\\
&=\frac{Q_{2}}{M}\sum_{j=0}^{\infty}(-1)^{j}q_{1, j}H(\textbf{p}_{1, j})+\frac{Q_{1}}{M}\sum_{j=0}^{\infty}(-1)^{j+1}q_{2, j}H(\textbf{p}_{2, j}).
\end{align}

Without loss of generality, we may assume that $H(0,0,0)=0$. Then for $k\leq N(\delta)$, by Lemma \ref{lem3.1}, we have
\begin{align*}
&\left|q_{1,2k+1}H(p_{1,2k+1})-\Theta_{1,2k+1}(r)H\left(-\Theta_{1,2k+1}(r)r_{1}, 0, 0\right)\right|\\
&\leq\left|q_{1,2k+1}-\Theta_{1,2k+1}(r)\right||H(\textbf{p}_{1,2k+1})-H(\textbf{0})|\\
&\quad+\Theta_{1,2k+1}(r)\left|H(\textbf{p}_{1,2k+1})-H\left(-\Theta_{1,2k+1}(r)r_{1}, 0, 0\right)\right|\\
&\leq C\Big(\frac{r+1}{r}\delta\Big)^{1/2}\left(\Theta_{1,2k+1}(r)+\sqrt{\delta}\right)r_{1}
+C\sqrt{\delta}\Theta_{1,2k+1}(r)r_{1},
\end{align*}
and
\begin{align*}
&\left|q_{1,2k}H(p_{1,2k})-\Theta_{1,2k}(r)H\left(\Theta_{1,2k}(r)r_{1}, 0, 0\right)\right|\\
&\leq C\Big(\frac{r+1}{r}\delta\Big)^{1/2}\left(\Theta_{1,2k}(r)+\sqrt{\delta}\right)r_{1}
+C\sqrt{\delta}\Theta_{1,2k}(r)r_{1}.
\end{align*}
We thus have
\begin{align*}
&\left|\sum_{k=0}^{N(\delta)-1}\left(q_{1,2k}H(p_{1,2k})-q_{1,2k+1}H(p_{1,2k+1})\right)\right.\\
&\left.-\sum_{k=0}^{N(\delta)-1}\left(\Theta_{1,2k}(r)H\Big(\Theta_{1,2k}(r)r_{1}, 0, 0, 0\Big)-\Theta_{1,2k+1}(r)H\Big(-\Theta_{1,2k+1}(r)r_{1}, 0, 0\Big)\right)\right|\\
&\leq C\sqrt{\delta}|\log\delta|r_{1},
\end{align*}
where $C>0$ is independent of $\varepsilon$.

On the other hand, by Lemma \ref{lem3.1}, Lemma \ref{lem3.2} \eqref{lem3.2-1} and $H(\textbf{0})=0$, we have
\begin{equation*}
\left|\sum_{k=N(\delta)}^{{\infty}}q_{1,2k+1}H(\textbf{p}_{1,2k+1})\right|
\leq C\Big(\frac{r\delta}{r+1}\Big)^{1/2}r_{1},
\end{equation*}
and
\begin{equation*}
\left|\sum_{k=N(\delta)}^{{\infty}}q_{1,2k}H(\textbf{p}_{1,2k})\right|
\leq C\Big(\frac{r\delta}{r+1}\Big)^{1/2}r_{1}.
\end{equation*}
Moreover,
\begin{equation*}
\left|\sum_{k=N(\delta)}^{{\infty}}\Theta_{1,2k+1}(r)H\left(-\Theta_{1,2k+1}(r)r_{1}, 0, 0\right)\right|\leq C\sqrt{\delta}r_{1},
\end{equation*}
and similarly,
\begin{equation*}
\left|\sum_{k=N(\delta)}^{{\infty}}\Theta_{1,2k}(r)H\left(\Theta_{1,2k}(r)r_{1}, 0, 0\right)\right|\leq C\sqrt{\delta}r_{1}.
\end{equation*}
Combining the above estimates, we obtain
\begin{align*}
&\Bigg|\sum_{j=0}^{\infty}(-1)^{j}q_{1, j}H(\textbf{\mbox{p}}_{1, j})-\sum_{k=0}^{\infty}\Theta_{1,2k}(r)H\left(\Theta_{1,2k}(r)r_{1}, 0, 0\right)\\
&\quad+\sum_{k=0}^{\infty}\Theta_{1,2k+1}(r)H\left(-\Theta_{1,2k+1}(r)r_{1}, 0, 0\right)\Bigg|\leq C\sqrt{\delta}|\log\delta|r_{1}.
\end{align*}
By the similar way, we have
\begin{align*}
&\Bigg|\sum_{j=0}^{\infty}(-1)^{j+1}q_{2, j}H(\textbf{\mbox{p}}_{2, j})-\sum_{k=0}^{\infty}\Theta_{2,2k+1}(r)H\left(\Theta_{2,2k+1}(r)r_{2}, 0, 0\right)\\
&\quad+\sum_{k=0}^{\infty}\Theta_{2,2k}(r)H\left(-\Theta_{2,2k}(r)r_{2}, 0, 0\right)\leq C\sqrt{\delta}|\log\delta|r_{2},
\end{align*}
where
\begin{equation*}
\Theta_{2,2k}(r)=\frac{1}{k(1+r)+1},\quad\Theta_{2,2k+1}(r)=\frac{1}{(k+1)(1+r)}.
\end{equation*}
Therefore, coming back to \eqref{u boundary} and recalling the definitions of $C_{\min}^{H}$ and $C_{\max}^{H}$, \eqref{CH min} and \eqref{CH max}, we obtain from Proposition \ref{prop Q} that
\begin{equation}\label{u bound diff}
u|_{\partial \mathfrak{B}_{1}}-u|_{\partial \mathfrak{B}_{2}}=\frac{2\Psi(r_{1}, r_{2})+O\big(\sqrt{\varepsilon}\big|\log\varepsilon\big|\big)}{|\log\varepsilon|}\left(1+\frac{r_{1}+r_{2}}{r_{2}}O(|\log\varepsilon|^{-1})\right),
\end{equation}
where $\Psi(r_{1}, r_{2})$ is defined by \eqref{def psi}. Proposition \ref{prop u} is thus proved.

\begin{remark}
When $H(\textbf{x})=E_{0}(\textbf{x}\cdot\textbf{n})=E_{0}x_{1}$ for some constant $E_{0}$, combining \eqref{u bound diff} and \eqref{def psi}, we can get
\begin{align}\label{difference 1}
&\frac{u|_{\partial \mathfrak{B}_{1}}-u|_{\partial \mathfrak{B}_{2}}}{\varepsilon}\nonumber\\
&\approx\frac{\pi^{2}E_{0}}{3\varepsilon|\log\varepsilon|}\frac{r_{1}r_{2}}{r_{1}+r_{2}}\left(1+\frac{6}{\pi^{2}}\frac{\psi\Big(\frac{r_{1}}{r_{1}+r_{2}}\Big)\psi'\Big(\frac{r_{2}}{r_{1}+r_{2}}\Big)
+\psi\Big(\frac{r_{2}}{r_{1}+r_{2}}\Big)\psi'\Big(\frac{r_{1}}{r_{1}+r_{2}}\Big)}
{\psi\Big(\frac{r_{1}}{r_{1}+r_{2}}\Big)+\psi\Big(\frac{r_{2}}{r_{1}+r_{2}}\Big)}\right).
\end{align}
This is much more concise than the formula (57) in \cite{L}. In fact, the author \cite{L} proved that the average field which is the potential difference divided by the distance $\varepsilon$ between two spheres with different radii, is given by
\begin{align}\label{difference 2}
&\frac{u|_{\partial \mathfrak{B}_{1}}-u|_{\partial \mathfrak{B}_{2}}}{\varepsilon}\nonumber\\
&\approx\frac{\pi^{2}E_{0}}{6\varepsilon}\frac{r_{1}r_{2}}{r_{1}+r_{2}}\Bigg(\psi\Big(\frac{r_{1}}{r_{1}+r_{2}}\Big)+\psi\Big(\frac{r_{2}}{r_{1}+r_{2}}\Big)+\frac{6}{\pi^{2}}\Big(\psi\Big(\frac{r_{1}}{r_{1}+r_{2}}\Big)\psi'\Big(\frac{r_{2}}{r_{1}+r_{2}}\Big)\nonumber\\
&\quad+\psi\Big(\frac{r_{2}}{r_{1}+r_{2}}\Big)\psi'\Big(\frac{r_{1}}{r_{1}+r_{2}}\Big)\Big)\Bigg)\Bigg(\left(\frac{1}{2}\log\frac{2r_{1}r_{2}}{(r_{1}+r_{2})\varepsilon}+\gamma\right)\nonumber\\
&\quad\cdot\left(\psi\Big(\frac{r_{1}}{r_{1}+r_{2}}\Big)+\psi\Big(\frac{r_{2}}{r_{1}+r_{2}}\Big)\right)-\psi\Big(\frac{r_{1}}{r_{1}+r_{2}}\Big)\psi\Big(\frac{r_{2}}{r_{1}+r_{2}}\Big)\Bigg)^{-1},
\end{align}
where $\gamma$ is Euler-Mascheroni constant. For given $r_{1},r_{2}\gg\varepsilon$, formulae \eqref{difference 1} and \eqref{difference 2} coincide up to $|\varepsilon\log\varepsilon|^{-1}$ for small enough $\varepsilon>0$.
\end{remark}

\section{Appendix}\label{append}

For the completeness of this paper, we give the proof of Lemma \ref{lem finer p q} by adapting the idea in \cite{KLY2}.

\begin{proof}[\bf Proof of Lemma \ref{lem finer p q}.]
\noindent{\bf STEP 1. Proof of \eqref{-y_{1, 2k}+y_{1, 2k+2}}.}

\noindent{\bf STEP 1.1.}
If $k>l$, then it follows from \eqref{q_{1}}, \eqref{mu 1 2k}, and \eqref{mu 1 2k+1} that
\begin{equation*}
\log q_{1, 2k}=-\sum_{m=l}^{k-1}\left(\log (1+\delta-\mathscr{P}_{1, 2m+1})+\log \left(\frac{r+\delta+\mathscr{P}_{1, 2m}}{r}\right)
\right)+\log q_{1, 2l}.
\end{equation*}
By using the inequality $|\log(1+t)-t|\leq Ct^{2}$, we obtain
\begin{equation}\label{eq log q12k}
\log q_{1, 2k}=-\sum_{m=l}^{k-1}\left(\frac{\delta+\mathscr{P}_{1, 2m}}{r}+\delta-\mathscr{P}_{1, 2m+1}\right)+\log q_{1, 2l}+E_{1},
\end{equation}
where the error term $E_{1}$ satisfies
\begin{align*}
|E_{1}|&\leq C\sum_{m=l}^{k-1}\left(r^{-2}\Big(\delta+\mathscr{P}_{1, 2m}\Big)^{2}+\Big(\delta-\mathscr{P}_{1, 2m+1}\Big)^{2}\right)\\
&\leq C\sum_{m=l}^{k-1}\left(r^{-2}\mathscr{P}_{1, 2m}^{2}+\mathscr{P}_{1, 2m+1}^{2}\right).
\end{align*}
The last inequality above holds since $\varepsilon$ is sufficiently small. We have from \eqref{eq log q12k}, \eqref{2.4}, and \eqref{2.5} that
\begin{equation*}
\log \frac{q_{1, 2k}}{q_{1, 2l}}=-(k-l)\left(\frac{r+1}{r}\delta+\frac{1}{r}\mathscr{P}_{1}-\mathscr{P}_{2}\right)-\frac{1}{r}\sum_{m=l}^{k-1}f_{1}(m)
+\sum_{m=l}^{k-1}f_{2}(m)+E_{1},
\end{equation*}
where
\begin{align*}
&f_{i}(m)=\frac{A_{i}^{-m}}{G_{i}-B_{i}A_{i}^{-m}},~i=1,2,\\
&G_{1}=(1+\delta-\mathscr{P}_{1})^{-1}+B_{1},\quad
G_{2}=\Big(-\frac{r}{r+1}+O(\delta)-\mathscr{P}_{2}\Big)^{-1}+B_{2}.
\end{align*}
Since $f_{1}(m)$ is decreasing in $m$ and $f_{2}(m)$ is increasing in $m$, we have
\begin{equation*}
\left|\sum_{m=l}^{k-1}f_{1}(m)
+\frac{1}{B_{1}\log A_{1}}\log f_{1}^{k,l}\right|
=\left|\sum_{m=l}^{k-1}f_{1}(m)
-\int_{l}^{k}f_{1}(x)\ dx\right|\leq f_{1}(l),
\end{equation*}
and
\begin{equation*}
\left|\sum_{m=l}^{k-1}f_{2}(m)+\frac{1}{B_{2}\log A_{2}}\log f_{2}^{k,l}\right|
\leq -f_{2}(l),
\end{equation*}
where
\begin{equation}\label{F_{j}}
f_{i}^{k,l}=\frac{G_{i}-B_{i}A_{i}^{-l}}
{G_{i}-B_{i}A_{i}^{-k}},
\quad i=1, 2.
\end{equation}
Thus, we have
\begin{align*}
\log \frac{q_{1, 2k}}{q_{1, 2l}}&=-(k-l)\left(\frac{r+1}{r}\delta+\frac{1}{r}\mathscr{P}_{1}-\mathscr{P}_{2}\right)\\
&\quad+\frac{1}{r}\frac{1}{B_{1}\log A_{1}}\log f_{1}^{k,l}-\frac{1}{B_{2}\log A_{2}}\log f_{2}^{k,l}+E_{2},
\end{align*}
where the new error term $E_{2}$ satisfies
\begin{equation}\label{E_{2}}
|E_{2}|\leq C\sum_{m=l}^{k-1}\left(\frac{1}{r^{2}}\mathscr{P}_{1, 2m}^{2}+\mathscr{P}_{1, 2m+1}^{2}\right)-f_{2}(l)+\frac{1}{r}f_{1}(l).
\end{equation}
One can see from \eqref{A_{1}} and \eqref{B_{1}} that
\begin{equation*}
\frac{1}{B_{1}\log A_{1}}=\frac{r}{r+1}+E_{3,1}, \qquad \frac{1}{B_{2}\log A_{2}}=-\frac{r}{r+1}+E_{3,2},
\end{equation*}
where
\begin{equation}\label{E_{3}}
|E_{3,1}|\leq C \sqrt{\delta}, \quad |E_{3,2}|\leq C \sqrt{\delta}.
\end{equation}
Then, we have
\begin{align*}
\log \frac{q_{1, 2k}}{q_{1, 2l}}&=-(k-l)\left(\frac{r+1}{r}\delta+\frac{1}{r}\mathscr{P}_{1}-\mathscr{P}_{2}\right)\\
&\quad+\frac{1}{r}\Big(\frac{r}{r+1}+E_{3,1}\Big)\log f_{1}^{k,l}+\Big(\frac{r}{r+1}+E_{3,2}\Big)\log f_{2}^{k,l}+E_{2},
\end{align*}
which in turn implies
\begin{equation}\label{q_{1, 2k}q_{1, 2m}}
q_{1, 2k}=q_{1, 2l}\exp\left(k\Big(\mathscr{P}_{2}-\frac{1}{r}\mathscr{P}_{1}\Big)\right){(f_{1}^{k,l})}^{\frac{1}{r+1}}{(f_{2}^{k,l})}^{\frac{r}{r+1}}\exp({E_{4}}),
\end{equation}
where
\begin{equation}\label{E_{4}}
E_{4}:=-\frac{r+1}{r}(k-l)\delta-l\left(\mathscr{P}_{2}-\frac{1}{r}\mathscr{P}_{1}\right)
+E_{2}+\frac{1}{r}E_{3,1}\log f_{1}^{k,l}+E_{3,2}\log f_{2}^{k,l}.
\end{equation}

By \eqref{2.4} and \eqref{2.5}, we have
\begin{equation*}
\mathscr{P}_{1, 2k}-\mathscr{P}_{1, 2k+2}=\left(\mathscr{P}_{1, 2k+2}-\mathscr{P}_{1}\right)\frac{(A_{1}-1)G_{1}}{G_{1}-B_{1}A_{1}^{-k}},
\end{equation*}
\begin{equation*}
\mathscr{P}_{1, 2k+3}-\mathscr{P}_{1, 2k+1}=\left(\mathscr{P}_{2}-\mathscr{P}_{1, 2k+3}\right)\frac{(A_{2}-1)G_{2}}{G_{2}-B_{2}A_{2}^{-k}}.
\end{equation*}
Therefore, we obtain
\begin{align}\label{q_{1, 2k}'}
&q_{1, 2k}\Big(\mathscr{P}_{1, 2k}-\mathscr{P}_{1, 2k+2}\Big)^{-\frac{1}{r+1}}\Big(\mathscr{P}_{1, 2k+3}-\mathscr{P}_{1, 2k+1}\Big)^{-\frac{r}{r+1}}\nonumber\\
&=q_{1, 2l}\exp\left(k\Big(\mathscr{P}_{2}-\frac{1}{r}\mathscr{P}_{1}\Big)\right)\left(\mathscr{P}_{1, 2k+2}-\mathscr{P}_{1}\right)^{-\frac{1}{r+1}}\nonumber\\
&\quad\cdot \left(\mathscr{P}_{2}-\mathscr{P}_{1, 2k+3}\right)^{-\frac{r}{r+1}}H_{1}^{\frac{1}{r+1}}H_{2}^{\frac{r}{r+1}}~\exp({E_{4}}),
\end{align}
where $$H_{i}=\frac{G_{i}-B_{i}A_{i}^{-l}}{(A_{i}-1)G_{i}}, \quad\quad i=1, 2.$$

Since
\begin{align*}
\log A_{1}^{k}&=k\log A_{1}=2k\frac{r+1}{r}\mathscr{P}_{1}+k\frac{r+1}{r}O(\delta),\\
\log A_{2}^{k}&=k\log A_{2}=-2k\frac{r+1}{r}\mathscr{P}_{2}+k\frac{r+1}{r}O(\delta),
\end{align*}
it follows that
\begin{equation}\label{k P1 2}
-kr^{-1}\mathscr{P}_{1}=-\frac{1}{2(r+1)}\log A_{1}^{k}-E_{5,1}, \quad
k\mathscr{P}_{2}=-\frac{r}{2(r+1)}\log A_{2}^{k}-E_{5,2},
\end{equation}
where $$|E_{5,1}|\leq \frac{C}{r}k\delta, \quad |E_{5,2}|\leq C k\delta.$$
We then obtain from \eqref{q_{1, 2k}'} and \eqref{k P1 2} that
\begin{align}\label{q_{1, 2k}''}
&q_{1, 2k}\Big(\mathscr{P}_{1, 2k}-\mathscr{P}_{1, 2k+2}\Big)^{-\frac{1}{r+1}}\Big(\mathscr{P}_{1, 2k+3}-\mathscr{P}_{1, 2k+1}\Big)^{-\frac{r}{r+1}}\nonumber\\
&=q_{1, 2l}\left(\mathscr{P}_{1, 2k}^{2}-\mathscr{P}_{1}^{2}\right)^{-\frac{1}{2(r+1)}}\left(\mathscr{P}_{1, 2k+1}^{2}-\mathscr{P}_{2}^{2}\right)^{-\frac{r}{2(r+1)}}E_{6},
\end{align}
where
$$E_{6}=E_{6,1}E_{6,2}E_{6,3}\exp({E_{4}-E_{5,1}-E_{5,2}}),$$
\begin{align*}
E_{6,1}&:=\left(\frac{\mathscr{P}_{1, 2k}+\mathscr{P}_{1}}{(\mathscr{P}_{1, 2k}-\mathscr{P}_{1})A_{1}^{k}}\right)^{\frac{1}{2(r+1)}}
\left(\frac{-\mathscr{P}_{1, 2k+1}-\mathscr{P}_{2}}{(\mathscr{P}_{2}-\mathscr{P}_{1, 2k+1})A_{2}^{k}}\right)^{\frac{r}{2(r+1)}},\\
E_{6,2}&:=\left(\frac{\mathscr{P}_{1, 2k}-\mathscr{P}_{1}}{\mathscr{P}_{1, 2k+2}-\mathscr{P}_{1}}\right)^{\frac{1}{r+1}}
\left(\frac{\mathscr{P}_{2}-\mathscr{P}_{1, 2k+1}}{\mathscr{P}_{2}-\mathscr{P}_{1, 2k+3}}\right)^{\frac{r}{r+1}},~E_{6,3}:=H_{1}^{\frac{1}{r+1}}H_{2}^{\frac{r}{r+1}}.
\end{align*}

Suppose now that $l=N_{0}(\delta)-1$, $l<k\leq N_{1}(\delta)$, then we have
$$|E_{5,1}|+|E_{5,2}|=\frac{1}{r}O(|\log \delta|^{-1}).$$
We will show that
\begin{equation}\label{4}
E_{6,1}=1+\Big(\frac{r+1}{r}\Big)^{1/2}O(\sqrt{\delta})
\end{equation}
\begin{equation}\label{1}
E_{6,2}=1+O(|\log \delta|^{-1}),
\end{equation}
\begin{equation}\label{2}
q_{1, 2l}E_{6,3}=\frac{r}{r+1}+\frac{r}{r+1}O(|\log \delta|^{-1}),
\end{equation}
and
\begin{equation}\label{3}
|E_{4}|\leq C\left(\frac{r+1}{r}\right)^{2}|\log\delta|^{-1}.
\end{equation}
Once we have these estimates, then \eqref{-y_{1, 2k}+y_{1, 2k+2}} results from \eqref{q_{1, 2k}''}. In the rest, we prove \eqref{4}--\eqref{3}, one by one.

\noindent{\bf Step 1.2. Proofs of \eqref{4}--\eqref{3}.}
To prove \eqref{4}, we obtain from \eqref{p_{1}}, \eqref{2.4}, and \eqref{B_{1}} that
\begin{equation*}
\frac{\mathscr{P}_{1}+\mathscr{P}_{1, 2k}}{\mathscr{P}_{1, 2k}-\mathscr{P}_{1}}A_{1}^{-k}=1+\Big(\frac{r+1}{r}\Big)^{1/2}O(\sqrt{\delta}).
\end{equation*}
Similarly,
$$\frac{\mathscr{P}_{1, 2k+1}+\mathscr{P}_{2}}{(\mathscr{P}_{1, 2k+1}-\mathscr{P}_{2})A_{2}^{k}}
=1+\Big(\frac{r+1}{r}\Big)^{1/2}O(\sqrt{\delta}).$$
\eqref{4} is proved.

To prove \eqref{1}, we first observe that
\begin{equation*}
\frac{\mathscr{P}_{1, 2k}-\mathscr{P}_{1}}{\mathscr{P}_{1, 2k+2}-\mathscr{P}_{1}}=\frac{G_{1}A_{1}^{k+1}-B_{1}}{G_{1}A_{1}^{k}-B_{1}}=A_{1}+\frac{(A_{1}-1)B_{1}}{(A_{1}^{k}-1)B_{1}+\frac{1}{1+\delta-\mathscr{P}_{1}}A_{1}^{k}}.
\end{equation*}
Since $A_{1}>1$, $k\geq |\log\delta|$, and $A_{1}=1+\Big(\frac{r+1}{r}\Big)^{1/2}O(\sqrt{\delta})$, we have
\begin{equation*}
\frac{(A_{1}-1)B_{1}}{(A_{1}^{k}-1)B_{1}+\frac{1}{1+\delta-\mathscr{P}_{1}}A_{1}^{k}}
\leq \frac{A_{1}-1}{A_{1}^{k}-1}=\frac{1}{1+A_{1}+...+A_{1}^{k-1}}\leq\frac{1}{k}=O(|\log \delta|^{-1}).
\end{equation*}
It follows that
\begin{equation*}
\frac{\mathscr{P}_{1, 2k}-\mathscr{P}_{1}}{\mathscr{P}_{1, 2k+2}-\mathscr{P}_{1}}=1+O(|\log \delta|^{-1}).
\end{equation*}
By the similar way,
\begin{equation*}
\frac{\mathscr{P}_{2}-\mathscr{P}_{1, 2k+1}}{\mathscr{P}_{2}-\mathscr{P}_{1, 2k+3}}=1+O(|\log \delta|^{-1}).
\end{equation*}
\eqref{1} is proved.

To prove \eqref{2}, we need to use the following inequality
\begin{equation}\label{ms}
ls-\frac{1}{2}(l-1)ls^{2}\leq 1-(1-s)^{l}\leq ls,\quad\forall~s\in[0, 1].
\end{equation}
Recalling that $l=O(|\log\delta|)$, $\mathscr{P}_{1}=\Big(\frac{r}{r+1}\Big)^{1/2}O(\sqrt{\delta})$, and
$$A_{1}^{-1}=1-2\frac{1+r}{r}\mathscr{P}_{1}+\frac{1+r}{r}O(\delta).$$
Taking $s=2\frac{1+r}{r}\mathscr{P}_{1}+\frac{1+r}{r}O(\delta)$ in \eqref{ms}, we have
\begin{equation}\label{A_{1}^{-l}}
1-A_{1}^{-l}=\frac{1+r}{r}\left(2l\mathscr{P}_{1}+O(|\log\delta|^{2}\delta)\right).
\end{equation}
By using \eqref{q_{1,2k}} with $l=N_{0}(\delta)-1$, we have
$$q_{1, 2l}=\frac{r}{l(r+1)+r}+O(\sqrt{\delta}).$$
We thus have
\begin{align*}
q_{1, 2l}H_{1}&=q_{1, 2l}
\left(\Big(\frac{r}{r+1}\Big)^{1/2}O(\sqrt{\delta})+\Big(1-\Big(\frac{r}{r+1}\Big)^{1/2}O(\sqrt{\delta})\Big)(1-A_{1}^{-l})\right)
\frac{1}{A_{1}-1}\\
&=\frac{r}{r+1}+\frac{r}{r+1}O(|\log \delta|^{-1}),
\end{align*}
here, we used
$$\frac{1}{A_{1}-1}=\frac{r}{r+1}\frac{1}{2\mathscr{P}_{1}}+O(1).$$
Similarly,
$$q_{1, 2l}H_{2}=\frac{r}{r+1}+\frac{r}{r+1}O(|\log \delta|^{-1}).$$
\eqref{2} is proved.

To prove \eqref{3}, we first estimate $E_{2}$. Since $l=O(|\log\delta|)$, we have from \eqref{y_{1,2k}} that
\begin{align*}
\sum_{m=l}^{k-1}\mathscr{P}_{1, 2m}^{2}\leq\sum_{m=l}^{N(\delta)-1}\mathscr{P}_{1, 2m}^{2}+\sum_{m=N(\delta)}^{N_{1}(\delta)-1}\mathscr{P}_{1, 2m}^{2}
\leq& C\left(\frac{1}{l-1}+N_{1}(\delta)\mathscr{P}_{1, 2N(\delta)}^{2}\right)\\
&=O(|\log\delta|^{-1}).
\end{align*}
On the other hand, it follows from \eqref{A_{1}^{-l}} that
\begin{equation}\label{formula A1}
1-A_{1}^{-l}=\Big(\frac{1+r}{r}\Big)^{1/2}O\Big(|\log\delta|\sqrt{\delta}\Big),
\end{equation}
and
\begin{equation*}
\frac{A_{1}^{-l}}{G_{1}-B_{1}A_{1}^{-l}}=\frac{1+\Big(\frac{1+r}{r}\Big)^{1/2}O\Big(|\log\delta|\sqrt{\delta}\Big)}
{G_{1}-B_{1}+B_{1}\Big(\frac{1+r}{r}\Big)^{1/2}O\Big(|\log\delta|\sqrt{\delta}\Big)}
=\frac{r}{r+1}O(|\log\delta|^{-1}).
\end{equation*}
Similarly, we have
\begin{equation*}
\sum_{m=l}^{k-1}\mathscr{P}_{1, 2m+1}^{2}= O(|\log\delta|^{-1}),\quad \frac{A_{2}^{-l}}{G_{2}-B_{2}A_{2}^{-l}}=\frac{r}{r+1}O(|\log\delta|^{-1}).
\end{equation*}
Thus, we infer from \eqref{E_{2}} that
\begin{equation}\label{E_{2}'}
|E_{2}|=C \left(\frac{r+1}{r}\right)^{2}O\Big(|\log\delta|^{-1}\Big).
\end{equation}
By using \eqref{F_{j}}, \eqref{formula A1}, and the fact that $A_{1}>1$, we obtain
\begin{align*}
1\geq f_{1}^{k,l}&\geq\left(\frac{1}{1+\delta-\mathscr{P}_{1}}+B_{1}(1-A_{1}^{-l})\right)\left(\frac{1}{1+\delta-\mathscr{P}_{1}}+B_{1}\right)^{-1}\\
&\geq C\sqrt{\delta}|\log\delta|.
\end{align*}
We then infer from \eqref{E_{3}} that
\begin{equation}\label{E_{3}log F_{1}}
|E_{3,1}\log f_{1}^{k,l}|\leq C\sqrt{\delta}|\log\delta|.
\end{equation}
Similarly,
\begin{align}\label{E_{3}'log F_{2}}
|E_{3,2}\log f_{2}^{k,l}|\leq C\sqrt{\delta}|\log\delta|.
\end{align}

It follows from \eqref{E_{4}}, \eqref{k P1 2}, \eqref{E_{2}'}--\eqref{E_{3}'log F_{2}} that
\begin{align*}
|E_{4}|&\leq \frac{r+1}{r}(k-l)\delta-l\left(\mathscr{P}_{2}-\frac{1}{r}\mathscr{P}_{1}\right)
+|E_{2}|+\frac{1}{r}|E_{3,1}\log f_{1}^{k,l}|+|E_{3,2}\log f_{2}^{k,l}|\\
&\leq C\left(\frac{r+1}{r}\right)^{2}|\log\delta|^{-1},
\end{align*}
and thus \eqref{3} is proved.

\noindent{\bf Step 2. Proof of \eqref{q_{1, 2k}leq e}.} In view of \eqref{2.38}, we have
\begin{equation}\label{est q12k}
q_{1, 2k}\leq q_{1, 2N_{1}(\delta)}\left(\frac{r}{r+\delta+\mathscr{P}_{1}}\right)^{k-N_{1}(\delta)}
\left(\frac{1}{1+\delta-\mathscr{P}_{2}}\right)^{k-N_{1}(\delta)},\quad\forall~ k\geq N_{1}(\delta).
\end{equation}
By using \eqref{q_{1, 2k}q_{1, 2m}}, \eqref{q_{1,2k}}, \eqref{3}, $l=N_{0}(\delta)-1$, and the fact that $\varepsilon$ is sufficiently small, we have
\begin{align*}
&q_{1, 2N_{1}(\delta)}\\
&=q_{1, 2l}\exp\left(N_{1}(\delta)\Big(\mathscr{P}_{2}-\frac{1}{r}\mathscr{P}_{1}\Big)\right){(f_{1}^{N_{1}(\delta),l})}^{\frac{1}{r+1}}{(f_{2}^{N_{1}(\delta),l})}^{\frac{r}{r+1}}\exp(E_{4})\\
&\leq C\Big(\frac{r}{r+1}\frac{1}{|\log\delta|}+O(\sqrt{\delta})\Big)\\
&\quad\cdot\exp\left(N_{1}(\delta)\Big(\mathscr{P}_{2}-\frac{1}{r}\mathscr{P}_{1}\Big)\right){(f_{1}^{N_{1}(\delta),l})}^{\frac{1}{r+1}}{(f_{2}^{N_{1}(\delta),l})}^{\frac{r}{r+1}}\exp(E_{4})\\
&\leq C_{1}\exp\left(-C_{2}\big(\frac{r}{r+1}\big)^{1/2}\frac{1}{\sqrt{\delta}|\log\delta|}\right)
\end{align*}
for some constants $C_{1}$ and $C_{2}$. Coming back to \eqref{est q12k}, we get \eqref{q_{1, 2k}leq e}.
\end{proof}


\end{document}